\newtheorem{thm}{Theorem}[section]
\newtheorem{lem}[thm]{Lemma}
\newtheorem{prop}[thm]{Proposition}
\newtheorem{ques}[thm]{Question}
\newtheorem{cor}[thm]{Corollary}
\theoremstyle{definition}
\newtheorem{de}[thm]{Definition}
\theoremstyle{remark}
\newtheorem{rem}[thm]{Remark}
\numberwithin{equation}{section}
\begin{document}

\title{Measure complexity and M\"{o}bius disjointness}
\author{Wen Huang}
\author{Zhiren Wang}
\author{Xiangdong Ye}

\address[W.~Huang]{Wu Wen-Tsun Key Laboratory of Mathematics, USTC, Chinese Academy of Sciences,
Department of Mathematics, University of Science and Technology of China,
Hefei, Anhui, 230026, P.R. China} \email{wenh@mail.ustc.edu.cn}

\address[Z.~Wang]{Department of Mathematics, Pennsylvania State University, University Park,
PA 16802, USA}
\email{zhirenw@psu.edu}
\address[X.~Ye]{Wu Wen-Tsun Key Laboratory of Mathematics, USTC, Chinese Academy of Sciences,
Department of Mathematics, University of Science and Technology of China,
Hefei, Anhui, 230026, P.R. China} \email{yexd@ustc.edu.cn}
\date{Feb. 1, 2017}

\subjclass[2010]{Primary: 46L55; Secondary 11K31 } \keywords{M\"{o}bius function; Measure complexity; sub-polynomial}


\begin{abstract}
{In this paper, the notion of measure complexity is introduced for a topological dynamical system and it is shown that Sarnak's M\"{o}bius
disjointness conjecture holds for any system for which every invariant Borel probability measure has sub-polynomial measure complexity.

Moreover, it is proved that the following classes of topological dynamical systems $(X,T)$ meet this condition and hence satisfy Sarnak's conjecture:
(1) Each invariant Borel probability measure of $T$ has discrete spectrum.
(2) $T$ is a homotopically trivial $C^\infty$
skew product system on $\mathbb{T}^2$ over an irrational rotation of the circle.
Combining this with the previous results it implies that the M\"{o}bius disjointness conjecture holds for any $C^\infty$ skew product system on $\mathbb{T}^2$.
(3) $T$ is a continuous skew product map of the form $(ag,y+h(g))$  on $G\times \mathbb{T}^1$  over a minimal rotation
of the compact metric abelian group $G$ and $T$  preserves a measurable section.
(4) $T$ is a tame  system.
~}
\end{abstract}

\maketitle

\markboth{Measure complexity and M\"{o}bius disjointness}{}




\section{Introduction}

The M\"{o}bius function $\mu: \mathbb{N}\rightarrow \{-1,0,1\}$ is defined by
$\mu(1)=1$ and
\begin{equation}\label{M-function}
  \mu(n)=\left\{
           \begin{array}{ll}
             (-1)^k & \hbox{if $n$ is a product of $k$ distinct primes;} \\
             0 & \hbox{otherwise.}
           \end{array}
         \right.
\end{equation}

Let $(X,T)$ be a (topological) dynamical system (for short t.d.s.), namely $X$ is a compact metrizable space and $T : X \rightarrow X$
a homeomorphism. We say a sequence $\xi$ is {\em realized} in $(X,T)$ if there is an $f\in C(X)$ and an $x\in X$
such that $\xi(n) = f(T^nx)$ for any $n\in\mathbb{N}$. A sequence $\xi$ is called {\em deterministic} if it is realized in a
system with zero topological entropy. Here is the well-known conjecture by Sarnak \cite{Sar}:

\medskip

\noindent {\bf M\"{o}bius Disjointness Conjecture:}\ {\em
The M\"{o}bius function $\mu$ is linearly disjoint from any deterministic sequence $\xi$. That is,
\begin{equation}\label{Sarnak}
  \lim_{N\rightarrow \infty}\frac{1}{N}\sum_{n=1}^N\mu(n)\xi(n)=0.
\end{equation}
}

It is known that by Green and Tao \cite{GT} that nilsystems satisfy the conjecture.
We refer to \cite{MR,G,B1,B2,BSZ,ALR,MMR,LS,KPL,P,ALR1,V1,FJ,W1,HLSY,AKLR,Wei,HWZ}
for the progress on this conjecture.

By the work of El Abdalaoui, Lema\'{n}cyzk and de la Rue \cite{ALR1},
M\"{o}bius disjointness conjecture holds for any  topological model of an ergodic system with
irrational discrete spectrum (in fact  in \cite{ALR1} M\"{o}bius disjointness conjecture  is proved for any  topological model of
a totally ergodic system with {\it quasi-discrete spectrum}. We note that any ergodic automorphism
with irrational discrete spectrum has quasi-discrete spectrum and totally ergodic).
Strengthening the result, recently it is shown that M\"{o}bius disjointness conjecture holds for any topological
model of an ergodic system with discrete spectrum by Huang, Wang and Zhang \cite{HWZ}. Note that when we speak about the topological model, we mean it is
uniquely ergodic. So a natural question is if the conjecture holds
for a t.d.s. with the property that each invariant measure has discrete spectrum. In the process to study the question,
we not only get an affirmative answer to the question, but also obtain other related results which solve some open questions.

As the main result of the paper we  provide a criterion for a t.d.s. satisfying the required disjointness condition in
Sarnak's M\"{o}bius disjointness conjecture using the notion of the measure complexity we now
introduce. Precisely, let $(X,T)$ be a t.d.s. with a metric $d$ and let ${\mathcal{M}}(X,T)$ be the set of all
$T$-invariant Borel probability measures on $X$. For $\rho\in {\mathcal{M}}(X,T)$ and any $n\in \mathbb{N}$, we consider the metric
$$\overline{d}_n(x,y)=\frac{1}{n}\sum_{i=0}^{n-1}d(T^ix,T^iy)$$
for any $x,y\in X$. For $\epsilon>0$, let
$$S_n(d,\rho,\epsilon)=\min \{ m\in \mathbb{N}:\exists x_1,x_2,\cdots,x_m
\text{ s.t. } \rho\big(\bigcup_{i=1}^m B_{\overline{d}_n}(x_i,\epsilon)\big)>1-\epsilon\},$$
where $B_{\overline{d}_n}(x,\epsilon):=\{y\in X: \overline{d}_n(x,y)<\epsilon\}$
for any $x\in X$.

Let $U(n):\mathbb{N}\rightarrow [1,+\infty)$ be an increasing sequence with $\lim_{n\rightarrow +\infty} U(n)=+\infty$.
Following  the idea of  Ferenczi \cite{Fer}, we say the measure complexity of $(X,d,T,\rho)$ is weaker than $U(n)$,
if $\liminf_{n\rightarrow +\infty} \frac{S_n(d,\rho,\epsilon)}{U(n)}=0$ for any $\epsilon>0$. By Proposition \ref{prop-0},
the measure complexity of $(X,d,T,\rho)$ is weaker than $U(n)$ if and only if the measure complexity of
$(X,d',T,\rho)$ is also weaker than $U(n)$ for any compatible metric $d'$ on $X$.
Thus we can simply say the measure complexity of $(X,T,\rho)$ is weaker than $U(n)$.

We say the measure complexity of $(X,T,\rho)$ is {\it sub-polynomial}, if the measure complexity of $(X,T,\rho)$ is
weaker than $U_\tau(n)=n^\tau$ for any $\tau>0$. Our main result is the following.

\begin{thm}  \label{main-result1}  Let $(X,T)$ be a t.d.s. and the measure complexity of $(X,T,\rho)$  be sub-polynonimal
for any $\rho\in {\mathcal{M}}(X,T)$. Then the  M\"{o}bius disjointness conjecture holds.
\end{thm}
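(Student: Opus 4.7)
The strategy is to verify the hypothesis of the Katai--Bourgain--Sarnak--Ziegler (KBSZ) criterion: if for every $f\in C(X)$, every $x\in X$, and writing $\pi(H)$ for the prime counting function,
$$\lim_{H\to\infty}\limsup_{N\to\infty}\frac{1}{\pi(H)^2}\sum_{\substack{p,q\le H\text{ prime}\\p\ne q}}\left|\frac{1}{N}\sum_{n=1}^{N}f(T^{pn}x)\overline{f(T^{qn}x)}\right|=0,$$
then \eqref{Sarnak} holds for $\xi(n)=f(T^{n}x)$. For fixed distinct primes $p,q$ the inner correlation is a Birkhoff average for the function $F(y,z)=f(y)\overline{f(z)}$ along the orbit of $(x,x)$ under $T^{p}\times T^{q}$; passing to a subsequence, each weak-$*$ limit of the empirical measures $N^{-1}\sum_n\delta_{(T^{pn}x,T^{qn}x)}$ is a $T^p\times T^q$-invariant measure $\sigma^{p,q}$ on $X\times X$ whose marginals project to measures in ${\mathcal{M}}(X,T)$.

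The principal technical step is to transport the sub-polynomial complexity hypothesis from $(X,T,\rho)$ to the joinings $\sigma^{p,q}$. I would establish two stability lemmas: (i) sub-polynomial measure complexity is preserved when $T$ is replaced by $T^k$, because the metrics $\overline{d}_n$ for $T$ and for $T^k$ differ by at most a factor depending on $k$; (ii) it is preserved under products, since a Bowen ball for $T^p\times T^q$ is contained in a product of Bowen balls for $T^p$ and $T^q$ of comparable radius. Combining (i)--(ii) with the fact that the marginals of $\sigma^{p,q}$ are absolutely continuous with respect to measures in ${\mathcal{M}}(X,T)$ shows that $(X\times X,T^p\times T^q,\sigma^{p,q})$ has sub-polynomial measure complexity. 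Concretely, for any $\varepsilon>0$ and any $\tau>0$ there exist arbitrarily large $n$ for which the $\sigma^{p,q}$-essential support can be covered, in the $\overline{d}_n$-metric on $X\times X$, by at most $n^{\tau}$ Bowen balls of radius $\varepsilon$.

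The final step converts this cover into the required cancellation. Since $F$ is uniformly continuous, on each Bowen ball of radius $\varepsilon$ its $\overline{d}_n$-average is within $O(\varepsilon)$ of a constant of the form $\int f\,d\mu_p\cdot\overline{\int f\,d\mu_q}$, where $\mu_p,\mu_q$ are ergodic components of the marginals of $\sigma^{p,q}$. After the standard reduction to the case $\int f\,d\rho=0$ for every $\rho\in{\mathcal{M}}(X,T)$ (achieved fibrewise in the ergodic decomposition), each such product vanishes, and the inner correlation becomes $O(\varepsilon)+o_n(1)$. The main obstacle is the final bookkeeping: one must choose $n=n(H,\varepsilon,\tau)$ consistently across all prime pairs $p,q\le H$ so that the cover count $n^{\tau}$ and the Lipschitz error both vanish after weighting by $\pi(H)^{-2}\asymp(H/\log H)^{-2}$. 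It is precisely the sub-polynomial --- not merely polynomial with small exponent --- nature of the complexity bound that lets these two errors be made simultaneously negligible, thereby verifying the KBSZ hypothesis and establishing Sarnak's disjointness conjecture for $(X,T)$.
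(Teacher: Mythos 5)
Your approach is fundamentally different from the paper's, and unfortunately it cannot work as stated: the Katai--Bourgain--Sarnak--Ziegler hypothesis is genuinely \emph{false} for some systems covered by the theorem. Take $X=\mathbb{Z}/k\mathbb{Z}$ with $T$ the cyclic shift and $f=1_{\{0\}}$. Every invariant measure here has bounded, hence sub-polynomial, measure complexity, yet for any two primes $p,q$ coprime to $k$ one has $\frac{1}{N}\sum_{n\le N}f(T^{pn}0)f(T^{qn}0)\to 1/k$, so the averaged correlation over prime pairs does not tend to $0$ as $H\to\infty$. The same obstruction appears for any system with rational point spectrum (rational rotations, odometers, periodic factors), all of which satisfy the theorem's hypothesis. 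KBSZ is only a sufficient condition, and it is precisely the systems with ``rational'' structure that escape it; this is why the paper states in the introduction that its argument relies \emph{only} on the Matom\"aki--Radziwi\l{}\l{}--Tao short-interval bound and avoids the KBSZ dichotomy used in \cite{LS,W1}.

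There is also a local gap in your argument even where KBSZ could apply: the claim that the $\overline{d}_n$-average of $F(y,z)=f(y)\overline{f(z)}$ over a Bowen ball for $T^p\times T^q$ is within $O(\varepsilon)$ of $\int f\,d\mu_p\cdot\overline{\int f\,d\mu_q}$ is unjustified. That average is again a finite orbit correlation $\frac{1}{n}\sum_i f(T^{pi}y_0)\overline{f(T^{qi}z_0)}$; asserting it factors as a product of integrals amounts to asserting that the joining $\sigma^{p,q}$ is (close to) the product joining, i.e.\ that $T^p$ and $T^q$ are disjoint as measure-preserving systems --- which is the entire difficulty and is false in general. Likewise the ``standard reduction to $\int f\,d\rho=0$ for every $\rho\in\mathcal{M}(X,T)$'' is not available for non-uniquely-ergodic systems. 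The paper's actual route is quite different: passing to a subsequence $N_i$ along which the empirical measures converge to some $\rho$, it covers a set of $\rho$-measure $1-\epsilon_1$ by $m=S_L(d,\rho,\epsilon_1)$ balls in the $\overline{d}_L$ metric, replaces $f(T^{n+\ell}x)$ on each short block $\ell\in[0,L-1]$ by $f(T^\ell x_{j_n})$ for one of the $m$ centers, and then applies the MRT averaged Chowla estimate (Lemma \ref{lem-MRT-2}) to each center; the sub-polynomial bound $m\ll \epsilon^3 L^{\delta/20}/(2D)$ against the power saving $L^{-\delta/20}$ from MRT is exactly what makes the union bound over centers close. If you want to salvage a KBSZ-based argument, you would at minimum have to restrict to systems whose invariant measures have no nontrivial rational eigenvalues, which is strictly weaker than the theorem.
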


In fact Theorem \ref{main-result1} is proved via the following equivalent form.

\medskip
\noindent {\bf Theorem 1.1'.} {\it  Let $(X,T)$ be a t.d.s. with $x\in X$ and $\{ N_1<N_2<N_3<\cdots\} \subseteq \mathbb{N}$ such that
the sequence $\frac{1}{N_i}\sum_{n=1}^{N_i} \delta_{T^nx}$ weakly$^*$ converges to a $\rho\in  {\mathcal{M}}(X,T)$.
Suppose that the measure complexity of $(X,T,\rho)$  is sub-polynonimal. Then
$$\lim_{i\rightarrow +\infty} \frac{1}{N_i} \sum_{n=1}^{N_i} \mu(n) f(T^nx)=0$$
for any $f\in C(X)$.}

\medskip

As applications of Theorem \ref{main-result1}, we consider the following classes of t.d.s.

\subsection{Systems with discrete spectrum}

Let $(X, T )$ be a t.d.s, $\mathcal{B}_X$ be the Borel $\sigma$-algebra of $X$ and $\rho \in {\mathcal{M}}(X,T)$.
An eigenfunction for $T$ is some non-zero
function $f\in L^2(X,\mathcal{B}_X,\rho) = L^2(\rho)$ such that $Uf = f\circ T =\lambda f$ for some $\lambda \in \mathbb{C}$. $\lambda$ is
called the eigenvalue corresponding to $f$. It is easy to see every eigenvalue has norm one,
that is $|\lambda| = 1$. If $f\in L^2(\rho)$ is an eigenfunction, then $\text{cl}\{U^nf : n\in \mathbb{Z}\}$ is a compact
subset of $L^2(\rho)$. Generally, we say $f$ is almost periodic if $\text{cl}\{U^nf : n\in \mathbb{Z}\}$ is compact
in $L^2(\rho)$. It is well known that the set of all bounded almost periodic functions forms a
$U$-invariant and conjugation-invariant subalgebra of $L^2(\rho)$ (denoted by $A_c$). The set of all
almost periodic functions is just the closure of $A_c$ (denoted by $H_c$), and is also spanned by
the set of eigenfunctions. $T$ is said  to have {\it discrete spectrum} if $L^2(\rho)$
is spanned by the set of eigenfunctions, that is $H_c = L^2(X,\mathcal{B}_X,\rho)$.
If $\rho$ has discrete spectrum, then the measure complexity of $(X,T,\rho)$ is sub-polynomial
(see Proposition \ref{prop-1}).
Thus the following result is a direct corollary of Theorem \ref{main-result1} and Proposition \ref{prop-1}.


\begin{thm}  \label{main-result2} Let $(X,T)$ be a t.d.s. such that each $\rho\in {\mathcal{M}}(X,T)$ has discrete spectrum, then
M\"{o}bius disjointness conjecture holds for $(X,T)$.
\end{thm}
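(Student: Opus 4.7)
The plan is to deduce Theorem \ref{main-result2} as a direct corollary of Theorem \ref{main-result1}, by verifying its hypothesis: if every $\rho \in \mathcal{M}(X,T)$ has discrete spectrum, then the measure complexity of $(X,T,\rho)$ is sub-polynomial. So my task reduces to establishing this implication (which is Proposition \ref{prop-1} in the paper's numbering), and then invoking Theorem \ref{main-result1}.

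First I would reduce to the ergodic case. By ergodic decomposition $\rho = \int \rho_x \, d\rho(x)$, almost every component $\rho_x$ inherits discrete spectrum, since the $U$-invariant subalgebra $A_c$ for $\rho$ restricts compatibly to $\rho_x$. Moreover, sub-polynomial complexity is stable under convex combination of finitely many components capturing all but $\epsilon$ of the mass, essentially because one can take the union of the respective $\overline{d}_n$-covers. Hence it suffices to prove sub-polynomial complexity when $\rho$ is ergodic.

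For an ergodic $\rho$ with discrete spectrum, the Halmos--von Neumann theorem provides a measure-theoretic isomorphism of $(X,\mathcal{B}_X,\rho,T)$ with a minimal rotation $(G, m_G, R_a)$ on a compact metric abelian group. Endowing $G$ with a translation-invariant metric $d_G$, the rotation $R_a$ is a $d_G$-isometry, so $\overline{d_G}_n(g,h) = d_G(g,h)$ for every $n$, and consequently $S_n(d_G, m_G, \epsilon)$ is bounded by the $\epsilon$-covering number of $G$, uniformly in $n$. This is a far stronger conclusion than sub-polynomial growth.

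The main obstacle, and the only step requiring genuine care, is transferring this uniform bound back to $(X,T,\rho)$ along the Halmos--von Neumann isomorphism, which is only measurable. I would handle this by a Lusin approximation: using that continuous functions on $X$ can be approximated in $L^2(\rho)$ by finite combinations of eigenfunctions (which are continuous when realized on $G$), one produces for each $\delta>0$ a compact $K\subset X$ with $\rho(K)>1-\delta$ on which the correspondence with $G$ is continuous, hence uniformly continuous. Covering $G$ by finitely many $d_G$-balls of radius tuned to this modulus and picking preimages inside $K$ yields a finite family of $\overline{d}_n$-balls in $X$ whose union has $\rho$-measure at least $1-2\delta$, with cardinality depending only on $\delta$ and $\epsilon$, not on $n$. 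Taking $\delta = \epsilon$ gives $S_n(d,\rho,\epsilon) = O(1)$, which is in particular sub-polynomial, so Theorem \ref{main-result1} applies and yields the Möbius disjointness conclusion.
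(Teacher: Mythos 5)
Your overall strategy---verifying the hypothesis of Theorem \ref{main-result1} by showing that discrete spectrum forces sub-polynomial (indeed bounded) measure complexity---is the right one, and your treatment of the \emph{ergodic} case is sound: Halmos--von Neumann, the observation that for an invariant metric on $G$ the averaged metric $\overline{d}_n$ coincides with $d$, and the Lusin-type transfer along the measurable isomorphism (which is exactly Proposition \ref{prop-0}) together give $S_n=O(1)$; this is essentially how the paper handles the rotations arising in Propositions \ref{lem-M-finite} and in the proof of Theorem \ref{thm-4}. The genuine gap is in your reduction to the ergodic case. The theorem must cover non-ergodic $\rho$ (discrete spectrum is defined in the paper for arbitrary $\rho\in\mathcal{M}(X,T)$, and the limit measure in Theorem 1.1$'$ need not be ergodic), and your reduction rests on the claim that finitely many ergodic components capture all but $\epsilon$ of the mass. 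That is false whenever the ergodic decomposition is non-atomic: already for $T(x,y)=(x,y+\alpha)$ on $\mathbb{T}^2$ with Lebesgue measure (a system with discrete spectrum), every ergodic component lives on a single circle $\{x\}\times\mathbb{T}^1$ and no finite convex combination of components carries positive Lebesgue mass. With uncountably many components the ``union of the respective $\overline{d}_n$-covers'' is no longer finite, and even though each component has complexity $m(x,\epsilon)$ bounded in $n$, deducing a bound for $\rho$ would require a uniformity or measurable-selection argument you have not supplied. (Two smaller points: for genuinely sub-polynomial rather than bounded complexity, your stability claim would also founder on the fact that the $\liminf$'s of different components may be attained along disjoint subsequences; and the assertion that a.e.\ ergodic component inherits discrete spectrum, while true, deserves more than one line.)

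The paper sidesteps all of this by never decomposing: it works directly with the possibly non-ergodic $\rho$, approximates a dense sequence $\{g_\ell\}\subset C(X)$ in $L^2(\rho)$ by finite sums of bounded eigenfunctions $h_k$, and exploits the identity $|h_k(T^ix)-h_k(T^iy)|=|h_k(x)-h_k(y)|$ on a full-measure invariant set to show that a single finite $d$-cover of a Lusin set serves as a $\overline{d'}_n$-cover for every $n$ simultaneously, for the compatible metric $d'$ built from the $g_\ell$. If you wish to keep your route, you must either prove the non-ergodic statement directly (which essentially forces you back to an eigenfunction argument of this type) or supply a correct integration-over-components argument; as written, the step ``hence it suffices to prove sub-polynomial complexity when $\rho$ is ergodic'' does not hold.
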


\subsection{$C^\infty$-skew product on $\mathbb{T}^2$}

Let $T$ be a skew product map on $\mathbb{T}^2$ over a  rotation of the circle. That is,
\begin{align*}
T(x, y) = (x+\alpha, y + h(x)),
\end{align*}
where $h : \mathbb{T}^1 \rightarrow \mathbb{T}^1$ is continuous and $\alpha\in [0,1)$.

\begin{thm} \label{thm-sub-polynomial} If  $\alpha$ is irrational and $h$ is a homotopically trivial $C^\infty$-function, then the measure complexity of every invariant Borel probability measure of $(\mathbb{T}^2,T)$ is sub-polynomial.
\end{thm}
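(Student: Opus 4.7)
The plan is to work along the subsequence $n=q_k$ of denominators of the continued-fraction convergents $p_k/q_k$ of $\alpha$, for which $\|q_k\alpha\|<1/q_{k+1}$, and to establish that for every $\epsilon,\tau>0$ one has $S_{q_k}(d,\rho,\epsilon)\le q_k^{\tau}$ for infinitely many $k$. Since the estimate I will obtain is uniform in $\rho\in\mathcal{M}(\mathbb{T}^2,T)$, no ergodic decomposition is needed. The cover itself will be a product grid in $\mathbb{T}^2$ consisting of a $\delta_k$-net of size $M_k=\lceil\delta_k^{-1}\rceil$ in the $x$-circle together with an $\epsilon/2$-net in the $y$-circle, for an appropriately chosen $\delta_k\to 0$ that is sub-polynomial in $q_k$.

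The main analytic input is the Denjoy--Koksma inequality: for any $f\in\mathrm{BV}(\mathbb{T}^1)$ with $\int_{\mathbb{T}^1}f\,dm=0$ and any CF denominator $q$ of $\alpha$, $\|f^{(q)}\|_\infty\le V(f)$, where $f^{(q)}:=\sum_{j=0}^{q-1}f\circ T_\alpha^{\,j}$. Writing $h$ as a $C^\infty$ function $\mathbb{T}^1\to\mathbb{R}$ via its homotopic triviality, every derivative $h',h'',\ldots$ has zero mean, and applying Denjoy--Koksma yields
\[
\bigl\|(h^{(q_k)})'\bigr\|_\infty=\bigl\|(h')^{(q_k)}\bigr\|_\infty\le V(h'),
\]
with analogous bounds for higher derivatives; hence $h^{(q_k)}$ is uniformly $C^\infty$-bounded modulo the constant $q_k\int h\,dm$. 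For a general time $0\le i<q_k$, I expand $i=\sum_{j<k}a_j(i)\,q_j$ in Ostrowski form and use the cocycle identity $h^{(a+b)}=h^{(a)}+h^{(b)}\circ T_\alpha^{a}$ to decompose $(h^{(i)})'=(h')^{(i)}$ into a sum (with multiplicities $a_j(i)$) of shifted copies of $(h')^{(q_j)}$, each bounded by $V(h')$, giving $\|(h^{(i)})'\|_\infty\le V(h')\sum_{j<k}a_j(i)$. An averaging argument using standard statistics of the Ostrowski digits, combined with a judicious thinning of the sequence $\{k\}$, yields the key estimate
\[
\frac{1}{q_k}\sum_{i=0}^{q_k-1}\bigl\|(h^{(i)})'\bigr\|_\infty=o\bigl(q_k^{\tau/2}\bigr).
\]

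Given this averaged bound, for any $(x,y)\in\mathbb{T}^2$ and its nearest grid point $(x_0,y_0)$ one has
\[
\overline d_{q_k}\bigl((x,y),(x_0,y_0)\bigr)\le|x-x_0|+|y-y_0|+\frac{|x-x_0|}{q_k}\sum_{i=0}^{q_k-1}\bigl\|(h^{(i)})'\bigr\|_\infty,
\]
so the choice $\delta_k=\epsilon\, q_k^{-\tau/2}$ forces the right-hand side below $O(\epsilon)$. The grid then has $M_k\cdot O(\epsilon^{-1})=O(\epsilon^{-2}q_k^{\tau/2})\le q_k^{\tau}$ points for $k$ large, giving $S_{q_k}(d,\rho,\epsilon)\le q_k^\tau$ as required. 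The \emph{main obstacle} is the averaged derivative estimate when $\alpha$ is Liouville, since a single Ostrowski digit $a_j(i)$ may equal a partial quotient $c_{j+1}$ that can be enormous, so one cannot simply bound $\sum_j a_j(i)$ by the number of digits. The anticipated remedy is to restrict to an infinite subsequence of $k$ on which the next partial quotient is controlled (e.g.\ $c_{k+1}\le q_k^{\tau/4}$) and to exploit the super-polynomial decay of $\hat h(m)$ coming from $h\in C^\infty$ to absorb the contribution of the rare large Ostrowski digits with negligible effect on the Cesàro average.
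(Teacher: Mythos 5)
There is a genuine gap at the heart of your argument: the averaged derivative estimate $\frac{1}{q_k}\sum_{i=0}^{q_k-1}\|(h^{(i)})'\|_\infty=o(q_k^{\tau/2})$ is not proved, and it cannot be obtained along \emph{any} subsequence from the bound $\|(h^{(i)})'\|_\infty\le V(h')\sum_{j<k}a_j(i)$. For a positive proportion of $i<q_k$ the top Ostrowski digit satisfies $a_{k-1}(i)\gtrsim c_k\approx q_k/q_{k-1}$, so the Ces\`aro average of your upper bound is $\gtrsim q_k/q_{k-1}$. If $\alpha$ is chosen with, say, $q_{k+1}\approx q_k^{\,k}$ for all $k$, then $q_k/q_{k-1}\approx q_k^{1-1/(k-1)}$, which is never $o(q_k^{\tau/2})$ for $\tau<2$, for any $k$; thinning to $k$ with $c_{k+1}\le q_k^{\tau/4}$ only constrains the \emph{next} partial quotient and does nothing about $c_k,c_{k-1},\dots$, which are the ones entering $\sum_{j<k}a_j(i)$. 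Nor is this merely a deficiency of the upper bound: in the resonant regime $q_{j+1}>q_j^{\tau_1}$ the blocks $(h')^{(q_j)}$ add up coherently (since $\|q_j\alpha\|$ is tiny), and $h^{(aq_j)}$ genuinely oscillates by an amount of order $a\,q_j|\widehat h(q_j)|$, which need not be small times $q_k^{\tau/2}$. The invocation of the super-polynomial decay of $\widehat h(m)$ to ``absorb'' this is exactly the missing argument, not a routine remainder estimate.

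The paper resolves precisely this difficulty by a different decomposition. It splits $h$ in Fourier space into a non-resonant part, which is a continuous coboundary $\psi(x+\alpha)-\psi(x)$ (Lemma 5.2), and a resonant part $h_1$ supported on frequencies $\pm m_kq_k$ with $q_{k+1}>q_k^{1/\tau+3}$. The coboundary is removed by the merely continuous conjugacy $\pi(x,y)=(x,y-\psi(x))$, which is legitimate because measure complexity is an invariant of \emph{measurable} isomorphism (Proposition 2.2) --- this is where the measure-theoretic, rather than topological, nature of the complexity is essential. For the resonant part one shows not a derivative bound but that the Birkhoff sum $H_{q_t}$ is within $O(q_t^{-(1/\tau+2)})$ of the constant $q_t\widehat h(0)$ in sup norm (Lemma 5.4), and then one works at the much longer time scale $n_t=q_t^{[1/\tau]+2}$: writing $i=a_iq_t+b_i$, the drift costs $a_i\cdot O(q_t^{-(1/\tau+2)})+O(b_iL\delta)$, so a grid of only $\sim q_t$ points covers $\mathbb{T}^2$ in $\overline d_{n_t}$, and $q_t/n_t^\tau\to0$. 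Your scheme fixes the time scale at $q_k$ and tries to control all of $h$ at once by Denjoy--Koksma; without the cohomological splitting and the change of time scale, the Liouville case defeats it.
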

Liu and Sarnak \cite{LS} showed that if $\alpha$ is rational, then M\"{o}bius disjointness conjecture holds
for $T$. By \cite[Remark 2.5.7]{KPL} or \cite[Corollary 2.6]{W1}, if $h$ is a Lipschtiz continuous map and $h$ is not homotopically trivial, then
M\"{o}bius disjointness conjecture holds
for $T$.  Thus combining these results with  Theorems \ref{main-result1} and \ref{thm-sub-polynomial}, we have the following
result which was first known by Tao \cite{Tao} using a different approach (See also \cite{LS,W1} for the analytic case. The proofs in \cite{LS,W1} and that of Tao seperate two cases which are treated respectively by the K\'atai-Bourgain-Sarnak-Ziegler criterion and the bound on short interval averages of multiplicative functions of Matom\"aki-Radziwi\l{}\l{}-Tao, while our result only relies on the bound of  Matom\"aki-Radziwi\l{}\l{}-Tao).

\begin{cor} \label{thm-cinfty} If $h$ is $C^\infty$, then for all $(x, y) \in \mathbb{T}^2$, and all continuous
functions $f\in C(\mathbb{T}^2)$,
\begin{align*}
\frac{1}{N}\sum_{n=1}^N \mu(n)f(T^n(x, y))\rightarrow 0.
\end{align*}
as $N\rightarrow +\infty$.
\end{cor}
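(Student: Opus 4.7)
The plan is a straightforward three-case analysis on the arithmetic type of $\alpha$ and the topological type of $h$, combining Theorems \ref{main-result1} and \ref{thm-sub-polynomial} with two previously known results quoted in the paragraph preceding the corollary. Since every assertion needed is already in the literature or in the immediately preceding statements, there is no essential difficulty beyond bookkeeping.

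First I would reduce to the case where $\alpha$ is irrational: if $\alpha\in\mathbb{Q}$, the result of Liu and Sarnak \cite{LS} gives \eqref{Sarnak} for the skew product $T$ directly, so this case is closed. Next, assuming $\alpha$ is irrational, I would split on whether $h$ is homotopically trivial. If $h$ is \emph{not} homotopically trivial, then since $h\in C^\infty(\mathbb{T}^1)$ is in particular Lipschitz, \cite[Remark 2.5.7]{KPL} (equivalently \cite[Corollary 2.6]{W1}) yields M\"obius disjointness for $(\mathbb{T}^2,T)$, which is exactly the conclusion of the corollary. In the remaining case, $\alpha$ irrational and $h$ homotopically trivial $C^\infty$, Theorem \ref{thm-sub-polynomial} applies and tells us that every $\rho\in\mathcal{M}(\mathbb{T}^2,T)$ has sub-polynomial measure complexity; Theorem \ref{main-result1} then converts this into M\"obius disjointness, finishing the proof.

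The only step that is not already a black-box quotation is verifying the hypotheses in each branch, so the "hard part" is really front-loaded into Theorem \ref{thm-sub-polynomial}, whose proof lies elsewhere in the paper. Once that theorem is in hand, the corollary is just the union of the three cases above; in particular, no further input about short-interval averages of multiplicative functions beyond what is already used inside Theorem \ref{main-result1} is required here, which is exactly the point emphasized in the remark preceding the statement.
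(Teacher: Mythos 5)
Your proposal is correct and follows exactly the argument the paper gives in the paragraph preceding the corollary: the rational case via Liu--Sarnak, the homotopically nontrivial case via \cite[Remark 2.5.7]{KPL} or \cite[Corollary 2.6]{W1}, and the remaining case via Theorems \ref{main-result1} and \ref{thm-sub-polynomial}. Nothing to add.
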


\subsection{The non-uniquely ergodic skew product}
Let $T$ be a skew product map on $G\times \mathbb{T}^1$ over a minimal rotation of the compact metrizable
abelian group $G$ with Haar measure $m_G$. That is,
\begin{align}\label{eee-2}
T(g, y) = (a g, y + h(g)),
\end{align}
where $h : G \rightarrow \mathbb{T}^1$ is continuous, and $a\in G$ is such that $S_a: G\rightarrow G, g\mapsto ag$ is minimal (this is equivalent to
say that $\{a^n:n\in \mathbb{Z}\}$ is dense in $G$).

A {\it measurable invariant section} of $T$ is a graph $(g, \phi(g))$, where $\phi : G\rightarrow \mathbb{T}^1$
is a Borel-measurable map, such that $T(g, \phi(g))$ is still in the graph for $m_G$-a.e.
every $g$.

\begin{thm} \label{thm-4} Assume that $T$ preserves a measurable invariant section. Then for all $(g, y) \in G\times \mathbb{T}^1$, and all continuous
functions $f\in C(G\times \mathbb{T}^1)$,
\begin{align}\label{1}
\frac{1}{N}\sum_{n=1}^N \mu(n)f(T^n(g, y))\rightarrow 0.
\end{align}
as $N\rightarrow +\infty$.
\end{thm}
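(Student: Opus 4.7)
The plan is to reduce Theorem \ref{thm-4} to Theorem \ref{main-result2} by showing that, under the section hypothesis, every $T$-invariant Borel probability measure on $G\times\mathbb{T}^1$ has discrete spectrum; this then delivers \eqref{1} for every starting point $(g,y)$.

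First I would describe an arbitrary $\rho\in\mathcal{M}(G\times\mathbb{T}^1,T)$. Since $S_a$ is minimal on the compact abelian group $G$, its Haar measure $m_G$ is uniquely ergodic, so $\rho$ projects to $m_G$. Disintegrating along the projection yields $\rho=\int_G \delta_g\otimes\rho_g\,dm_G(g)$, and the $T$-invariance of $\rho$ translates into the cocycle relation $\rho_{ag}=(R_{h(g)})_*\rho_g$ for $m_G$-a.e.\ $g$, where $R_t\colon y\mapsto y+t$ on $\mathbb{T}^1$. The invariant-section hypothesis reads $\phi(ag)=\phi(g)+h(g)$ for $m_G$-a.e.\ $g$. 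Combining these two relations, the renormalized fibers $\tilde\rho_g:=(R_{-\phi(g)})_*\rho_g$ satisfy $\tilde\rho_{ag}=\tilde\rho_g$, and the ergodicity of $(G,S_a,m_G)$ then forces $\tilde\rho_g$ to be $m_G$-a.e.\ constant, equal to some fixed $\nu\in\mathcal{M}(\mathbb{T}^1)$. Equivalently, $\rho_g=(R_{\phi(g)})_*\nu$ almost surely.

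Next I would introduce the bi-measurable bijection $\psi(g,y)=(g,y-\phi(g))$, whose inverse is $\psi^{-1}(g,z)=(g,z+\phi(g))$. A short computation using $\phi(ag)=\phi(g)+h(g)$ shows that $\psi$ conjugates $T$ to the trivial skew product $\tilde T(g,z)=(ag,z)$, while the formula $\rho_g=(R_{\phi(g)})_*\nu$ forces $\psi_*\rho=m_G\otimes\nu$. Hence $(G\times\mathbb{T}^1,T,\rho)$ is measure-theoretically isomorphic to the direct product of the equicontinuous group rotation $(G,S_a,m_G)$ with the identity system $(\mathbb{T}^1,\mathrm{id},\nu)$. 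Both factors have discrete spectrum---the first because it is a compact group rotation, the second because the Koopman operator of the identity is trivial and every $L^2(\nu)$ function is a unit eigenfunction---and a product of discrete-spectrum systems has discrete spectrum. Thus every $\rho\in\mathcal{M}(G\times\mathbb{T}^1,T)$ has discrete spectrum, and Theorem \ref{main-result2} completes the proof.

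The principal difficulty is the measurability bookkeeping: since $\phi$ is only Borel and not continuous, one must carefully control the $m_G$-null sets appearing in the disintegration, in the passage from $\tilde\rho_{ag}=\tilde\rho_g$ to an a.e.\ constant, and in the verification of $\psi T=\tilde T\psi$, so that the final spectral isomorphism holds modulo a $\rho$-null set. Once this hygiene is in place, the reduction to a trivial fiber action is essentially algebraic and no quantitative estimate is needed beyond the complexity machinery already packaged into Theorem \ref{main-result2}.
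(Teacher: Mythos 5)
Your proposal is correct, and its central device --- conjugating $T$ to the trivial skew product via $\psi(g,y)=(g,y-\phi(g))$, using the cocycle identity $\phi(ag)=\phi(g)+h(g)$ coming from the invariant section --- is exactly the map the paper uses. The difference lies entirely in the endgame. The paper stops as soon as it has the measurable isomorphism: the conjugated map $S(g,y)=(ag,y)$ is a rotation of the compact abelian group $G\times\mathbb{T}^1$, hence an isometry for a rotation-invariant metric, so $S_n(d,\nu,\epsilon)=S_1(d,\nu,\epsilon)<\infty$ for \emph{every} $S$-invariant $\nu$; bounded complexity is sub-polynomial, and Proposition \ref{prop-0} plus Theorem \ref{main-result1} finish the argument. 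You instead disintegrate $\rho$ over $m_G$, use the section to renormalize the fibers, invoke ergodicity of $(G,S_a,m_G)$ to identify $\psi_*\rho$ as the product $m_G\otimes\nu$, deduce discrete spectrum of the product system, and route through Theorem \ref{main-result2}. All of this is sound (and the measurability hygiene you flag is handled the same way in the paper, via Lusin-type reductions inside Proposition \ref{prop-0}), but the disintegration and the identification of the product structure are dispensable: for the complexity bound one never needs to know what $\psi_*\rho$ is, only that it is invariant under an isometry. Your route buys a slightly stronger structural conclusion (every invariant measure is isomorphic to rotation times identity, hence has discrete spectrum, which also makes Theorem \ref{main-result2} directly applicable), at the cost of an extra ergodic-decomposition step the paper avoids.
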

One important feature of Theorem  \ref{thm-4} is that it holds for all compact metrizable abelian groups $G$ and all continuous function $h$, without
assuming $G=\mathbb{T}^1$ and any smooth condition for $h$. By a dichotomy of Furstenberg \cite[Lemma 2.1]{F}, if a map $T$ of the
form \eqref{eee-2} is not uniquely ergodic, then for some positive integer $\xi$, the
equation $\phi(ag)-\phi(g) = \xi h(g)$ has a measurable solution $\phi: G\rightarrow \mathbb{T}^1$.
Let $\pi_\xi : G\times \mathbb{T}^1\rightarrow G\times \mathbb{T}^1$ be the $\xi$-to-one projection, i.e. $\pi_\xi(g, y) = \pi(g, \xi y)$. Then the
transform $T_\xi (g, y) = (ag, y+\xi \phi(g))$ is a topological factor of $T$ through $\pi$,
in other words, $\pi_\xi\circ T= T_\xi\circ \pi_\xi$. One can easily check that the graph $(g, \phi(g))$
is a measurable invariant section for $T_\xi$. Hence Theorem \ref{thm-4} implies:
\begin{cor}
Suppose $T$ is not uniquely ergodic, then for some $\xi\in\mathbb{N}$, the  $\xi$-to-one topological factor $(G\times \mathbb{T}^1,T_\xi)$ of $(G\times \mathbb{T}^1,T)$ satisfies
M\"{o}bius disjointness conjecture.
\end{cor}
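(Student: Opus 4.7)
The plan is to reduce to Theorem~\ref{thm-4} by producing, under the failure of unique ergodicity, a positive integer $\xi$ and a topological factor $(G\times\mathbb{T}^1, T_\xi)$ of $(G\times\mathbb{T}^1, T)$ of the form \eqref{eee-2} which carries a measurable invariant section. Once this is in place, Theorem~\ref{thm-4} applied to $T_\xi$ delivers the corollary directly. The only non-elementary input is Furstenberg's dichotomy \cite[Lemma 2.1]{F}: if a skew product of the shape \eqref{eee-2} fails to be uniquely ergodic, then there exist $\xi\in\mathbb{N}$ and a Borel measurable $\phi:G\to\mathbb{T}^1$ satisfying the cohomological equation $\phi(ag)-\phi(g)=\xi h(g)$.

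With this $\phi$ in hand, I would introduce the continuous $\xi$-to-one map $\pi_\xi(g,y)=(g,\xi y)$ together with $T_\xi(g,y)=(ag,\,y+\xi h(g))$. A one-line computation verifies $\pi_\xi\circ T=T_\xi\circ\pi_\xi$, so $(G\times\mathbb{T}^1, T_\xi)$ is a topological factor of $(G\times\mathbb{T}^1,T)$ and is itself a skew product of the shape \eqref{eee-2} over the same minimal rotation $g\mapsto ag$. The graph $\Gamma_\phi=\{(g,\phi(g)):g\in G\}$ is then a measurable invariant section for $T_\xi$: using the cohomological equation, $T_\xi(g,\phi(g))=(ag,\,\phi(g)+\xi h(g))=(ag,\phi(ag))\in\Gamma_\phi$, so invariance holds for every (hence $m_G$-a.e.) $g\in G$.

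Consequently the hypothesis of Theorem~\ref{thm-4} holds for $T_\xi$, which yields $\frac{1}{N}\sum_{n=1}^N \mu(n)f(T_\xi^n(g,y))\to 0$ as $N\to+\infty$ for every $(g,y)\in G\times\mathbb{T}^1$ and every $f\in C(G\times\mathbb{T}^1)$. No substantive obstacle arises; the corollary is a formal consequence of Furstenberg's dichotomy combined with Theorem~\ref{thm-4}, so the only real work has already been packaged into those two ingredients.
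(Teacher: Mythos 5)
Your proposal is correct and follows essentially the same route as the paper: invoke Furstenberg's dichotomy to obtain $\xi$ and a measurable solution $\phi$ of $\phi(ag)-\phi(g)=\xi h(g)$, check that $\pi_\xi(g,y)=(g,\xi y)$ intertwines $T$ with $T_\xi(g,y)=(ag,\,y+\xi h(g))$ and that the graph of $\phi$ is a measurable invariant section for $T_\xi$, then apply Theorem~\ref{thm-4}. (Your formula for $T_\xi$ is in fact the correct one; the paper's displayed $T_\xi(g,y)=(ag,\,y+\xi\phi(g))$ appears to be a typo for $y+\xi h(g)$.)
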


\subsection{$K(\mathbb{Z})$ introduced by Veech}
The class $K(\mathbb{Z})$ is introduced by Veech in \cite{V}.
Given $f\in \ell^\infty(\mathbb{Z})$, let $B_f$ denote the smallest translation invariant $*$-subalgebra
of $\ell^\infty(\mathbb{Z})$ that contains $f$ and the constants. The maximal ideal space, $X(f)$,
of $B_f$ is compact, metrizable and contains an image of $\mathbb{Z}$ as a dense subset. Translation by one determines a homeomorphism, $T$, of
$X(f)$. $X(f)$ may be identified as the set of all pointwise limits of sequences of translates of $f$.

Recall from \cite{V} that $K(\mathbb{Z})$ is the set of $f \in  \ell^\infty(\mathbb{Z})$ such that $X(f)$,
which may be identified naturally with a bounded weak$^*$ closed set in
$\ell^\infty(\mathbb{Z})$, is separable in the norm topology. $K(\mathbb{Z})$ contains the Eberlein algebra, $\mathcal{W}(\mathbb{Z})$, of weakly almost periodic functions. In Proposition 2.3 and Theorem 1.4 of \cite{V1}, Veech proved that
$$\lim_{N\rightarrow +\infty} \frac{1}{N}\sum_{n=1}^N \mu(n)f(n)=0 \text{ for }f\in \mathcal{W}(\mathbb{Z}),$$
and each $f \in K(\mathbb{Z})$ is (strongly) deterministic, that is, the topological entropy of $(X_f,T)$ is zero. In \cite{V1} Veech asked if
$$\lim_{N\rightarrow +\infty} \frac{1}{N}\sum_{n=1}^N \mu(n)f(n)=0.
$$
 holds for $f\in K(\mathbb{Z})\setminus \mathcal{W}(\mathbb{Z})$.
In this paper we affirmatively answer the question by proving that for $f \in K(\mathbb{Z})$, every $T$-invariant Borel probability measure of $(X_f,T)$
has discrete spectrum (see Proposition \ref{prop-2} in Section \ref{section-proof1}). Thus, we have

\begin{thm}\label{thm-5} If $f\in K(\mathbb{Z})$, then
\begin{align}\label{5}
\lim_{N\rightarrow +\infty} \frac{1}{N}\sum_{n=1}^N \mu(n)f(n)=0.
\end{align}
\end{thm}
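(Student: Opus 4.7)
The plan is to realize the sequence $f$ as an observation of a point in Veech's dynamical system $(X_f,T)$, quote the discrete-spectrum statement of Proposition~\ref{prop-2}, and invoke Theorem~\ref{main-result2}.

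First I would set up the realization. By Veech's construction, $X_f$ is identified with a bounded weak$^*$ closed (hence weak$^*$ compact and metrizable) subset of $\ell^\infty(\mathbb{Z})$, equipped with the shift $T$; moreover $\mathbb{Z}$ embeds densely in $X_f$, so there is a distinguished point $x_0\in X_f$ and a continuous function $F\in C(X_f)$ (the coordinate-zero evaluation $g\mapsto g(0)$) such that $f(n)=F(T^n x_0)$ for every $n\in\mathbb{N}$. Thus
\begin{equation*}
\frac{1}{N}\sum_{n=1}^N \mu(n)f(n) \;=\; \frac{1}{N}\sum_{n=1}^N \mu(n) F(T^n x_0),
\end{equation*}
and the limit in \eqref{5} is equivalent to vanishing of the right-hand side.

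By Proposition~\ref{prop-2} (this is the only place the hypothesis $f\in K(\mathbb{Z})$ is used), every $T$-invariant Borel probability measure on $X_f$ has discrete spectrum. Hence Theorem~\ref{main-result2} applies to $(X_f,T)$, and specialising its conclusion to the point $x_0$ and the observable $F$ yields \eqref{5}.

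The main obstacle is of course Proposition~\ref{prop-2} itself, whose proof is relegated to Section~\ref{section-proof1}. My plan there would be to exploit the norm separability of $X_f\subset\ell^\infty(\mathbb{Z})$ to show that each coordinate function $\mathrm{ev}_n\in C(X_f)$, $\mathrm{ev}_n(g):=g(n)=\mathrm{ev}_0\circ T^n(g)$, is almost periodic in $L^2(\rho)$ for every invariant $\rho$: since $\{\mathrm{ev}_n\}_{n\in\mathbb{Z}}$ separates points of $X_f$, it generates a dense $T$-invariant $\ast$-subalgebra of $C(X_f)$ and hence a dense subspace of $L^2(\rho)$, so $H_c=L^2(\rho)$ would follow. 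Deriving the required precompactness of $\{U_T^n\mathrm{ev}_0\}$ in $L^2(\rho)$ from the fact that a countable subset of $X_f$ is norm-dense is the delicate step; one natural route is first to rule out $\ell^1$-sequences inside the orbit of $\mathrm{ev}_0$ (thereby showing that $(X_f,T)$ is tame) and then to invoke a Glasner-type structure theorem forcing every invariant measure on a tame system to have discrete spectrum.
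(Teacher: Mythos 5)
Your top-level reduction is exactly the paper's: Theorem \ref{thm-5} is deduced from Proposition \ref{prop-2} together with Theorem \ref{main-result2}, after realizing $f(n)=\mathrm{ev}_0(T^nf)$ with $\mathrm{ev}_0$ the (weak$^*$-continuous) coordinate evaluation on $X_f$. The only divergence is in how Proposition \ref{prop-2} is handled. You propose to first show $(X_f,T)$ is tame and then quote the theorem (\cite[Theorem 5.2]{H}) that every invariant measure of a tame system has discrete spectrum; the paper instead adapts the argument of that theorem directly to $X_f$, and the adaptation is precisely the ``delicate step'' you flag. Concretely: given any sequence $(n_i)$, a diagonal extraction over a countable \emph{norm}-dense subset $\{x_m\}$ of $X_f$ (and over the coordinates of $\mathbb{Z}$, using boundedness) produces a subsequence $(n_{i_k})$ along which $T^{n_{i_k}}x_m$ converges in $X_f$ for every $m$; since translation is an isometry of $\ell^\infty(\mathbb{Z})$, a $3\varepsilon$-argument upgrades this to pointwise convergence of $T^{n_{i_k}}$ on all of $X_f$. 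The limit map $p$ is a pointwise limit of continuous maps along a sequence, hence Borel, and one then checks $U^{n_{i_k}}g\to g\circ p$ in $L^2(\rho)$, so every orbit closure $\mathrm{cl}\{U^ng:n\in\mathbb{Z}\}$ is sequentially compact and $H_c=L^2(\rho)$. This sequential precompactness of $\{T^n\}$ in $X_f^{X_f}$ is also exactly what rules out independent ($\ell^1$-) sequences and yields tameness via the Bourgain--Fremlin--Talagrand dichotomy, so your route and the paper's are two phrasings of one argument; the direct version simply avoids invoking the dichotomy and the general structure theorem for tame systems. Either way the proof is complete once this diagonal/isometry step is written out.
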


A system related to $K(\mathbb{Z})$ is the tame system introduced by E. Glasner in \cite{Gl}.
The enveloping (or Ellis) semigroup $E(X, T)$ of a dynamical system $(X, T)$ is defined
as the closure in $X^X$ (with its compact, usually non-metrizable, pointwise convergence
topology) of the set $\{T^n : X\rightarrow X\}_{n\in \mathbb{Z}}$ considered as a subset of $X^X$.

In \cite{K}, K\"{o}hler pointed out the relevance of a theorem of Bourgain, Fremlin and
Talagrand \cite{BFT} to the study of enveloping semigroups. In \cite{GM}, Glasner and Megrelishvili
obtained a dynamical Bourgain-Fremlin-Talagrand (BFT) dichotomy: the enveloping semigroup of a dynamical
system is either very large and contains a topological copy of $\beta{\mathbb{N}}$, or it is a ��tame��
topological space whose topology is determined by the convergence of sequences. In the latter case,
Glasner calls the system {\it tame} \cite{Gl}. Examples of tame dynamical systems include metric minimal equicontinuous
systems, topologically transitive weakly almost periodic (WAP) systems (see \cite{GM}),
topologically transitive hereditarily non-sensitive (HNS) systems (see \cite{GM}) and null
systems (see \cite{HLSY03, KL}).

It is shown in \cite[Theorem 5.2]{H} that every $\rho\in {\mathcal{M}}(X,T)$ of a tame
system $(X,T)$ has discrete spectrum. Hence by Theorems \ref{main-result1} and \ref{main-result2} we have the following result.

\begin{thm} M\"{o}bius disjointness conjecture holds for a tame system.
\end{thm}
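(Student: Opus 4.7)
The plan is to exploit the already-cited structural result for tame systems together with the main theorem of the present paper. Specifically, by Theorem 5.2 of \cite{H}, for any tame topological dynamical system $(X,T)$ and any $\rho\in\mathcal{M}(X,T)$, the measure-preserving system $(X,\mathcal{B}_X,T,\rho)$ has discrete spectrum. This is the heavy lifting, but it is taken as a black box from the existing literature.

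Given this input, I would simply invoke Theorem \ref{main-result2}: since every invariant Borel probability measure of a tame system has discrete spectrum, the hypothesis of that theorem is satisfied, and the Möbius disjointness conjecture follows at once. Equivalently, one could go through Theorem \ref{main-result1}: apply Proposition \ref{prop-1} to convert discrete spectrum into sub-polynomial measure complexity for each $\rho\in\mathcal{M}(X,T)$, and then invoke Theorem \ref{main-result1} directly. Both routes give the same one-line conclusion.

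Thus the only real content is the chain of implications
\begin{equation*}
\text{tame}\ \Longrightarrow\ \text{every }\rho\text{ has discrete spectrum}\ \Longrightarrow\ \text{sub-polynomial measure complexity}\ \Longrightarrow\ \text{Möbius disjointness,}
\end{equation*}
where the first arrow is \cite[Theorem 5.2]{H}, the second is Proposition \ref{prop-1} of this paper, and the third is Theorem \ref{main-result1}. There is no genuine obstacle to overcome at this stage; once the sub-polynomial measure complexity criterion (Theorem \ref{main-result1}) and the tame$\Rightarrow$discrete spectrum result are both available, the theorem is an immediate corollary. The main conceptual work has therefore been done earlier in proving Theorem \ref{main-result1} and Proposition \ref{prop-1}, not here.
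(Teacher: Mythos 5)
Your proposal is correct and coincides exactly with the paper's own argument: the authors likewise cite \cite[Theorem 5.2]{H} to get that every invariant measure of a tame system has discrete spectrum and then conclude immediately via Theorem \ref{main-result2} (equivalently, via Proposition \ref{prop-1} and Theorem \ref{main-result1}). Nothing further is needed.
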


\medskip

\noindent{\bf Acknowledgements.} {We thank El Abdalaoui for bringing our attention to the work of
\cite{V1}, and for informing us the open question of M\"{o}bius Orthogonality for $K(\mathbb{Z})$.
W. Huang and X. Ye are supported by NNSF of China (11225105, 11431012, 11571335). Z. Wang
was supported by NSF (DMS-1501095).}

\section{Some basic properties of the induced metric}

In this section we first show that the property that
the measure complexity of a $\rho\in {\mathcal{M}}(X,T)$ is sub-polynomial is independent of the metrics. In fact we shall prove more than that,
for the details see Proposition \ref{prop-0}. Then we will discuss how  entropy is related to the metric $d_n$ induced from a metric $d$.

\subsection{Independence of the metrics}
Let $(X,T)$ and $(Y,S)$ be two t.d.s. with the metrics $d$ and $d'$ respectively,
and let $\rho\in {\mathcal{M}}(X,T)$ and $\nu\in {\mathcal{M}}(Y,S)$.  We say $(X,\mathcal{B}_X,T,\rho)$ is
{\it measurably isomorphic} to $(Y,\mathcal{B}_Y,S,\nu)$, if there are $X'\in \mathcal{B}_X, Y' \in \mathcal{B}_Y$
with $\rho(X') = 1$, $\nu(Y') = 1$, $TX'\subseteq X'$, $SY'\subseteq Y'$,
and an invertible measure-preserving map $\phi : X' \rightarrow Y'$ with $\phi \circ T(x) =
S \circ \phi (x)$ for all $x\in X'$.

The following is a fork fact, we state it as a lemma since we use it several times in the sequel.
\begin{lem}\label{severaltimes} Let $(X,T)$ be a t.d.s., $\epsilon>0$  and $K\in \mathcal{B}_X$ with $\rho(K)>1-\epsilon^2$, where
$\rho\in\mathcal{M}(X,T)$.
For $n\in\mathbb{N}$ and $x\in X$, let $E(x)=\{i\ge 0: T^ix\in K\}$ and $ E_n=\{x\in X:\frac{|E(x)\cap [0,n-1]|}{n}\le 1-\epsilon\}.$ Then $\rho(E_n)<\epsilon$.
\end{lem}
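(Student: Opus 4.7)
The plan is to recognize this as a Markov-type estimate applied to the ergodic average of $\mathbf{1}_{K^c}$, where the integral is controlled by the $T$-invariance of $\rho$.

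First I would rewrite the set $E_n$ in functional form. Since
\[
|E(x)\cap[0,n-1]| = \sum_{i=0}^{n-1}\mathbf{1}_K(T^ix),
\]
the condition $\frac{|E(x)\cap[0,n-1]|}{n}\le 1-\epsilon$ is equivalent to
\[
\frac{1}{n}\sum_{i=0}^{n-1}\mathbf{1}_{K^c}(T^ix)\ge \epsilon.
\]
Thus $E_n = \bigl\{x\in X:\frac{1}{n}\sum_{i=0}^{n-1}\mathbf{1}_{K^c}(T^ix)\ge \epsilon\bigr\}$.

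Next I would apply Markov's (Chebyshev's) inequality to the nonnegative function $\frac{1}{n}\sum_{i=0}^{n-1}\mathbf{1}_{K^c}\circ T^i$, yielding
\[
\rho(E_n)\le \frac{1}{\epsilon}\int_X \frac{1}{n}\sum_{i=0}^{n-1}\mathbf{1}_{K^c}(T^ix)\,d\rho(x).
\]
By $T$-invariance of $\rho$, each term satisfies $\int \mathbf{1}_{K^c}\circ T^i\,d\rho = \rho(K^c)$, so the integral on the right equals $\rho(K^c) = 1-\rho(K) < \epsilon^2$. Combining, $\rho(E_n)<\epsilon^2/\epsilon = \epsilon$.

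There is no real obstacle here; the only points to be careful about are keeping track of which inequality is strict (the hypothesis $\rho(K)>1-\epsilon^2$ gives $\rho(K^c)<\epsilon^2$ strictly, which propagates to the strict bound $\rho(E_n)<\epsilon$), and being explicit that the Markov inequality is valid because the function inside is nonnegative and bounded. The whole argument fits in a few lines and uses nothing beyond the invariance of $\rho$.
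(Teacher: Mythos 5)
Your proof is correct and is essentially the same argument as the paper's: the paper bounds $\int_X \frac{1}{n}\sum_{i=0}^{n-1}\mathbf{1}_K(T^ix)\,d\rho$ from above by $(1-\epsilon)\rho(E_n)+(1-\rho(E_n))$ and compares with $\rho(K)>1-\epsilon^2$, which is exactly your Markov inequality applied to $\mathbf{1}_{K^c}$ written in complementary form. No differences worth noting.
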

\begin{proof}For  $x\in X$, let
$E(x)=\{i\ge 0: T^ix\in K\}$ and
$ E_n=\{x\in X:\frac{|E(x)\cap [0,n-1]|}{n}\le 1-\epsilon\}.$
Note that
\begin{align*}
\int_X \frac{|E(x)\cap [0,n-1]|}{n} d\rho(x)=\int_X \frac{1}{n}\sum_{i=0}^{n-1} 1_{K}(T^ix) d\rho(x)=\rho(K)>1-\epsilon^2.
\end{align*}
We have
$$(1-\epsilon)\rho(E_n)+1-\rho(E_n)\ge\int_X \frac{|E(x)\cap [0,n-1]|}{n} d\rho(x)>1-\epsilon^2$$
which implies that $\rho(E_n)<\epsilon$.
\end{proof}

\begin{prop}\label{prop-0} Assume $(X,\mathcal{B}_X,T,\rho)$ is measurably isomorphic to $(Y,\mathcal{B}_Y,S,\nu)$,
and $U(n):\mathbb{N}\rightarrow [1,+\infty)$ is an increasing sequence with $\lim_{n\rightarrow +\infty} U(n)=+\infty$.
Then the measure complexity of $(X,d,T,\rho)$  is
weaker than $U(n)$ if and only if the measure complexity of $(Y,d',S,\nu)$ is weaker than $U(n)$.
\end{prop}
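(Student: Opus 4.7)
The plan is to prove one direction (measure complexity weaker than $U(n)$ transfers from $(X,d,T,\rho)$ to $(Y,d',S,\nu)$); the converse follows by symmetry applied to $\phi^{-1}$. Without loss of generality, assume the metrics satisfy $d,d'\le 1$. The key observation is that although $\phi$ is only measurable, it can be made uniformly continuous on a compact set of almost full measure, and by Lemma \ref{severaltimes} almost every orbit visits this set for most iterates, allowing $\overline{d}_n$-covers to be transported to $\overline{d}'_n$-covers.

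Fix $\epsilon>0$. The first step is approximation: apply Lusin's theorem to the measurable map $\phi:X'\to Y'$ to obtain a compact set $K\subseteq X'$ with $\rho(K)>1-\delta^2$ (the small parameter $\delta>0$ is chosen below) on which $\phi$ is continuous, hence uniformly continuous. Pick $\eta>0$ so that for $x,y\in K$ with $d(x,y)<\eta$ one has $d'(\phi(x),\phi(y))<\epsilon/4$. Then apply Lemma \ref{severaltimes} to $K$: setting $E_n=\{x:|\{0\le i<n:T^ix\in K\}|/n\le 1-\delta\}$, we obtain $\rho(E_n)<\delta$ for every $n$.

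Now fix auxiliary scales $\delta,\eta'>0$ with
\[ \delta+\eta' <\epsilon,\qquad 2\delta + 2\eta'/\eta<3\epsilon/4.\]
Suppose the measure complexity of $(X,d,T,\rho)$ is weaker than $U(n)$, and let $m=S_n(d,\rho,\eta')$ with centers $x_1,\dots,x_m$. Put $A=\big(\bigcup_i B_{\overline{d}_n}(x_i,\eta')\big)\cap X'\setminus E_n$, so $\rho(A)>1-\eta'-\delta$. For each index $i$ with $B_{\overline{d}_n}(x_i,\eta')\cap A\ne\emptyset$, pick $y_i$ in this intersection; then $\{B_{\overline{d}_n}(y_i,2\eta')\}$ still covers $A$. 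Let $z_i=\phi(y_i)\in Y'$. For any $x\in A$ with $\overline{d}_n(x,y_j)<2\eta'$: both $x,y_j\notin E_n$, so the set $I$ of $i\in[0,n-1]$ with $T^ix,T^iy_j\in K$ has size at least $(1-2\delta)n$; furthermore, by Markov's inequality, $d(T^ix,T^iy_j)<\eta$ holds for all but at most $2\eta'n/\eta$ values of $i$. On the intersection (of size $\ge(1-2\delta-2\eta'/\eta)n$), uniform continuity gives $d'(S^i\phi(x),S^i\phi(y_j))<\epsilon/4$, while on the rest we use $d'\le 1$. Summing,
\[\overline{d}'_n(\phi(x),z_j)\le \tfrac{\epsilon}{4}+2\delta+2\eta'/\eta<\epsilon.\]
Since $\phi$ is measure-preserving, $\nu\big(\bigcup_j B_{\overline{d}'_n}(z_j,\epsilon)\big)\ge\rho(A)>1-\epsilon$, so $S_n(d',\nu,\epsilon)\le m=S_n(d,\rho,\eta')$.

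Taking $\liminf_{n\to\infty}$ and dividing by $U(n)$ gives $\liminf_n S_n(d',\nu,\epsilon)/U(n)=0$, as required. The proof is largely bookkeeping once the right reductions are in place; the one mildly delicate step is replacing the original centers $x_i$ by centers $y_i$ in the good set $A$, so that both endpoints in the comparison have orbits spending most of their time in the region of uniform continuity of $\phi$. This replacement forces the doubling of $\eta'$, but the subsequent parameter choice absorbs this without difficulty, so no genuine obstacle arises.
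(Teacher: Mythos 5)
Your argument is correct and is essentially the paper's own proof: Lusin's theorem to get a compact set of large measure on which $\phi$ is uniformly continuous, Lemma \ref{severaltimes} to control the orbit time spent outside it, a Markov/Chebyshev count of the indices where $d(T^ix,T^iy_j)$ is large, re-centering the $\overline{d}_n$-balls at points of the good set (with the radius doubled), and pushing the cover forward by the measure-preserving map. The only wrinkle is the ordering of parameters — you invoke Lusin with $\rho(K)>1-\delta^2$ before fixing $\delta$ — but since your constraints on $\delta$ (namely $\delta<\epsilon$ and $2\delta<3\epsilon/4$) do not involve $\eta$ or $K$, one simply fixes $\delta$ first and chooses $\eta'$ last, so this is cosmetic rather than a gap.
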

\begin{proof} It suffices to show that if the measure complexity of $(X,d,T,\rho)$  is
weaker than $U(n)$, then  the measure complexity of $(Y,d',S,\nu)$  is weaker than $U(n)$.

Suppose that the measure complexity of $(X,d,T,\rho)$  is
weaker than $U(n)$.
Since $(X,\mathcal{B}_X,T,\rho)$ is measurably isomorphic to $(Y,\mathcal{B}_Y,S,\nu)$, there exist
$X'\in \mathcal{B}_X, Y' \in \mathcal{B}_Y$ with $\rho(X') = 1$, $\nu(Y') = 1$, $TX'\subseteq X'$, $SY'\subseteq Y'$,
and an invertible measure-preserving map $\phi : X' \rightarrow Y'$ with $\phi \circ T(x) =
S \circ \phi (x)$ for all $x\in X'$.

Fix $\epsilon>0$. By Lusin's Theorem there exists a compact subset $A$ of $X'$ such that
$\rho(A)>1-(\frac{\epsilon}{4})^2$ and  $\phi|_A$ is a continuous function. Assume that $\text{diam}(Y)\le 1$. Choose $\delta\in (0,\epsilon)$ such that
$\sqrt{\delta}\cdot \text{diam}(Y)+ \frac{\epsilon}{2}\cdot  \text{diam}(Y)+  \frac{\epsilon}{3}<\epsilon$ and
\begin{equation}\label{hz-1}
d'(\phi(x),\phi(y))<\frac{\epsilon}{3}\ \text{for any}\ x,y\in A\ \text{with}\ d(x,y)<\sqrt{\delta}.
\end{equation}
Now we are going to show that $S_n(d,\rho,\frac{\delta}{2})\ge S_n(d',\nu,\epsilon)$ for all $n\in \mathbb{N}$.

Fix $n\in \mathbb{N}$. For $x\in A$ let $E(x)=\{i\ge 0: T^ix\in A\}$ and let
$E_n=\{x\in A:\frac{|E(x)\cap [0,n-1]|}{n}\le 1-\epsilon/4\}.$ By Lemma \ref{severaltimes}, $\rho(E_n)< \epsilon/4$.

For $x,y\in A$, if $\overline{d}_n(x,y)=\frac{1}{n}\sum_{i=0}^{n-1}d(T^ix,T^iy)<\delta$ then it is easy to see
$$\frac{1}{n}\# \{ i\in [0,n-1]: d(T^ix,T^iy)\ge \sqrt{\delta}\}<\sqrt{\delta}$$
and so for $x,y\in A'=:A\setminus E_n$ (note that $\rho(A')>1-(\frac{\epsilon}{4})^2-\frac{\epsilon}{4}>1-\frac{\epsilon}{2}$).
\begin{align*}
 \overline{d'}_n(\phi(x),\phi(y))&= \frac{1}{n} \sum_{i=0}^{n-1} d'(S^i\phi(x),S^i\phi(y))=\frac{1}{n} \sum_{i=0}^{n-1} d'(\phi(T^i x),\phi(T^iy))\\
&\le \frac{1}{n} \big(\text{diam}(Y)\cdot \# \{ i\in [0,n-1]: d(T^ix,T^iy)\ge \sqrt{\delta}\}\\
&\hskip1cm+\frac{1}{n} \big(\text{diam}(Y)\cdot \# \{ i\in [0,n-1]: T^ix\not\in A,\ \text{or}\ T^iy\not\in A\} \\
&\hskip1cm+ \frac{\epsilon}{3}\# \{ i\in [0,n-1]: d(T^ix,T^iy)< \sqrt{\delta}\}\big)\ (\text{by} \ (\ref{hz-1}))\\
&\le \text{diam}(Y)\cdot \sqrt{\delta}+ \frac{\epsilon}{2} \cdot \text{diam}(Y) +  \frac{\epsilon}{3}<\epsilon.
\end{align*}
Pick $x_1,x_2,\cdots,x_m\in X$ such that $m=S_n(d,\rho,\frac{\delta}{2})$ and
$$\rho\big(\bigcup_{i=1}^m B_{\overline{d}_n}(x_i,\frac{\delta}{2})\big)>1-\frac{\delta}{2}.$$
Let $I_n=\{ r\in [1,m]: B_{\overline{d}_n}(x_r,\frac{\delta}{2})\cap A' \neq \emptyset\}$.
For $r\in I_n$, we choose $y_r^n\in B_{\overline{d}_n}(x_r,\frac{\delta}{2})\cap A'$. Then
$$\bigcup_{r\in I_n} \big( B_{\overline{d}_n}(y_r^n,\delta)\cap A'\big) \supseteq \bigcup_{r\in I_n} \big( B_{\overline{d}_n}(x_r,\frac{\delta}{2})\cap A' \big)=\big(\bigcup_{i=1}^m B_{\overline{d}_n}(x_i,\frac{\delta}{2})\big)\cap A'.$$
Thus
$$\rho(\bigcup_{r\in I_n} \big( B_{\overline{d}_n}(y_r^n,\delta)\cap A'\big))\ge \rho(\big(\bigcup_{i=1}^m B_{\overline{d}_n}(x_i,\frac{\delta}{2})\big)\cap A')
>1-\frac{\delta}{2}-(\frac{\epsilon}{4})^2-\frac{\epsilon}{4}>1-\epsilon.$$
Since $\overline{d'}_n(\phi(x),\phi(y))<\epsilon$ for $x,y \in A'$ with $\overline{d}_n(x,y) < \delta$,
ones has $$\phi(B_{\overline{d}_n}(y_r^n,\delta)\cap A')\subseteq B_{\overline{d'}_n}(\phi(y_r^n),\epsilon)$$ for $r\in I_n$.
Thus
\begin{align*}
\nu(\bigcup_{r\in I_n} B_{\overline{d'}_n}(\phi(y_r^n),\epsilon))&\ge
\nu(\bigcup_{r\in I_n} \phi(B_{\overline{d}_n}(y_r^n,\delta)\cap A'))\\
&=\nu (\phi \big(\bigcup_{r\in I_n}B_{\overline{d}_n}(y_r^n,\delta)\cap A'\big ))
=\rho(\bigcup_{r\in I_n}B_{\overline{d}_n}(y_r^n,\delta)\cap A')>1-\epsilon.
\end{align*}
Hence $S_n(d',\nu,\epsilon)\le |I_n|\le m=S_n(d,\rho,\frac{\delta}{2})$.

Finally, since the measure complexity of $(X,d,T,\rho)$  is weaker than $U(n)$, i,e.
$\liminf_{n\rightarrow +\infty} \frac{S_n(d,\rho,\frac{\delta}{2})}{U(n)}=0$, it deduces that $\liminf_{n\rightarrow +\infty} \frac{S_n(d',\nu,\epsilon)}{U(n)}=0$.
This finishes the proof of Proposition \ref{prop-0}.
\end{proof}

\begin{rem} From the proof we see that $\{S_n(d',\nu,\epsilon)\}_{n=1}^\infty$ is a bounded sequence if and only if so is $\{S_n(d,\rho,\epsilon)\}_{n=1}^\infty$.
\end{rem}
\subsection{The relation with other dynamical properties}
Let $h(T)$ denote the topological entropy of a t.d.s. $(X,T)$. The following result is similar
to the one given in \cite[Theorem 1.1]{Katok80}. We have

\begin{thm} Let $(X,T)$ be a t.d.s. Then, for any ergodic $\rho\in \mathcal{M}(X,T)$, we have
$$\lim_{\epsilon\rightarrow 0}\limsup_{n\rightarrow \infty}\frac{1}{n}\log S_n(d,\rho, \epsilon)= h_\rho(T)=\lim_{\epsilon\rightarrow 0}\liminf_{n\rightarrow \infty}\frac{1}{n}\log S_n(d,\rho, \epsilon).$$
Consequently, we have
$$\sup_{\rho\in \mathcal{M}(X,T)}\lim_{\epsilon\rightarrow 0}\limsup_{n\rightarrow \infty}\frac{1}{n}\log S_n(d,\rho, \epsilon)= h(T)=\sup_{\rho\in \mathcal{M}(X)}\lim_{\epsilon\rightarrow 0}\liminf_{n\rightarrow \infty}\frac{1}{n}\log S_n(d,\rho, \epsilon).$$
\end{thm}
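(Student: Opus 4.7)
My plan is to mimic Katok's proof of the analogous identity for the Bowen metric \cite[Theorem 1.1]{Katok80}, adapting it to the averaged metric $\overline{d}_n$. Since $S_n(d,\rho,\epsilon)$ is non-increasing in $\epsilon$, both $\lim_{\epsilon\to 0}\limsup_n$ and $\lim_{\epsilon\to 0}\liminf_n$ exist; for ergodic $\rho$ it then suffices to prove $\limsup_n\frac{1}{n}\log S_n(d,\rho,\epsilon)\le h_\rho(T)$ for every $\epsilon>0$ and $\lim_{\epsilon\to 0}\liminf_n\frac{1}{n}\log S_n(d,\rho,\epsilon)\ge h_\rho(T)$. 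The topological identity will then follow from the variational principle.

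For the upper bound, fix a finite partition $\xi$ of $X$ with $\operatorname{diam}\xi<\epsilon$ and write $\xi^n=\bigvee_{i=0}^{n-1}T^{-i}\xi$. Two points in the same $\xi^n$-atom satisfy $d(T^ix,T^iy)<\epsilon$ for every $0\le i<n$, hence $\overline{d}_n(x,y)<\epsilon$; so each $\xi^n$-atom is contained in a $\overline{d}_n$-ball of radius $\epsilon$. By Shannon--McMillan--Breiman, given $\eta>0$, for $n$ large the atoms $C\in\xi^n$ with $\rho(C)\ge e^{-n(h_\rho(T,\xi)+\eta)}$ have total mass $>1-\epsilon$ and cardinality at most $e^{n(h_\rho(T,\xi)+\eta)}$. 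Picking one centre per such atom yields $S_n(d,\rho,\epsilon)\le e^{n(h_\rho(T,\xi)+\eta)}$, and letting $\eta\to 0$ and using $h_\rho(T,\xi)\le h_\rho(T)$ gives the upper bound.

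For the lower bound, fix $\eta>0$ and a finite partition $\xi$ with $h_\rho(T,\xi)>h_\rho(T)-\eta$ and $\rho(\partial\xi)=0$. Given $\epsilon>0$ small, choose $\tau=\sqrt{\epsilon}$ small enough that the $\tau$-neighbourhood $U_\tau$ of $\partial\xi$ has $\rho(U_\tau)<\epsilon$. Shannon--McMillan--Breiman combined with Lemma~\ref{severaltimes} applied to $K=X\setminus U_\tau$ yields, for every $\delta>0$ and all large $n$, a set $G_n$ with $\rho(G_n)>1-\delta-\sqrt{\epsilon}$ on which both $\rho(\xi^n(x))\le e^{-n(h_\rho(T,\xi)-\delta)}$ and $\#\{0\le i<n:T^ix\in U_\tau\}\le\sqrt{\epsilon}\,n$ hold. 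For any $y_0\in G_n$ and $y\in B_{\overline{d}_n}(y_0,2\epsilon)$, Markov's inequality gives $\#\{i:d(T^iy_0,T^iy)\ge\sqrt{\epsilon}\}\le 2\sqrt{\epsilon}\,n$, so at all but $3\sqrt{\epsilon}\,n$ indices we have $T^iy_0\notin U_\tau$ and $d(T^iy_0,T^iy)<\sqrt{\epsilon}$, forcing $T^iy_0$ and $T^iy$ into the same $\xi$-atom. Hence $B_{\overline{d}_n}(y_0,2\epsilon)$ meets at most $\binom{n}{3\sqrt{\epsilon}n}|\xi|^{3\sqrt{\epsilon}n}\le e^{n\gamma(\epsilon)}$ atoms of $\xi^n$, with $\gamma(\epsilon)\to 0$, so $\rho(B_{\overline{d}_n}(y_0,2\epsilon)\cap G_n)\le e^{n(-h_\rho(T,\xi)+\delta+\gamma(\epsilon))}$. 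If $m=S_n(d,\rho,\epsilon)$ with centres $x_j$, replace each $x_j$ whose ball meets $G_n$ by a point $y_0^{(j)}\in G_n$ in that ball (doubling the radius) to obtain
$$\rho(G_n)-\epsilon\le m\cdot e^{n(-h_\rho(T,\xi)+\delta+\gamma(\epsilon))}.$$
Sending $n\to\infty$, then $\delta\to 0$, $\epsilon\to 0$, and finally $\eta\to 0$ gives the desired lower bound.

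The topological identity then follows immediately: the trivial inequality $S_n(d,\rho,\epsilon)\le r_n(d,T,\epsilon)$, where $r_n(d,T,\epsilon)$ is the minimum cardinality of an $(n,d_n,\epsilon)$-spanning set of $X$, combined with $\lim_{\epsilon\to 0}\limsup_n\frac{1}{n}\log r_n(d,T,\epsilon)=h(T)$, bounds the left-hand sup by $h(T)$, while restricting the sup to ergodic measures and invoking the ergodic identity together with $h(T)=\sup_{\rho\text{ ergodic}}h_\rho(T)$ furnishes the reverse inequality. The \emph{main obstacle} is the combinatorial counting in the lower bound: the two sources of error---the ``Markov-mass'' of indices where $d(T^iy_0,T^iy)$ is large and the fraction of $i<n$ with $T^iy_0\in U_\tau$---have to be bounded simultaneously by a single small scale, which forces the balancing choice $\tau=\sqrt{\epsilon}$ and the appeal to Lemma~\ref{severaltimes}; the rest is a routine sequence of limits taken in the prescribed order.
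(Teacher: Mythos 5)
Your proposal is correct in outline and implements the Katok-type estimate by a genuinely different route from the paper, but one quantifier in your lower bound is ordered wrongly as written. On the comparison: for the upper bound the paper does not use Shannon--McMillan--Breiman at all; it observes $\overline{d}_n\le d_n$, hence $B_{d_n}(x,\epsilon)\subset B_{\overline{d}_n}(x,\epsilon)$ and $S_n(d,\rho,\epsilon)\le r_n(d,\rho,\epsilon/2)$ for the Bowen-ball covering number, and then quotes \cite[Theorem 1.1]{Katok80}; your SMB argument with a partition of diameter $<\epsilon$ proves the same bound from scratch and is fine. For the lower bound the paper again avoids SMB (so its inequality \eqref{EE-2} holds for every invariant $\rho$, not only ergodic ones): it bounds $H_\rho(\xi_0^{n-1})$ directly by conditioning on $\{W_n,X\setminus W_n\}$ and counting the atoms met by each ball, and instead of a null-boundary partition it shrinks the atoms to disjoint closed sets $B_1,\dots,B_k$ with pairwise distance $b>0$ plus a small remainder $B_{k+1}$, the separation $b$ playing the role of your $\tau$. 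Your SMB route is legitimate for ergodic $\rho$, which is all the theorem asserts.

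The step that needs repair: you set $\tau=\sqrt{\epsilon}$ and simultaneously demand $\rho(U_{\sqrt{\epsilon}})<\epsilon$ so that Lemma \ref{severaltimes} applies to $K=X\setminus U_\tau$. From $\rho(\partial\xi)=0$ you only get $\rho(U_\tau)\to 0$ as $\tau\to 0$ with no rate, and $\rho(U_{\sqrt{\epsilon}})<\epsilon$ can fail for all small $\epsilon$ (e.g.\ $X=\mathbb{T}^1$, $\rho$ Lebesgue, $\xi$ a partition into arcs, where $\rho(U_\tau)\asymp\tau$ so $\rho(U_{\sqrt{\epsilon}})\asymp\sqrt{\epsilon}\gg\epsilon$). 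The fix is to decouple the two scales exactly as the paper does with its $\kappa$ and $b$: fix $\sigma>0$, choose $\tau>0$ with $\rho(U_\tau)<\sigma^2$, and only then restrict to $\epsilon\le\tau\sigma$; Markov gives $\#\{i:d(T^iy_0,T^iy)\ge\tau\}\le 2\sigma n$ and Lemma \ref{severaltimes} gives the $\sigma n$ bound on visits to $U_\tau$, so each doubled ball meets at most $\binom{n}{[3\sigma n]}|\xi|^{3\sigma n}$ atoms and the exponential error tends to $0$ as $\sigma\to 0$. Since $\liminf_n\frac1n\log S_n(d,\rho,\epsilon)$ is monotone in $\epsilon$, proving the bound along such a sequence of $\epsilon$'s suffices for the limit. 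A minor further caveat: the implication ``$T^iy_0\notin U_\tau$ and $d(T^iy_0,T^iy)<\tau$ force the same $\xi$-atom'' is cleanest if $U_\tau$ is taken to be $\{z:B(z,\tau)\not\subseteq\xi(z)\}$ rather than the metric $\tau$-neighbourhood of $\partial\xi$ (balls need not be connected); this set also has measure tending to $0$ when $\rho(\partial\xi)=0$. The passage to the topological statement via the variational principle is the same as in the paper.
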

\begin{proof} First we show for any $\rho\in \mathcal{M}(X,T)$
\begin{equation}\label{EE-1}
\limsup_{n\rightarrow \infty}\frac{1}{n}\log S_n(d,\rho, \epsilon)\le h(T) \text{ for any }\epsilon>0
\end{equation}
and when $\rho$ is ergodic,
\begin{equation}\label{EE-3}
\lim_{\epsilon\rightarrow 0}\limsup_{n\rightarrow \infty}\frac{1}{n}\log S_n(d,\rho, \epsilon)\le h_\rho(T).
\end{equation}

Fix $\epsilon>0$ and  $\rho\in \mathcal{M}(X,T)$. Define $d_n(x,y)=\max\limits_{0\le i\le n-1}d(T^ix,T^iy)$. It is clear that
$$\overline{d}_n(x,y)\le d_n(x,y)\ \text{and}\ B_{d_n}(x,\epsilon)\subset B_{\overline{d}_n}(x,\epsilon).$$

Let
$$r_n(d,\rho,\epsilon)=\min \{ m\in \mathbb{N}:\exists x_1,x_2,\cdots,x_m
\text{ s.t. } \rho\big(\bigcup_{i=1}^m B_{d_n}(x_i,\epsilon)\big)>1-\epsilon\},$$
see \cite[Section 7.2]{Wa} or \cite{Katok80} for the details. Then
there exist a finite subset $F$ of $X$ with $|F|=r_n(d,\rho,\epsilon/2)$ and $K\subset X$ with $\rho(K)>1-\epsilon/2$ such that
for any $x\in K$ there is $y\in F$ with $\overline{d}_n(x,y)\le d_n(x,y)\le \epsilon/2<\epsilon.$ This implies that
$$S_n(d,\rho, \epsilon)\le |F|=r_n(d, \rho, \epsilon/2),$$ and hence
$$\limsup_{n\rightarrow \infty}\frac{1}{n}\log S_n(d,\rho, \epsilon)\le \limsup_{n\rightarrow \infty}\frac{1}{n}\log r_n(d, \rho,\epsilon/2)\le h(T).$$

When $\rho$ is ergodic, by Theorem 1.1 of \cite{Katok80} we have
$$\lim_{\epsilon\rightarrow 0}\limsup_{n\rightarrow \infty}\frac{1}{n}\log S_n(d,\rho, \epsilon)\le \lim_{\epsilon\rightarrow 0}\limsup_{n\rightarrow \infty}\frac{1}{n}\log r_n(d, \rho,\epsilon/2)\le h_\rho(T).$$

Now we prove that for any $\rho\in \mathcal{M}(X,T)$
\begin{equation}\label{EE-2}
h_\rho(T)\le \lim_{\epsilon\rightarrow 0}\liminf_{n\rightarrow \infty}\frac{1}{n}\log S_n(d,\rho, \epsilon).
\end{equation}
Given  a Borel partition $\eta=\{A_1, \ldots, A_k\}$ of $X$ and $0<\delta<1$, we need to show
$$h_\rho(T,\eta)< a+2\delta,$$
where $a=:\lim_{\epsilon\rightarrow 0}\liminf_{n\rightarrow \infty}\frac{1}{n}\log S_n(d,\rho, \epsilon)$.

Take $0<\kappa<\frac{1}{2}$ with $$-2\kappa\log 2\kappa-(1-2\kappa)\log (1-2\kappa)+4\kappa \log (k+1) <\delta.$$
By Lemma 4.15 and Corollary 4.12.1 in \cite{Wa}, it is easy to see that there is a Borel partition $\xi=\{B_1, \ldots, B_k,B_{k+1}\}$ of $X$ associated with $\eta$ such that
$B_i\subset A_i$ is closed for $1\le i\le k$ and $B_{k+1}=X\setminus K$ with $K=:\cup_{i=1}^kB_i$, $\rho(B_{k+1})<\kappa^2$ and
\begin{align}\label{entropy-1}
h_\rho(T,\eta)\le h_\rho(T,\xi)+\delta.
\end{align}
It is clear that $B_i\cap B_j=\emptyset, 1\le i<j\le k$ and $\rho(K)>1-\kappa^2$.
Let $b=\min_{1\le i<j\le k} d(B_i,B_j)$. Then $b>0$.

Let $0<\epsilon<\min\{ 1-2\kappa, \frac{1}{2}b(1-2\kappa)\}$ and $n\in\mathbb{N}$. By the definition, there are $x_1,\ldots, x_{m(n)}\in X$ such that $\rho\big(\bigcup_{i=1}^{m(n)} B_{\overline{d}_n}(x_i,\epsilon)\big)>1-\epsilon$, where
$m(n)=S_n(d,\rho, \epsilon).$ Let $F_n=\bigcup_{i=1}^{m(n)} B_{\overline{d}_n}(x_i,\epsilon).$

For  $x\in X$, let $E(x)=\{i\ge 0: T^ix\in K\}$ and
$ E_n=\{x\in X:\frac{|E(x)\cap [0,n-1]|}{n}\le 1-\kappa\}.$
By Lemma \ref{severaltimes} we have
$\rho(E_n)<\kappa$. Put $W_n=(K\cap F_n)\setminus E_n$. Then
$$\rho(W_n)>1-\kappa^2-\kappa-\epsilon>1-2\kappa-\epsilon$$
and for $z\in W_n$,
\begin{equation*}
\frac{|E(z)\cap [0,n-1]|}{n}> 1-\kappa.
\end{equation*}

%
For a given $1\le i\le m$ we claim
\begin{align}\label{entropy-2}
|\{A\in \xi_0^{n-1}: A\cap B_{\overline{d}_n}(x_i,\epsilon)\cap W_n \not=\emptyset\}|\le C_n^{[nc(\epsilon)]} \cdot (k+1)^{[nc(\epsilon)]},
\end{align}
where $c(\epsilon)=2\frac{\epsilon}{b}+2\kappa$ and $[nc(\epsilon)]$ is the integer part of $nc(\epsilon)$.
To see this, let $x\in A_1\cap B_{\overline{d}_n}(x_i,\epsilon)\cap W_n \not=\emptyset$ and $y\in A_2\cap B_{\overline{d}_n}(x_i,\epsilon)\cap W_n \not=\emptyset$, where $A_1,A_2\in \xi_0^{n-1}$. Then $\overline{d}_n(x,y)<2\epsilon$ and $|E(x)\cap E(y)\cap [0,n-1]|\ge (1-2\kappa) n$. This implies that
$$ |\{0\le i\le n-1: T^ix\in B_{k_i},T^iy\in B_{l_i}, 1\le k_i\not=l_i\le k+1\}|\le [nc(\epsilon)].$$

It remains to show $h_\rho(T,\xi)\le a+\delta$. We know that
\begin{align*}
H_\rho(\xi_0^{n-1})&\le
H_\rho(\xi_0^{n-1}\vee \{ W_n,X\setminus W_n\})\\
&\le \rho(W_n) \log |\{ A\in \xi_0^{n-1}: A \cap W_n\neq \emptyset\}|+\rho(X\setminus W_n)\log |\xi_0^{n-1}|\\
&\hskip0.8cm -\rho(W_n)\log \rho(W_n)-(1-\rho(W_n))\log (1-\rho(W_n))\\
&\le \log |\{ A\in \xi_0^{n-1}: A \cap W_n\neq \emptyset\}|+n\rho(X\setminus W_n)\log (k+1)+\log 2\\
&\le \log \big( \sum_{i=1}^{m(n)} |\{A\in \xi_0^{n-1}: A\cap B_{d_n}(x_i,\epsilon)\cap W_n \not=\emptyset\}|\big)\\
&\hskip0.8cm +(2\kappa+\epsilon)n \log (k+1)+\log 2.
\end{align*}
See \cite[Section 8.1]{Wa} for some inequality in the above estimation.  Combining this with \eqref{entropy-2}, we have
$$H_\rho(\xi_0^{n-1})\le \log \big( S_n(d,\rho,\epsilon) C_n^{[nc(\epsilon)]}(k+1)^{[nc(\epsilon)]}\big)+(2\kappa+\epsilon)n \log (k+1)+\log 2. $$
Thus
\begin{align*}
h_\rho(T,\xi)&=\lim_{n\rightarrow +\infty} \frac{1}{n}H_\rho(\xi_0^{n-1})\\
&\le \liminf_{n\rightarrow +\infty} \frac{1}{n}\log S_n(d,\rho,\epsilon) +\limsup_{n\rightarrow +\infty} \frac{1}{n}\log C_n^{[nc(\epsilon)]}+(c(\epsilon)+2\kappa+\epsilon) \log (k+1)\\
&\le a+\limsup_{n\rightarrow +\infty} \frac{1}{n}\log C_n^{[nc(\epsilon)]}+(c(\epsilon)+2\kappa+\epsilon) \log (k+1).
\end{align*}
By Stirling's formula we have $$\lim_{n\rightarrow +\infty} \frac{1}{n}\log C_n^{[nc(\epsilon)]}=-c(\epsilon)\log c(\epsilon)-(1-c(\epsilon))\log (1-c(\epsilon))$$
Letting $\epsilon\rightarrow 0$  we obtain
$$h_\rho(T,\xi)\le a-2\kappa\log 2\kappa-(1-2\kappa)\log (1-2\kappa)+4\kappa \log (k+1) <a+\delta$$
and
hence $h_\rho(T,\eta)< a+2\delta$ by \eqref{entropy-1}.

So the theorem following from (\ref{EE-1}), (\ref{EE-2}) and the variational principle.
\end{proof}

\begin{rem} By the above theorem  when considering entropy we get the same value either using the balls defined by
$\overline{d}_n$ or using the standard Bowen balls. The advantage to use $\overline{d}_n$ is that it is an isomorphic invariant (see
Proposition \ref{prop-0}) when studying the
complexity.
\end{rem}

\section{Proof of Theorem \ref{main-result1}}\label{section-proof}

In this section we will prove Theorem \ref{main-result1}.
First let us show that Theorem 1.1' implies Theorem \ref{main-result1}. To do this
let $(X,T)$ be a t.d.s. and the measure complexity of $(X,T,\rho)$  be sub-polynomial
for any $\rho\in {\mathcal{M}}(X,T)$. Assume the contrary that the  M\"{o}bius disjointness conjecture does not hold for this t.d.s.
Then there are $x\in X$, $N_1<N_2<\ldots$ and $f\in C(X)$ such that $\lim_{i\rightarrow +\infty} \frac{1}{N_i} \sum_{n=1}^{N_i} \mu(n) f(T^nx)=a$
with $a\not=0$. Without loss of generality assume that $\frac{1}{N_i}\sum_{n=1}^{N_i} \delta_{T^nx}$ weakly$^*$ converges to a $\rho\in  {\mathcal{M}}(X,T)$.
By the assumption the measure complexity of $\rho$ is sub-polynomial. Thus by Theorem 1.1' we have
$\lim_{i\rightarrow +\infty} \frac{1}{N_i} \sum_{n=1}^{N_i} \mu(n) f(T^nx)=0,$ a contradiction.

\medskip

Let $(X,T)$ be a t.d.s. with the metric $d$. Let $x\in X$ and $\{ N_1<N_2<N_3<\cdots\} \subseteq \mathbb{N}$ such that the sequence $\frac{1}{N_i}\sum_{n=1}^{N_i} \delta_{T^nx}$ weakly$^*$ converges to a Borel probability measure $\rho$.
Suppose that the measure complexity of $(X,d,T,\rho)$ is weaker than $U_\tau(n)=n^\tau$ for any $\tau>0$.

Now we  fix a real-valued function $f\in C(X)$. Without loss of generality, we assume that $\max_{x\in X}|f(x)|\le 1$. Fix $\epsilon\in (0,1)$.
To prove Theorem 1.1' it suffices to demonstrate
\begin{align}\label{mineq-1}
\limsup_{i\rightarrow +\infty} |\frac{1}{N_i}\sum_{n=1}^{N_i} \mu(n)f(T^nx)|<8\epsilon.
\end{align}
To this aim we show when $i$ is large enough in Lemma \ref{lem-2} that for some sequence $\{j_n\}$
$$|\frac{1}{N_i}\sum_{n=1}^{N_i} \mu(n)f(T^nx)-\frac{1}{N_i}\sum_{n=1}^{N_i} \Big( \frac{1}{L} \sum_{\ell=0}^{L-1} \mu(n+\ell) f(T^\ell x_{j_n}) \Big) |<5\epsilon.$$
and in Lemma \ref{lem-3} that
$$|\frac{1}{N_i}\sum_{n=1}^{N_i} \Big( \frac{1}{L} \sum_{\ell=0}^{L-1} \mu(n+\ell) f(T^\ell x_{j_n}) \Big) |<3\epsilon.$$
Note that $\{x_{j_n}\}\subset X$ will be defined in the process of the proof.

\medskip
One of the main tools in our proof is an estimate (Lemma \ref{lem-MRT-2} below) developed by
Matom\"{a}ki, Radziwi\l{}\l{} and Tao in \cite{MRT}. To explain
the result we need some preparation.
First we define the following sets $\mathcal{S}$ of natural numbers with typical prime factorization,
which were introduced in \cite{MR} (see also \cite[Definition 2.1]{MRT}).
\begin{de} Let $10<P_1<Q_1\le N$ and $\sqrt{N}\le N_0\le N$ be quantities such that $Q_1\le \exp(\sqrt{\log N_0})$. We then define
$P_j,Q_j$ for $j>1$ by the formula
$$P_j:= \exp(j^{4j}(\log Q_1)^{j-1} \log P_1) \text{ and } Q_j := \exp(j^{4j+2}(\log Q_1)^j).$$
for $j>1$. The intervals $[P_j, Q_j]$ are all disjoint and $P_j,Q_j\to\infty$. Take the largest number $J$ satisfying $Q_J \le \exp(\sqrt{\log N_0})$ and the
set $\mathcal{S}_{P_1,Q_1,N_0,N}$ of all the numbers $1 \le  n \le N$ which have at least one prime factor in the
$[P_j, Q_j]$ for each $1\le j \le J$.
\end{de}

$\mathcal{S}_{P_1,Q_1,N_0,N}$ only misses a small part of $\mathbb N$ when $Q_1$ is much larger than $P_1$.

\begin{lem}\label{lem-MRT-1} \cite[Lemma 2.2]{MRT}  Let $10<P_1<Q_1\le N$ and $\sqrt{N}\le N_0\le N$ such that $Q_1\le \exp(\sqrt{\log N_0})$.
Then, for every large enough $N$,
$$\# \{1\le n\le N: n\not\in \mathcal{S}_{P_1,Q_1,N_0,N}\}\le C \frac{\log P_1}{\log Q_1}\cdot N,$$
where $C>0$ is an absolute constant.
\end{lem}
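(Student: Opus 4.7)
The plan is to decompose the bad set according to which of the intervals $[P_j,Q_j]$ fails to contribute a prime factor, apply a sieve estimate to each, and then exploit the very rapid growth of the $Q_j/P_j$ ratios to sum the geometric series. Concretely, for each $1\le j\le J$ set
$$E_j:=\{1\le n\le N: p\nmid n \text{ for every prime } p\in[P_j,Q_j]\},$$
so that $\{1\le n\le N: n\notin\mathcal{S}_{P_1,Q_1,N_0,N}\}=\bigcup_{j=1}^J E_j$, and by the union bound
$$\#\{1\le n\le N: n\notin\mathcal{S}_{P_1,Q_1,N_0,N}\}\le\sum_{j=1}^J |E_j|.$$

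First I would bound $|E_j|$ by a sieve. Since $Q_J\le\exp(\sqrt{\log N_0})\le\exp(\sqrt{\log N})$ is minuscule compared to $N$, the fundamental lemma of sieve theory (or a Brun/Selberg upper bound sieve) applies and gives
$$|E_j|\le C_1\, N\prod_{P_j\le p\le Q_j}\Bigl(1-\frac{1}{p}\Bigr)$$
for an absolute constant $C_1$. Mertens' theorem then converts the product into a logarithmic ratio:
$$\prod_{P_j\le p\le Q_j}\Bigl(1-\frac{1}{p}\Bigr)=\frac{\log P_j}{\log Q_j}\Bigl(1+O\Bigl(\tfrac{1}{\log P_j}\Bigr)\Bigr)\le C_2\,\frac{\log P_j}{\log Q_j}.$$

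The second step is a direct computation using the recursive definition of $P_j$ and $Q_j$. Since $\log P_j=j^{4j}(\log Q_1)^{j-1}\log P_1$ and $\log Q_j=j^{4j+2}(\log Q_1)^j$, the ratio telescopes neatly into
$$\frac{\log P_j}{\log Q_j}=\frac{1}{j^2}\cdot\frac{\log P_1}{\log Q_1}.$$
Summing the resulting bounds over $1\le j\le J$ yields
$$\sum_{j=1}^J|E_j|\le C_1 C_2\, N\,\frac{\log P_1}{\log Q_1}\sum_{j=1}^{J}\frac{1}{j^{2}}\le C\, N\,\frac{\log P_1}{\log Q_1}$$
with $C=C_1C_2\cdot\pi^2/6$, which is exactly the claimed estimate.

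The main obstacle is the first step: justifying the sieve bound on $|E_j|$ uniformly in $j$. One has to check that the sieve's level-of-distribution condition is met, i.e.\ that the ``support'' of the sieve, namely all squarefree products of primes from $[P_j,Q_j]$ of interest, stays much smaller than $N$. The hypothesis $Q_1\le\exp(\sqrt{\log N_0})$ together with $N_0\ge\sqrt{N}$ is precisely what guarantees $Q_J\le\exp(\sqrt{\log N_0})$, so even the largest sieving primes are of size $N^{o(1)}$; this provides plenty of room for any of the standard sieve inequalities to apply and for the Mertens asymptotic to be valid. Everything else is routine bookkeeping with the explicit formulas for $P_j$ and $Q_j$.
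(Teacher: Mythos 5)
The paper does not prove this lemma at all — it is quoted verbatim from \cite[Lemma 2.2]{MRT} — so there is no internal proof to compare against. Your argument is correct and is essentially the standard proof from the cited source: the union bound over the first interval $[P_j,Q_j]$ contributing no prime factor, the fundamental lemma of the sieve (valid here because $Q_J\le\exp(\sqrt{\log N_0})=N^{o(1)}$ gives an enormous sifting level), Mertens' theorem, and the exact computation $\log P_j/\log Q_j=j^{-2}\log P_1/\log Q_1$ making the sum over $j$ converge.
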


Define a $1$-bounded
multiplicative function to be a multiplicative function $f : \mathbb{N}\rightarrow \mathbb{C}$ such that $|f(n)|\le  1$
for all $n \in \mathbb{N}$. Given two $1$-bounded multiplicative functions $f, g$ and a parameter
$N\ge 1$, we define the distance $\mathbb{D}(f, g;N) \in  [0,+\infty)$ by the formula
$$\mathbb{D}(f,g;N) := \big( \sum_{p\le N} \frac{1-Re(f(p)g(p))}{p} \big)
$$
It is known that this gives a (pseudo-)metric on $1$-bounded multiplicative functions; see \cite[Lemma 3.1]{GS}.
For any $1$-bounded multiplicative function $g$ and real number $N > 1$, let
$$M(g; N) := \inf_{|t|\le N} \mathbb{D}(g, n\mapsto n^{it}; N)^2$$
and
\begin{align*}
M(g;N,Q):&= \inf_{q\le Q; \chi \, (q)} M(g\overline{\chi};N)\\
&=\inf_{|t|\le N; q\le Q; \chi \, (q)} \mathbb{D}(g, n\mapsto \chi(n) n^{it};N)^2,
\end{align*}
where $\chi$ ranges over all Dirichlet characters of modulus $q\le Q$.

Following the discussion about the non-pretentiousness
of $\mu$ after Theorem 1.6 in \cite{MRT}, one has
\begin{align}\label{non-pre-mobious}
\lim_{N\rightarrow +\infty} M(\mu;N,Q)=+\infty
\end{align}
for any given $Q$. The following lemma is a direct application of \cite[Proposition 5.1]{MRT} to the M\"{o}bius function $\mu$.
\begin{lem}\label{lem-MRT-2} Let $10\le L,W,N$ be such that
$$ \log^{20} L \le W \le  \min\{ L^{1/500}, (\log N)^{1/125}\} \text{ and }W \le \exp(M(\mu;N,W)/3).$$
Set
$$\mathcal{S} = \mathcal{S}_{P_1,Q_1,\sqrt{10N},10N}$$
where $P_1:= W^{200}$, $Q_1:=L^{1/2}/W^3$.
Then
\begin{align}\label{MTR-ineq}
\frac{1}{NL^2}\sum_{\ell_1,\ell_2=0}^{L-1} \big|\sum_{n\in [1,N]\cap \mathcal{S}} \mu(n+\ell_1) \mu(n+\ell_2)\big|\le D \frac{1}{W^{1/20}}
\end{align}
where $D>0$ is an absolute constant.
\end{lem}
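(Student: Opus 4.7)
The plan is to deduce Lemma \ref{lem-MRT-2} as a direct specialization of \cite[Proposition 5.1]{MRT} to the Möbius function, as announced in the sentence preceding the statement. The MRT proposition is formulated for an arbitrary pair $g_0, g_1$ of $1$-bounded multiplicative functions, and establishes an inequality of exactly the shape of \eqref{MTR-ineq} for
$$\frac{1}{NL^2}\sum_{\ell_1,\ell_2=0}^{L-1}\Big|\sum_{n\in [1,N]\cap \mathcal{S}} g_0(n+\ell_1)\overline{g_1(n+\ell_2)}\Big|,$$
under size hypotheses on $L, W, N$ identical to those in our lemma and under non-pretentiousness hypotheses $W \le \exp(M(g_j; N, W)/3)$ for $j = 0, 1$.

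The first step is to verify that $\mu$ is a $1$-bounded multiplicative function, which is immediate from \eqref{M-function}. One then sets $g_0 = g_1 = \mu$; since $\mu$ is real-valued, the conjugate in $\overline{g_1(n+\ell_2)}$ disappears and the left-hand side of MRT's bound coincides term-by-term with the left-hand side of \eqref{MTR-ineq}.

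The second step is to match the remaining parameters. The size constraints $\log^{20} L \le W \le \min\{L^{1/500}, (\log N)^{1/125}\}$, the choice $P_1 = W^{200}$, $Q_1 = L^{1/2}/W^3$, and the averaging parameter $N_0 = \sqrt{10N}$ are exactly those prescribed in \cite[Proposition 5.1]{MRT}. The two non-pretentiousness conditions collapse to the single assumption $W \le \exp(M(\mu; N, W)/3)$ already present in our hypotheses. One should also check that $Q_1 \le \exp(\sqrt{\log N_0})$, so that $\mathcal{S}_{P_1, Q_1, N_0, 10N}$ is well-defined in the sense of the earlier definition; but the inequality $\log^{20} L \le W \le (\log N)^{1/125}$ forces $\log L \le W^{1/20} \le (\log N)^{1/2500}$, which makes this trivial for $N$ sufficiently large.

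The main ``obstacle''---and really the only content of the proof---is to locate the precise statement in \cite{MRT} and to track the exponent $1/20$ appearing in $W^{-1/20}$ and the absolute constant $D$ through their derivation; since the MRT proposition is already phrased in a form tailored to applications of this type, this amounts to routine bookkeeping rather than genuine mathematical work, and \eqref{MTR-ineq} then follows at once.
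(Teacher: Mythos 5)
Your proposal is correct and matches the paper's treatment exactly: the paper offers no independent proof of this lemma, stating only that it "is a direct application of \cite[Proposition 5.1]{MRT} to the M\"obius function $\mu$," which is precisely the specialization $g_0=g_1=\mu$ and parameter-matching you carry out. Your additional checks (that $\mu$ is $1$-bounded multiplicative, that the two non-pretentiousness conditions collapse to one, that $P_1<Q_1\le\exp(\sqrt{\log N_0})$ so $\mathcal S$ is well-defined) are exactly the routine verifications implicit in the paper's citation.
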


To establish the following two lemmas, we choose $\epsilon_1>0$ such that $\epsilon_1<\epsilon^2$ and
\begin{equation}\label{2017-3}
|f(y)-f(z)|<\epsilon\  \text{when}\ d(y,z)<\sqrt{\epsilon_1}.
\end{equation}
Put $\delta=\frac{\epsilon}{500(1+\epsilon)(1+10C)}$, where $C$ is the absolute constant in Lemma \ref{lem-MRT-1}. Then
\begin{align}\label{delta-choice}
0<\delta<\frac{1}{500} \text{ and }10C\frac{200\delta}{\frac{1}{2}-3\delta}<\epsilon.
\end{align}
Since the measure complexity of $(X,d,T,\rho)$  is sub-polynomial, there exists $L>10$ such that
\begin{equation}\label{2017-2}
m=S_L(d,\rho,\epsilon_1)<\frac{\epsilon^3 L^{\frac{\delta}{20}}}{2D}\text{ and } W\ge \max\{ 10,\log^{20} L \}.
\end{equation}
where $W:=L^\delta$ and $D$ is the absolute constant in Lemma \ref{lem-MRT-2}. Put
\begin{align}\label{p1q1}
P_1=W^{200}=L^{200\delta}\text{ and } Q_1=L^{\frac{1}{2}}/W^3=L^{\frac{1}{2}-3\delta}.
\end{align}

Then there exist $x_1,x_2,\cdots,x_m\in X$ such that
$$\rho\big(\bigcup_{i=1}^m B_{\overline{d}_L}(x_i,\epsilon_1)\big)>1-\epsilon_1>1-\epsilon^2.$$
Put $U=\bigcup_{i=1}^m B_{\overline{d}_L}(x_i,\epsilon_1)$ and $E=\{n\in \mathbb{N}:T^n x\in U\}$. Then $U$ is open and so
\begin{align}\label{ineq-0}
\liminf_{i\rightarrow +\infty}\frac{1}{N_i}|E\cap [1,N_i]|=\liminf_{i\rightarrow +\infty}\frac{1}{N_i}\sum_{n=1}^{N_i} \delta_{T^nx}(U)\ge \rho(U)>1-\epsilon_1.
\end{align}

For $n\in E$, we choose $j_n\in \{1,2,\cdots,m\}$ such that $T^nx\in B_{\overline{d}_L}(x_{j_n},\epsilon_1)$. Hence for $n\in E$, we have
$\overline{d}_L(T^nx,x_{j_n})<\epsilon_1$, i.e.
$$\frac{1}{L}\sum_{\ell=0}^{L-1} d\big( T^\ell(T^nx),T^\ell(x_{j_n})\big)<\epsilon_1$$
and so we have
\begin{equation}\label{2017-4}
\#\{\ell\in [0,L-1]:d(T^{\ell} (T^n x),T^{\ell}x_{j_n})\ge \sqrt{\epsilon_1}\}<L\sqrt{\epsilon_1}<L\epsilon.
\end{equation}
Thus for $n\in E$
\begin{align}\label{ineq-33}
&\hskip0.5cm \frac{1}{L} \sum_{\ell=0}^{L-1} |f(T^{\ell} (T^n x) )-f(T^\ell x_{j_n})|\nonumber\\
&\le \frac{1}{L} \big(\epsilon\#\{\ell\in [0,L-1]:d(T^{\ell} (T^n x),T^{\ell}x_{j_n})<\sqrt{\epsilon_1}\}\ \ (by\ (\ref{2017-3}))\\
&\hskip2cm+2\#\{\ell\in [0,L-1]:d(T^{\ell} (T^n x),T^{\ell}x_{j_n})\ge \sqrt{\epsilon_1}\}\big)<3\epsilon \nonumber,
\end{align}
by using the inequality (\ref{2017-4}) and the assumption $\max_{x\in X}|f(x)|\le 1$.

For each $n\notin E$, we simply set $j_n=1$.

\begin{lem}\label{lem-2} For all sufficiently large $i$,
$$\Big|\frac{1}{N_i}\sum_{n=1}^{N_i} \mu(n)f(T^nx)-\frac{1}{N_i}\sum_{n=1}^{N_i} \Big( \frac{1}{L} \sum_{\ell=0}^{L-1} \mu(n+\ell) f(T^\ell x_{j_n}) \Big)\Big|<5\epsilon.$$
\end{lem}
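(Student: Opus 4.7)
The plan is to execute a two-stage approximation of $\frac{1}{N_i}\sum_{n=1}^{N_i}\mu(n)f(T^n x)$. In the first stage I would replace each singleton $\mu(n)f(T^n x)$ by the window average $\frac{1}{L}\sum_{\ell=0}^{L-1}\mu(n+\ell)f(T^{n+\ell}x)$; in the second stage I would use the mean complexity hypothesis, via the previously chosen centers $x_{j_n}$, to replace $f(T^{n+\ell}x)=f(T^\ell(T^n x))$ by $f(T^\ell x_{j_n})$.

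For stage one, I would apply a Fubini-type rearrangement: the double average $\frac{1}{L}\sum_{\ell=0}^{L-1}\frac{1}{N_i}\sum_{n=1}^{N_i}\mu(n+\ell)f(T^{n+\ell}x)$ differs from $\frac{1}{N_i}\sum_{n=1}^{N_i}\mu(n)f(T^n x)$ only through the symmetric difference of the shifted intervals $[1+\ell,N_i+\ell]$ with $[1,N_i]$, and since $|\mu|,|f|\le 1$ the total discrepancy is at most $2L/N_i$. Because $L$ is a fixed parameter and $N_i\to\infty$, this boundary error is smaller than any positive constant (in particular $\epsilon$) for $i$ sufficiently large. So after swapping the order of summation I may rewrite the left-hand side of the lemma's inequality, up to additive error $o(1)$, as
\[
\Big|\frac{1}{N_i}\sum_{n=1}^{N_i}\frac{1}{L}\sum_{\ell=0}^{L-1}\mu(n+\ell)\bigl(f(T^{\ell}(T^n x))-f(T^\ell x_{j_n})\bigr)\Big|.
\]

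For stage two, I would partition the outer sum according to whether $n\in E$ or not. For $n\in E$, the bound (\ref{ineq-33}) directly yields
\[
\Big|\frac{1}{L}\sum_{\ell=0}^{L-1}\mu(n+\ell)\bigl(f(T^{\ell}(T^n x))-f(T^\ell x_{j_n})\bigr)\Big|\le \frac{1}{L}\sum_{\ell=0}^{L-1}\bigl|f(T^\ell(T^n x))-f(T^\ell x_{j_n})\bigr|<3\epsilon,
\]
so these indices contribute at most $3\epsilon$ to the average. For $n\notin E$ the bracket is bounded only by the trivial $2$, but by (\ref{ineq-0}) the density of $[1,N_i]\setminus E$ satisfies $\limsup_{i\to\infty}\frac{|[1,N_i]\setminus E|}{N_i}\le\epsilon_1<\epsilon^2$, so the contribution of the bad indices is at most $2\epsilon_1+o(1)<\epsilon$ for $i$ large.

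Adding the three error sources (boundary telescoping, good $n$, bad $n$) gives a total of less than $\epsilon+3\epsilon+\epsilon=5\epsilon$ once $i$ is large enough, which is the claimed bound. There is no genuine obstacle here: the lemma is pure bookkeeping once the Bowen-type approximation (\ref{ineq-33}) and the density estimate (\ref{ineq-0}) have been set up. The only point requiring mild care is making the telescoping rearrangement precise while keeping track of the factor $2$ on the bad set, and verifying that $\epsilon_1<\epsilon^2$ is comfortable enough to absorb the trivial bound there.
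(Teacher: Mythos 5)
Your proposal is correct and follows essentially the same route as the paper: the $2L/N_i$ boundary estimate for the window-averaging step, then the split of $[1,N_i]$ into $E$ and its complement using \eqref{ineq-0} and \eqref{ineq-33}. The only (immaterial) difference is in how the $\epsilon$-budget is allocated among the three error terms; the paper charges $2\epsilon^2+3\epsilon<5\epsilon$ for the second stage and absorbs the $o(1)$ boundary term into the slack.
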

\begin{proof} As $\max_{x\in X}|f(x)|\le 1$, it is not hard to see that
 \begin{align}\label{ineq-1}
 \Big|\frac{1}{N_i}\sum_{n=1}^{N_i} \mu(n)f(T^nx)-\frac{1}{N_i}\sum_{n=1}^{N_i} \Big( \frac{1}{L} \sum_{\ell=0}^{L-1} \mu(n+\ell) f(T^{n+\ell} x) \Big)\Big|
 \le 2\frac{L}{N_i}.
 \end{align}

By \eqref{ineq-0} once $i$ is large enough,
 \begin{align}\label{ineq-2}
\frac{1}{N_i}|E\cap [1,N_i]|>1-\epsilon^2.
\end{align}
Now
\begin{align*}
&\hskip0.5cm \Big|\frac{1}{N_i}\sum_{n=1}^{N_i} \Big( \frac{1}{L} \sum_{\ell=0}^{L-1} \mu(n+\ell) f(T^{n+\ell} x) \Big)- \frac{1}{N_i}\sum_{n=1}^{N_i} \Big( \frac{1}{L} \sum_{\ell=0}^{L-1} \mu(n+\ell) f(T^{\ell} x_{j_n}) \Big)\Big|\\
&\le  \frac{1}{N_i}\sum_{n=1}^{N_i} \frac{1}{L} \sum_{\ell=0}^{L-1} |f(T^{\ell} (T^n x) )-f(T^\ell x_{j_n})|\\
&\le \frac{1}{N_i}\sum_{n\in [1,N_i]\setminus E} \frac{1}{L} \sum_{\ell=0}^{L-1} |f(T^{\ell} (T^n x) )-f(T^\ell x_{j_n})|\\
&\hskip3cm+\frac{1}{N_i}\sum_{n\in E\cap [1,N_i]} \frac{1}{L} \sum_{\ell=0}^{L-1} |f(T^{\ell} (T^n x) )-f(T^\ell x_{j_n})|\\
&<\frac{2}{N_i}|[1,N_i]\setminus E|+\frac{3\epsilon}{N_i}|E\cap [1,N_i]|     \ \ \ \text{ (by \eqref{ineq-33})}\\
&< \frac{2}{N_i}|[1,N_i]\setminus E|+3\epsilon.
\end{align*}

Combining this with \eqref{ineq-2}, when $i$ is large enough,
\begin{align}\label{ineq-4}
\begin{split}
&\hskip0.5cm \Big|\frac{1}{N_i}\sum_{n=1}^{N_i} \Big( \frac{1}{L} \sum_{\ell=0}^{L-1} \mu(n+\ell) f(T^{n+\ell} x) \Big)- \frac{1}{N_i}\sum_{n=1}^{N_i} \Big( \frac{1}{L} \sum_{\ell=0}^{L-1} \mu(n+\ell) f(T^{\ell} x_{j_n}) \Big)\Big| \\
&< 5\epsilon.
\end{split}
\end{align}
So the lemma follows by \eqref{ineq-4} and \eqref{ineq-1} when $i$ large enough as $\frac{L}{N_i}\rightarrow 0$.
\end{proof}

\begin{lem} \label{lem-3} For all sufficiently large $i$,
$$\Big|\frac{1}{N_i}\sum_{n=1}^{N_i} \Big( \frac{1}{L} \sum_{\ell=0}^{L-1} \mu(n+\ell) f(T^\ell x_{j_n}) \Big)\Big|<3\epsilon.$$
\end{lem}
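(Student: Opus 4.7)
The plan is to reduce the $L$-term average over $\ell$ to a bilinear shifted correlation of $\mu$ via Cauchy--Schwarz, and then to invoke the Matom\"aki--Radziwi\l{}\l{}--Tao bound (Lemma \ref{lem-MRT-2}). The crucial leverage is that $j_n$ takes only $m=S_L(d,\rho,\epsilon_1)$ distinct values in $\{1,\dots,m\}$, and by \eqref{2017-2} $m$ has been chosen so small that $mD<\tfrac{1}{2}\epsilon^3 W^{1/20}$, which is exactly the factor we will lose when summing over all $m$ possible centers $x_j$.

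First I would restrict $n$ to the set $\mathcal{S}\cap[1,N_i]$, where $\mathcal{S}=\mathcal{S}_{P_1,Q_1,\sqrt{10N_i},10N_i}$ with $P_1,Q_1,W$ as in \eqref{p1q1}. Lemma \ref{lem-MRT-1} (applied with $N=10N_i$) bounds the complement $\{1\le n\le N_i\}\setminus\mathcal{S}$ by $10C(\log P_1/\log Q_1)N_i=10C\cdot\tfrac{200\delta}{1/2-3\delta}N_i<\epsilon N_i$ by the choice of $\delta$ in \eqref{delta-choice}. Since $|\mu|,|f|\le 1$, this shows the contribution of those $n\notin\mathcal{S}$ to the average in question is less than $\epsilon$.

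For the main term $S_i':=\frac{1}{N_iL}\sum_{n\in\mathcal{S}\cap[1,N_i]}\sum_\ell\mu(n+\ell)f(T^\ell x_{j_n})$, Cauchy--Schwarz yields
\begin{equation*}
|S_i'|^2\le\frac{1}{N_iL^2}\sum_{n\in\mathcal{S}\cap[1,N_i]}\Big|\sum_{\ell=0}^{L-1}\mu(n+\ell)f(T^\ell x_{j_n})\Big|^2.
\end{equation*}
Because the right-hand side summand depends on $n$ only through the finite index $j_n\in\{1,\dots,m\}$, partitioning into the fibers $E_j=\{n\in\mathcal{S}\cap[1,N_i]:j_n=j\}$ and (by positivity of each squared term) enlarging each $E_j$ to the full set $\mathcal{S}\cap[1,N_i]$ produces an upper bound
\begin{equation*}
\frac{1}{N_iL^2}\sum_{j=1}^m\sum_{n\in\mathcal{S}\cap[1,N_i]}\Big|\sum_{\ell=0}^{L-1}\mu(n+\ell)f(T^\ell x_j)\Big|^2.
\end{equation*}
Expanding the inner square, pulling out the $|f(T^{\ell_i}x_j)|\le 1$ factors in absolute value, and applying Lemma \ref{lem-MRT-2} to the remaining correlation $\sum_{\ell_1,\ell_2}|\sum_{n\in\mathcal{S}\cap[1,N_i]}\mu(n+\ell_1)\mu(n+\ell_2)|$ collapses this to at most $mD/W^{1/20}$, which by \eqref{2017-2} is smaller than $\epsilon^3/2<\epsilon^2$. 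Hence $|S_i'|<\epsilon$, and together with the $<\epsilon$ contribution from the bad part the $3\epsilon$ bound follows with room to spare.

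The most delicate step is verifying the hypotheses of Lemma \ref{lem-MRT-2} for our $(L,W,N_i)$: $\log^{20}L\le W$ and $W\le L^{1/500}$ follow directly from \eqref{2017-2} and \eqref{delta-choice}; $W\le(\log N_i)^{1/125}$ is automatic for large $i$ since $W=L^\delta$ is fixed; and $W\le\exp(M(\mu;N_i,W)/3)$ is where the non-pretentiousness statement \eqref{non-pre-mobious} for the M\"obius function is essential, again forcing $i$ to be large. Once these ingredients are in place, combining the estimates of the two preceding paragraphs is a routine arrangement.
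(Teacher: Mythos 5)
Your proof is correct, and it uses exactly the same ingredients as the paper's: the restriction to $\mathcal{S}_i$ controlled by Lemma \ref{lem-MRT-1} together with \eqref{delta-choice}, the verification of the hypotheses of Lemma \ref{lem-MRT-2} via \eqref{non-pre-mobious} for large $i$, and the crucial smallness of $m$ from \eqref{2017-2} to absorb the factor $m$ coming from the union over the centers $x_1,\dots,x_m$. The only difference is in how the second-moment information is deployed on the main term. The paper runs a Chebyshev-type large-values argument: for each center $y=x_j$ it defines the exceptional set $E_i(y)$ of those $n\in[1,N_i]\cap\mathcal{S}_i$ where the short average $\big|\frac{1}{L}\sum_{\ell}\mu(n+\ell)f(T^\ell y)\big|$ exceeds $\epsilon$, bounds $|E_i(y)|/N_i\le D/(\epsilon^2 L^{\delta/20})$ by expanding the square and invoking Lemma \ref{lem-MRT-2}, and then splits $[1,N_i]$ into three classes ($n\notin\mathcal{S}_i$, $n\in\bigcup_j E_i(x_j)$, and the rest), each contributing $<\epsilon$. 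You instead apply Cauchy--Schwarz globally to the average over $n$ and then enlarge each fiber $\{n: j_n=j\}$ to all of $\mathcal{S}_i\cap[1,N_i]$ by positivity, arriving at the bound $mD/W^{1/20}<\epsilon^3/2<\epsilon^2$ for the square, hence $<\epsilon$ for the main term and a total of $2\epsilon$. Both are second-moment arguments and the quantitative content is identical; your version is a slight streamlining (one fewer $\epsilon$ lost, no need to introduce the sets $E_i(y)$), while the paper's exceptional-set formulation makes explicit that the short-interval averages are small for all but a sparse set of $n$, uniformly over the finitely many centers.
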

\begin{proof} By \eqref{non-pre-mobious}, $\lim_{N\rightarrow +\infty} M(\mu;N,L)=+\infty$.
Hence there exists $i_0\in \mathbb{N}$ such that
$$W \le \exp(M(\mu;N_i,W)/3) \text{ and }L \le  \min\{ (\log N_i)^{1/125},\log \sqrt{10N_i}\}$$
for any $i\ge i_0$. It is clear that $$W\le L\le  (\log N_i)^{1/125}\text{ and }Q_1 \le L^{\frac{1}{2}}\le \exp(\sqrt{\log \sqrt{10N_i}})$$
when $i\ge i_0$.

Now fix an $i\ge i_0$. Let $\mathcal{S}_i=\mathcal{S}_{P_1,Q_1,\sqrt{10N_i},10N_i}$.
Then by Lemma \ref{lem-MRT-1} and the inequalities \eqref{delta-choice} and \eqref{p1q1}, for every $i\ge i_0$ (if necessary we enlarge $i_0$),
$$\# \{1\le n\le N_i: n\not\in \mathcal{S}_i \}\le 10C \frac{\log P_1}{\log Q_1}\cdot N_i<\epsilon N_i.$$

First for $y\in X$, let
$$E_i(y)=\{n\in [1,N_i]\cap \mathcal{S}_i: |\frac{1}{L} \sum_{\ell=0}^{L-1} \mu(n+\ell) f(T^\ell y)|\ge \epsilon\}.$$
Then
\begin{align}\label{2017-1}
\frac{|E_i(y)|}{N_i} \epsilon^2 &\le
\frac{1}{N_i}\sum_{n\in [1,N_i]\cap \mathcal{S}_i} \Big| \frac{1}{L} \sum_{\ell=0}^{L-1} \mu(n+\ell) f(T^\ell y) \Big |^2\nonumber\\
&=\frac{1}{N_i L^2}\sum_{n\in [1,N_i]\cap \mathcal{S}_i}  \sum_{\ell_1,\ell_2=0}^{L-1}\mu(n+\ell_1)\mu(n+\ell_2)f(T^{\ell_1}y)f(T^{\ell_2}y)\nonumber\\
&= \frac{1}{N_i L^2}  \sum_{\ell_1,\ell_2=0}^{L-1} \sum_{n\in [1,N_i]\cap \mathcal{S}_i} \mu(n+\ell_1)\mu(n+\ell_2)f(T^{\ell_1}y)f(T^{\ell_2}y)\nonumber\\
&\le \frac{1}{N_i L^2}  \sum_{\ell_1,\ell_2=0}^{L-1} |\sum_{n\in [1,N_i]\cap \mathcal{S}_i} \mu(n+\ell_1)\mu(n+\ell_2)|<\frac{D}{L^{\delta/20}},
\end{align}
where the last inequality follows from Lemma \ref{lem-MRT-2}.
Therefore
\begin{align*}
&\hskip0.5cm |\frac{1}{N_i}\sum_{n=1}^{N_i} \Big( \frac{1}{L} \sum_{\ell=0}^{L-1} \mu(n+\ell) f(T^\ell x_{j_n}) \Big) |\\
&\le \frac{1}{N_i}\big( \sum_{n\in [1,N_i]\cap \mathcal{S}_i} |\frac{1}{L} \sum_{\ell=0}^{L-1} \mu(n+\ell) f(T^\ell x_{j_n}) |+ \sum_{n\in [1,N_i]\setminus \mathcal{S}_i} |\frac{1}{L} \sum_{\ell=0}^{L-1} \mu(n+\ell) f(T^\ell x_{j_n}) |\big)\\
&\le \epsilon+\frac{1}{N_i}\sum_{n\in [1,N_i]\cap \mathcal{S}_i} |\frac{1}{L} \sum_{\ell=0}^{L-1} \mu(n+\ell) f(T^\ell x_{j_n}) |\\
&=\epsilon+\frac{1}{N_i}\big( \sum_{n \in \bigcup \limits_{j=1}^m E_i(x_j)}|\frac{1}{L} \sum_{\ell=0}^{L-1} \mu(n+\ell) f(T^\ell x_{j_n}) |+\\
&\hskip3cm \sum_{n \in [1,N_i]\cap \mathcal{S}_i\setminus \bigcup \limits_{j=1}^m E_i(x_j)}|\frac{1}{L} \sum_{\ell=0}^{L-1} \mu(n+\ell) f(T^\ell x_{j_n}) |\big)
\end{align*}

Since \begin{align*}
\frac{1}{N_i} \sum_{n \in \bigcup \limits_{j=1}^m E_i(x_j)}|\frac{1}{L} \sum_{\ell=0}^{L-1} \mu(n+\ell) f(T^\ell x_{j_n})|
&\le \frac{1}{N_i}\sum_{j=1}^m |E_i(x_j)|\\
&\le m\frac{D}{\epsilon^2L^{\delta/20}}<\frac{\epsilon^3}{2D} L^{\delta/20}\frac{D}{\epsilon^2L^{\delta/20}}
<\epsilon
\end{align*}
by (\ref{2017-1}) and (\ref{2017-2}), we have

\begin{align*}
|\frac{1}{N_i}\sum_{n=1}^{N_i} \Big( \frac{1}{L} \sum_{\ell=0}^{L-1} \mu(n+\ell) f(T^\ell x_{j_n}) \Big) |<2\epsilon+ \frac{1}{N_i} \sum_{n \in [1,N_i]\cap \mathcal{S}_i\setminus \bigcup \limits_{j=1}^m E_i(x_j)}\epsilon <3\epsilon
\end{align*}
as long as $i\ge i_0$ according to the definition of $E_i(y)$.
\end{proof}

\begin{proof}[Proof of Theorem 1.1'] It is clear that \eqref{mineq-1} follows by Lemma \ref{lem-2} and Lemma \ref{lem-3}.
This finishes the proof of Theorem 1.1'.
\end{proof}

\begin{rem} Using the above method it can be shown that if Chowla's conjecture holds for 2-terms, i.e.
$$\lim_{N\rightarrow \infty}\frac{1}{N}\sum_{n=1}^{N}\mu(n+h_1)\mu(n+h_2)=0$$
for any $0\le h_1<h_2\in \mathbb{N}$, and if a t.d.s. $(X,T)$ satisfies the condition that $\liminf_{n\rightarrow \infty} \frac{S_n(d,\rho,\epsilon)}{n}=0$
for any $\epsilon>0$ and $\rho\in \mathcal{M}(X,T)$, then the Sarnak's M\"{o}bius disjointness conjecture holds for $(X,T)$. One of our questions is the following
\begin{ques} Assume that the Sarnak's M\"{o}bius disjointness conjecture holds for any t.d.s. $(X,T)$ with the property that for any $\rho\in \mathcal{M}(X,T)$ there is a
$k\in\mathbb{N}$ such that $\liminf_{n\rightarrow \infty} \frac{S_n(d,\rho,\epsilon)}{n^k}=0$ for any $\epsilon>0$. Is it true that the Sarnak's M\"{o}bius disjointness conjecture holds
for any t.d.s. with zero entropy?
\end{ques}
\end{rem}

\section{Proof  of Theorem \ref{main-result2}}\label{section-proof1}
In this section, we prove Theorem \ref{main-result2}. Theorem \ref{main-result2} is a direct corollary
of the following Proposition \ref{prop-1} and Theorem \ref{main-result1}.

\begin{prop}\label{prop-1} Let $(X,T)$ be a t.d.s. and $\rho\in {\mathcal{M}}(X,T)$. If $\rho$ has discrete spectrum, then the measure complexity of $(X,T,\rho)$ is sub-polynomial.
\end{prop}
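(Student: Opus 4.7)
The plan is to prove a strictly stronger statement: that $\{S_n(d,\rho,\epsilon)\}_{n\ge 1}$ is uniformly bounded in $n$ for every $\epsilon>0$, which of course implies sub-polynomial measure complexity. By Proposition \ref{prop-0} I may fix any compatible metric $d$, assumed bounded by $1$. The driving observation is spectral: if $\phi$ is a bounded eigenfunction with $\phi\circ T=\lambda\phi$ and $|\lambda|=1$, then $|\phi(T^ix)-\phi(T^iy)|=|\phi(x)-\phi(y)|$ for every $i$, so quantities built from eigenfunction coordinates are automatically $T$-invariant and their Birkhoff averages collapse.

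I first reduce from the metric to finitely many scalar coordinates. Fix $\epsilon>0$, pick a $\delta$-dense subset $\{z_1,\ldots,z_K\}\subseteq X$ with $\delta=\epsilon/10$, and set $g_k(x):=d(x,z_k)$; a triangle inequality argument gives $d(x,y)\le\sum_k|g_k(x)-g_k(y)|+2\delta$. Each $g_k$ lies in $L^2(\rho)$, and bounded eigenfunctions have linear span dense in $L^2(\rho)$ even when $\rho$ is not ergodic: for any eigenfunction $\phi$ with unit-modulus eigenvalue one has $|\phi|\circ T=|\phi|$, so each sublevel set $\{|\phi|\le c\}$ is $T$-invariant, and $\phi\cdot\mathbf{1}_{\{|\phi|\le c\}}$ is again an eigenfunction with the same eigenvalue, approximating $\phi$ in $L^2(\rho)$ as $c\to\infty$. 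I therefore approximate $g_k$ by $h_k=\sum_{j}c_{k,j}\phi_j$ with $\phi_j$ bounded eigenfunctions, normalized so that $|\phi_j|\le 1$, and $\|g_k-h_k\|_{L^2(\rho)}<\eta$ for an $\eta$ to be fixed shortly.

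Next I exploit the spectral invariance. Writing $e_k:=g_k-h_k$ and using that $|h_k(T^ix)-h_k(T^iy)|\le\sum_{j}|c_{k,j}||\phi_j(x)-\phi_j(y)|$ independently of $i$ by the eigenfunction identity, the triangle inequality and averaging over $0\le i<n$ yield
\begin{equation*}
\overline{d}_n(x,y)\le\sum_{k,j}|c_{k,j}||\phi_j(x)-\phi_j(y)|+\sum_k\frac{1}{n}\sum_{i=0}^{n-1}\bigl(|e_k|(T^ix)+|e_k|(T^iy)\bigr)+2\delta.
\end{equation*}
The middle term is controlled uniformly in $n$ by the maximal ergodic inequality: for any $t>0$, $\rho\{x:\sup_{n\ge 1}\frac{1}{n}\sum_{i<n}|e_k|(T^ix)>t\}\le\|e_k\|_{L^1(\rho)}/t\le\|e_k\|_{L^2(\rho)}/t<\eta/t$. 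Taking $t=\epsilon/(10K)$ and $\eta$ so small that $K\eta/t<\epsilon/2$ produces a set $A\subseteq X$ with $\rho(A)>1-\epsilon/2$ on which, for every $n$, the middle term is at most $2Kt=\epsilon/5$.

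Finally, a covering argument closes the proof. For $x,y\in A$ the master inequality reduces to $\overline{d}_n(x,y)\le C_{\mathrm{tot}}\max_j|\phi_j(x)-\phi_j(y)|+2\epsilon/5$ with $C_{\mathrm{tot}}:=\sum_{k,j}|c_{k,j}|$. The image $\Phi(A)\subseteq\mathbb{C}^M$ of $\Phi:=(\phi_1,\ldots,\phi_M)$ lies in the bounded set $\{|\zeta_j|\le 1\}$ and so admits a cover by some $P=P(\epsilon)$ balls of radius $\epsilon/(2C_{\mathrm{tot}})$ in the max-norm; picking representatives $x_1,\ldots,x_P\in A$ whose $\Phi$-images lie in these balls gives $A\subseteq\bigcup_i B_{\overline{d}_n}(x_i,\epsilon)$, hence $S_n(d,\rho,\epsilon)\le P$ for every $n$. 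The main obstacle will be the bookkeeping that coordinates $\delta$, $\eta$, $K$, $M$ and $C_{\mathrm{tot}}$ so that $2\delta$, the maximal-function error, and the spectral-approximation error close up below $\epsilon$ uniformly in $n$; the conceptually nontrivial ingredient is the density of bounded eigenfunctions in $L^2(\rho)$, handled by the invariance-of-modulus truncation described above.
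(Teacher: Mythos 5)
Your proposal is correct and proves the same strengthened statement as the paper (uniform boundedness of $S_n(\cdot,\rho,\epsilon)$ in $n$), with the same two driving ideas: density of \emph{bounded} eigenfunctions in $L^2(\rho)$ via truncation, and the collapse of Birkhoff averages coming from $|\phi(T^ix)-\phi(T^iy)|=|\phi(x)-\phi(y)|$. The technical implementation, however, is genuinely different at three points, and in each case your route is arguably leaner. First, the paper replaces $d$ by an auxiliary compatible metric $d'(x,y)=\sum_\ell 2^{-\ell}|g_\ell(x)-g_\ell(y)|/(2\|g_\ell\|+1)$ built from a countable dense subset of $C(X)$ and then invokes Proposition \ref{prop-0}; you instead control the original $d$ directly through the finitely many coordinates $g_k(x)=d(x,z_k)$ of a $\delta$-net, via $d(x,y)\le\sum_k|g_k(x)-g_k(y)|+2\delta$. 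Second, the paper converts the $L^2$-approximation into a pointwise one by Chebyshev, applies Lusin's theorem to make the eigenfunctions continuous on a compact set $A$, and controls the orbit time spent outside $A$ with Lemma \ref{severaltimes}; you absorb all of this into a single application of the maximal ergodic inequality to $|g_k-h_k|$, which has the added advantage of producing one exceptional set valid for all $n$ simultaneously. Third, the paper's final covering uses compactness of $A$ together with the Lusin-continuity of $h_k|_A$; you cover the bounded image $\Phi(A)\subseteq\mathbb{C}^M$ by total boundedness alone, needing no continuity of the eigenfunctions at all.

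Two small points to tighten. The identity $\phi_j(T^ix)=\lambda_j^i\phi_j(x)$ holds only on a $T$-invariant set $X_j$ of full measure, so your master inequality is valid only for $x,y\in\bigcap_jX_j$; you should delete this null set from $A$ before the covering step (the paper does this explicitly with its sets $X_k$), which costs nothing in measure. And, as you yourself flag, the constants as written do not quite close: with balls of radius $\epsilon/(2C_{\mathrm{tot}})$ and representatives rather than centers you get $\overline{d}_n(x,x_i)\le 2C_{\mathrm{tot}}\cdot\epsilon/(2C_{\mathrm{tot}})+2\epsilon/5+2\delta>\epsilon$; shrinking the radius to, say, $\epsilon/(10C_{\mathrm{tot}})$ fixes this. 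Neither issue is conceptual.
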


Indeed, we will show that for any given $\epsilon>0$, $S_n(d',\rho,\epsilon)$ is a bounded sequence for some compatible metric $d'$.

\begin{proof} Since $\rho$ has discrete spectrum, $L^2(\rho)$
is spanned by the set of eigenfunctions of $\rho$. Note that if $f$ is an eigenfunction of $\rho$, then
for any large enough $M\in \mathbb{N}$, $$f_M=\begin{cases} f(x) &\text{ if } |f(x)|\le M\\
0 &\text{ otherwise }
\end{cases}$$ is also an eigenfunction of $\rho$, and $\lim_{M\rightarrow +\infty} \|f_M-f\|_{L^2(\rho)}=0$.
Thus $L^2(\rho)$ is spanned by the set of bounded eigenfunctions of $\rho$.

Let $d$ be a metric on $X$ inducing the topology of $X$. Since $(X,d)$ is a compact metric space, $C(X)$ is separable.
Hence there exist a countable dense subset $\{ g_\ell \}_{\ell=1}^{\infty}$ of $C(X)$. We consider
the new metric
$$d'(x,y)=\sum_{\ell=1}^{+\infty} \frac{|g_\ell(x)-g_\ell(y)|}{2^\ell(2\|g_\ell \|+1)}$$
for any $x,y\in X$, where $\|g_\ell\|=\max_{x\in X} |g_\ell (x)|$. Clearly, $d'$ is a compatible metric with the topology of $X$.

In the following we are going to show that the measure complexity of $(X,d',T,\rho)$ is sub-polynomial.
Fix an $\epsilon>0$. It is sufficient to show that there exists $m=m(\epsilon)$ such that  $S_n(d',\rho,\epsilon)\le m$ for all $n\in \mathbb{N}$.

\medskip
\noindent {\bf Step 1: Define the number $m$}.

We first choose $L\ge \epsilon+2$ such that $\sum_{\ell=L+1}^{+\infty}\frac{1}{2^\ell}<\frac{\epsilon}{3}$.
Since $L^2(\rho)$ is spanned by the set of bounded eigenfunctions of $\rho$, we can find bounded eigenfunctions $\{ h_i\}_{i=1}^K$ of $\rho$
and $a_{\ell,k}\in \mathbb{C}$ for $\ell=1,2,\cdots,L$ and $k=1,2,\cdots,K$ such that
\begin{align}\label{markov-ineq}
\|g_\ell-\sum_{k=1}^K a_{\ell,k} h_k\|_{L^2(\rho)}<(\frac{\epsilon}{6L})^2
\end{align}
for $\ell=1,2,\cdots,L$.

For $k\in \{1,2,\cdots,K\}$, there exist $\lambda_k \in \mathbb{C}$ with $|\lambda_k|=1$, and $X_k\in \mathcal{B}_X$ with $\rho(X_k)=1$,
$TX_k\subseteq X_k$ such that $h_k(Tx)=\lambda_k h_k(x)$ for all $x\in X_k$.
Put
$$Y_\ell=\{x\in X: |g_\ell(x)-\sum_{k=1}^K a_{\ell,k} h_k(x)|\ge \frac{\epsilon}{6L}\}.$$
Then by  \eqref{markov-ineq}, $\rho(Y_\ell)<(\frac{\epsilon}{6L})^2$. Set $X_0=(\bigcap_{k=1}^K X_k)\setminus (\bigcup_{\ell=1}^L Y_\ell)$. Then
$$\rho(X_0)>1-L(\frac{\epsilon}{6L})^2=1-\frac{\epsilon^2}{36L}.$$
By Lusin's Theorem there exists a compact subset $A$ of $X_0$ such that $\rho(A)>1-\frac{\epsilon^2}{36L}$ and  $h_k|_A$ is a continuous function
for $k=1,2,\cdots,K$. Clearly
\begin{align}\label{cha-ineq}
|g_\ell(x)-\sum_{k=1}^K a_{\ell,k} h_k(x)|< \frac{\epsilon}{6L}
\end{align} for $\ell\in \{1,2,\cdots,L\}$ and $x\in A$.
By the continuity of $h_k|_A$, $k=1,2,\cdots,K$, there exists $\delta>0$ such that
when $x,y\in A$ with $d(x,y)<\delta$, one has
\begin{align}\label{cha-ineq1}
\sum_{\ell=1}^L \sum_{k=1}^K |a_{\ell,k}|\cdot |h_k(x)-h_k(y)|<\frac{\epsilon}{6}.
\end{align}

Since $A$ is compact, there exist $x_1,x_2,\cdots,x_m\in A$ such that
$\bigcup_{r=1}^m B_{d}(x_r,\frac{\delta}{3})\supseteq A$.

\medskip
\noindent {\bf Step 2: Show that $S_n(d',\rho,\epsilon) \le m$ for all  $n\in \mathbb{N}$.}

To do this for  $x\in X$, let $E(x)=\{i\ge 0: T^ix\in A\}.$
Then for $n\in \mathbb{N}$, let
$$F_n=\{x\in X:\frac{|E(x)\cap [0,n-1]|}{n}\le 1-\frac{\epsilon}{6}\}.$$
By the same proof of Lemma \ref{severaltimes}
we have $\rho(F_n)<\frac{\epsilon}{6L}$. Put $A_n=A\setminus F_n$. Then
$$\rho(A_n)>1-(\frac{\epsilon^2}{36L}+\frac{\epsilon}{6L})>1-\epsilon$$
and for $z\in A_n$,
\begin{equation}\label{esti-ineq}
\frac{|E(z)\cap [0,n-1]|}{n}> 1-\frac{\epsilon}{6}.
\end{equation}

Let $I_n=\{ r\in [1,m]: B_d(x_r,\frac{\delta}{3})\cap A_n\neq \emptyset\}$.
For $r\in I_n$, we choose $y_r^n\in B_d(x_r,\frac{\delta}{3})\cap A_n$. Then
$$\bigcup_{r\in I_n} \big( B_d(y_r^n,\delta)\cap A_n\big) \supseteq \bigcup_{r\in I_n} \big( B_d(x_r,\frac{\delta}{3})\cap A_n\big)=A_n.$$
Fix $r\in I_n$, for any $x\in B_d(y_r^n,\delta)\cap A_n$,
\begin{align*}
&\hskip0.5cm \overline{d'}_n(x,y_r^n)=\frac{1}{n}\sum_{i=0}^{n-1}d'(T^ix,T^iy_r^n)\\
&\le \frac{1}{n}\big(\sum_{i\in [0,n-1]\cap E(x)\cap E(y_r^n)}d'(T^ix,T^iy_r^n)
+|[0,n-1]\setminus E(x)|+|[0,n-1]\setminus E(y_r^n)|\big)\\
&< \frac{\epsilon}{3}+\frac{1}{n}\sum_{i\in [0,n-1]\cap E(x)\cap E(y_r^n)}d'(T^ix,T^iy_r^n)
\end{align*}
where the last inequality follows from \eqref{esti-ineq}.

Now we are going to estimate $d'(T^ix,T^iy_r^n)$.
For $i\in [0,n-1]\cap E(x)\cap E(y_r^n)$,
\begin{align*}
&\hskip0.5cm d'(T^ix,T^iy_r^n)\le \sum_{\ell=1}^L \frac{|g_\ell(T^ix)-g_\ell(T^iy_r^n)|}{2^\ell (2\|g_\ell\|+1)}+\sum_{\ell=L+1}^\infty \frac{1}{2^\ell}\\
&<\sum_{\ell=1}^L \frac{|g_\ell(T^ix)-g_\ell(T^iy_r^n)|}{2^\ell (2\|g_\ell\|+1)}+\frac{\epsilon}{3}\le\frac{\epsilon}{3}+\sum_{\ell=1}^L \frac{|g_\ell(T^ix)-g_\ell(T^iy_r^n)|}{2^\ell}\\
&<\frac{\epsilon}{3}+\sum_{\ell=1}^L \frac{1}{2^\ell} \big( |g_\ell(T^ix)-\sum_{k=1}^K a_{\ell,k} h_k(T^ix)|+|g_\ell(T^iy_r^n)-\sum_{k=1}^K a_{\ell,k} h_k(T^iy_r^n)|\\
&\hskip2cm+|\sum_{k=1}^K a_{\ell,k} (h_k(T^ix)-h_k(T^iy_r^n)|\big)\\
&<\frac{\epsilon}{3}+\sum_{\ell=1}^L \frac{1}{2^\ell} \big( 2\frac{\epsilon}{6L}+|\sum_{k=1}^K a_{\ell,k} (h_k(T^ix)-h_k(T^iy_r^n)|\big) \ \ \text{ (by \eqref{cha-ineq})}\\
&\le \frac{\epsilon}{3}+\frac{\epsilon}{6}+\sum_{\ell=1}^L \frac{1}{2^\ell} |\sum_{k=1}^K a_{\ell,k} (h_k(T^ix)-h_k(T^iy_r^n)|\\
&\le \frac{\epsilon}{2}+\sum_{\ell=1}^L \sum_{k=1}^K |a_{\ell,k}|\cdot|h_k(x)-h_k(y_r^n)|<\frac{2\epsilon}{3}.
\end{align*}
where the last inequality follows from \eqref{cha-ineq1}.

Combining the above two inequalities, one has
\begin{align*}
\overline{d'}_n(x,y_r^n)< \frac{\epsilon}{3}+\frac{1}{n}\sum_{i\in [0,n-1]\cap E(x)\cap E(y_r^n)}d'(T^ix,T^iy_r^n)
\le \epsilon.
\end{align*}
This implies $B_d(y_r^n,\delta)\cap A_n\subseteq B_{\overline{d'}_n}(y_r^n,\epsilon)$ for $r\in I_n$.

Summing up $\bigcup_{r\in I_n} B_{\overline{d'}_n}(y_r^n,\epsilon)\supseteq \bigcup_{r\in I_n}B_d(y_r^n,\delta)\cap A_n=A_n$ and
$\rho(A_n)>1-\epsilon$. Hence
$S_n(d',\rho,\epsilon)\le |I_n|\le m$.
This finishes the proof of Proposition \ref{prop-1}.
\end{proof}

We think that the following question has an affirmative answer.
\begin{ques}  \label{main-result2-con} Let $(X,T)$ be a t.d.s. and $\rho\in {\mathcal{M}}(X,T)$. Assume that
 $d$ is a metric on $X$ inducing the topology of $X$ such that for each $\epsilon>0$, $S_n(d,\rho, \epsilon)\le m=m(\epsilon)$
 for any $n\in\mathbb{N}$. Is it true that $\rho$ has discrete spectrum?
\end{ques}

\section{proof of Theorem \ref{thm-sub-polynomial}}
Let $T$ be a skew product map on $\mathbb{T}^2$ over an irrational rotation of the circle, i.e.
\begin{align*}
T(x, y) = (x+\alpha, y + h(x)),
\end{align*}
where $h : \mathbb{T}^1 \rightarrow \mathbb{T}^1$ is a homotopically trivial $C^\infty$-function and $\alpha\in [0,1)\setminus \mathbb{Q}$.

Fix a $\tau>0$ and  a $T$-invariant Borel probability measure $\rho$ on $\mathbb{T}^2$.
By Theorem \ref{main-result1} to show Theorem \ref{thm-sub-polynomial} it remains to prove that the measure
complexity of $(\mathbb{T}^2,T,\rho)$ is weaker than $U_\tau(n)=n^\tau$.

Since $h$ is homotopically trivial, $h$ can be realized as a $C^\infty$ function from $\mathbb{T}^1$ to $\mathbb{R}$ and be written as
\begin{align*}
h(x)=\sum_{m\in \mathbb{Z}} \widehat{h}(m)e(mx),
\end{align*}
where $e(\theta)=e^{2\pi i\theta}$, the convergence is uniform and the equality holds pointwise for all $x\in \mathbb{T}^1$.
Moreover, as $h$ is $C^\infty$, we have
\begin{align}\label{eq-esti-Fou}
|\widehat{h}(m)|\ll |m|^{-\tau_1}, \, \forall \, m\in \mathbb{Z}\setminus \{0\}
\end{align}
where $\tau_1=\frac{2}{\tau}+6$.

Consider the continued fraction expansion
$$\alpha=[0;a_1,a_2,\cdots]=\frac{1}{a_1+\frac{1}{a_2+\frac{1}{a_3+\cdots}}}$$
and let $\frac{p_k}{q_k}=[0;a_1,\cdots,a_{k-1}]$ be the corresponding convergents. As $\alpha$ is irrational, the expansion is infinite.
Let $\|\theta\|=\min_{k\in \mathbb{Z}}|\theta-k|$ for $\theta\in \mathbb{R}$. Remark that
$$\|\theta\|\ll |1-e(\theta)|\ll \|\theta\|.$$
\begin{rem}\label{rem-1} The following standard facts can be found in \cite{Khin97}:
\begin{enumerate}
\item $p_1=0,q_1=1$; $p_2=1,q_2=a_1$; $p_{k+1}=a_kp_k+p_{k-1}$ and $q_{k+1}=a_kq_k+q_{k-1}$ for $k\ge 2$;

\item $p_k$ is coprime to $q_k$;

\item $\frac{1}{q_{k+1}+q_k}<\|q_k\alpha\|<\frac{1}{q_{k+1}}$;

\item If $|\alpha-\frac{p}{q}|<\frac{1}{2q^2}$ for $p,q\in \mathbb{Z}$ ($q\neq 0$), then $\frac{p}{q}$
coincides with one of the $\frac{p_k}{q_k}$'s;

\end{enumerate}
\end{rem}
By Remark \ref{rem-1},
$$q_{k+1}\ge q_{k}+q_{k-1}\ge 2q_{k-1}.$$
Thus, $q_k$ grows exponentially:
\begin{align}\label{g-ex}
q_k\gg 2^{\frac{k}{2}} \text{ and }q_{k+j}\gg 2^{\frac{j}{2}}q_k.
\end{align}
Set
$$M=\bigcup_{k\in E}\{\pm m_kq_k:m_k=1,\cdots,a_k \},$$ where $E=:\{k\in\mathbb{N}:q_{k+1}>q_k^{\frac{1}{\tau}+3},\, k>1\}.$
We will show Theorem \ref{thm-sub-polynomial} by considering $M$ is finite or infinite.

\medskip

The following lemma is essentially contained in \cite[Lemma 4.1]{LS}, with slightly finer
estimates.
\begin{lem}\label{lem-M-conjugate} Assume \eqref{eq-esti-Fou} holds,
then $$\sum_{m\not \in M\cup\{0\}} \widehat{h}(m)\frac{1}{e(m\alpha)-1}e(mx)$$
converges uniformly to a continuous function $\psi(x)$.
\end{lem}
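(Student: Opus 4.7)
The plan is to prove that $\psi$ is well defined and continuous by establishing absolute and uniform convergence of the series. Using the elementary bound $|e(\theta) - 1| \gg \|\theta\|$, the claim reduces to
$$\sum_{m \notin M \cup \{0\}} \frac{|\widehat{h}(m)|}{\|m\alpha\|} < \infty.$$
I will partition $\mathbb{Z}\setminus\{0\}$ into the continued-fraction blocks $I_k = \{m : q_k \le |m| < q_{k+1}\}$, estimate the contribution of each $I_k\setminus M$ separately, and sum over $k$ exploiting the exponential growth $q_k \gg 2^{k/2}$ from \eqref{g-ex}.

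For the \emph{tame} blocks (those $k \notin E$, so $q_{k+1} \le q_k^{1/\tau + 3}$), no deletion from $M$ is required: the crude estimate $\|m\alpha\| \ge \|q_k\alpha\| \gg 1/q_{k+1}$ (Remark \ref{rem-1}(3)), combined with the integral bound $\sum_{m \in I_k} |m|^{-\tau_1} \ll q_k^{1 - \tau_1}$ valid since $\tau_1 > 1$, yields a per-block contribution of order $q_{k+1}\, q_k^{1 - \tau_1} \le q_k^{(1/\tau + 3) + 1 - \tau_1} = q_k^{-1/\tau - 2}$, which is summable. The specific choice $\tau_1 = \frac{2}{\tau} + 6$ from \eqref{eq-esti-Fou} is calibrated precisely to make this exponent strictly negative.

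For the \emph{wild} blocks ($k \in E$, so $q_{k+1}$ may be arbitrarily large compared to $q_k$), the set $M$ was designed to remove exactly the multiples $\pm jq_k$, $1\le j\le a_k$, which are the only integers in $I_k$ whose $\|m\alpha\|$ is of the catastrophic size $\sim 1/q_{k+1}$. For the remaining $m \in I_k \setminus M$, I write $|m| = aq_k + b$ with $1 \le a \le a_k$ and $1 \le b < q_k$, and use $|aq_k\alpha \bmod 1| \le a_k\|q_k\alpha\| \le 1/q_k$ to compare $\|m\alpha\|$ with $\|b\alpha\|$ up to an error $\le 1/q_k$. By the three-distance theorem, $\|b\alpha\|\gg 1/q_k$ robustly except for at most two `dangerous' values of $b$ (namely $b=q_{k-1}$ and $b=q_k-q_{k-1}$). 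For the generic $b$, I get $\|m\alpha\|\gg 1/q_k$ and hence a per-block bound $q_k\sum_{m\in I_k} |m|^{-\tau_1}\ll q_k^{2 - \tau_1} = q_k^{-2/\tau - 4}$, summable. For the dangerous $b$'s I exploit the algebraic identity $\|q_{k-1}\alpha\| = a_k\|q_k\alpha\| + \|q_{k+1}\alpha\|$ between consecutive convergents to evaluate $\|(q_{k-1} + aq_k)\alpha\| = \|q_{k+1}\alpha\| + (a_k - a)\|q_k\alpha\|$ exactly, so $1/\|m\alpha\| \ll q_{k+1}/(a_k - a)$; after summing over $a$ the harmonic factor produces an additional $1/a_k\sim q_k/q_{k+1}$ which compensates the apparent loss.

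The main obstacle is exactly this wild-block analysis: one must check that after the explicit removal $M$, the remaining problematic frequencies in each wild block have $\|m\alpha\|$ quantitatively controlled by the precise arithmetic of the convergents $q_{k-1}, q_k, q_{k+1}$, so that the factor $q_{k+1}$ lost in $1/\|m\alpha\|$ is recovered by summing $1/(a_k-a)$ over the $\sim a_k\sim q_{k+1}/q_k$ values of $a$. All four per-block estimates then produce exponents that are strictly negative in $\log q_k$, and the fixed choice $\tau_1=\tfrac{2}{\tau}+6$ is what keeps each margin positive simultaneously.
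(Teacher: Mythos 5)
Your block decomposition, the tame-block estimate (the exponent $(1/\tau+3)+1-\tau_1=-1/\tau-2$ is exactly the margin that $\tau_1=\tfrac2\tau+6$ provides), and the treatment of the progression $b=q_{k-1}$ via the identity $\|q_{k-1}\alpha\|=a_k\|q_k\alpha\|+\|q_{k+1}\alpha\|$ are all sound. The gap is in the step that disposes of the remaining non-multiples of $q_k$ in a wild block. The deduction ``$\|b\alpha\|\gg 1/q_k$ plus an error at most $1/q_k$ gives $\|m\alpha\|\gg 1/q_k$'' does not work: $\|b\alpha\|$ can be as small as $\|q_{k-1}\alpha\|\approx 1/q_k$ while the perturbation $a\|q_k\alpha\|$ can be as large as $a_k\|q_k\alpha\|\approx 1/q_k$, so the two quantities are of the same order and their difference has no lower bound without a sign analysis. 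Concretely, $m=q_{k+1}-q_k=(a_k-1)q_k+q_{k-1}$ is not in $M$ and has $\|m\alpha\|=\|q_k\alpha\|+\|q_{k+1}\alpha\|\asymp 1/q_{k+1}$, far below $1/q_k$ in a wild block; so the ``generic'' bound genuinely fails on the carved-out progression, and one must prove it fails \emph{only} there. The three-distance theorem gives that the number of $b\in[1,q_k)$ with $\|b\alpha\|\le C/q_k$ is $O(C)$, not at most two; moreover of your two named residues, $b=q_k-q_{k-1}$ is actually harmless (the signs of $(q_k-q_{k-1})\alpha-(p_k-p_{k-1})$ and $q_k\alpha-p_k$ agree, so no cancellation occurs), and identifying exactly which residues need the exact-identity treatment requires an Ostrowski-type sign analysis you have not supplied. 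As written, the wild-block case is not closed.

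The paper avoids all of this with one observation you are missing: if $q_k\le|m|<q_{k+1}$ and $q_k\nmid|m|$, then $\|m\alpha\|\ge\frac{1}{2|m|}$. Indeed, if $\|m\alpha\|<\frac{1}{2|m|}$, then by Remark \ref{rem-1}(4) the nearest fraction is a convergent, so $|m|=aq_j$ and $\|m\alpha\|=a\|q_j\alpha\|$ with $j<k$, whence $\|m\alpha\|>a/(q_{j+1}+q_j)\ge 1/(2q_k)\ge 1/(2|m|)$, a contradiction. This holds for every non-multiple of $q_k$ in every block, and with \eqref{eq-esti-Fou} it yields $\sum_{q_k\nmid m}|\widehat h(m)|/\|m\alpha\|\ll\sum_m |m|^{1-\tau_1}<\infty$ with no case analysis on $b$ whatsoever; only the multiples $\pm m_kq_k$ then require the tame/wild dichotomy (controlled by $q_{k+1}\le q_k^{1/\tau+3}$ when $k\notin E$, and removed into $M$ when $k\in E$). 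I recommend replacing your wild-block argument with this.
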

\begin{proof} Since $m \not \in M \cup \{0\}$, we are in one of the following two situations below:


\medskip
(1) For some $k$, $q_k \le |m| < q_{k+1}$ but $q_k\nmid  |m|$. In this case we claim $\|m\alpha\| \ge \frac{1}{2|m|}$.
To show the claim assume the contrary that  $\|m\alpha\| < \frac{1}{2|m|}$. By Remark \ref{rem-1} (4), $|m| = aq_j$ and
$\|m\alpha\| = |m\alpha-ap_j |$ for
some index $j\le k$ and $a\in \mathbb{N}$. Since $q_k \nmid |m|$, $j < k$. Hence we have
\begin{align*}
\|m\alpha\| = |a| \cdot \|q_j\alpha\|>\frac{|a|}{q_{j+1} + q_j}\ge \frac{1}{2q_k}\ge \frac{1}{2|m|},
\end{align*}
a contradiction.

Therefore for any given $k$
\begin{align}\label{ineq-lmc-1}
\begin{split}
&\hskip0.5cm \sum_{ q_k\le |m|<q_{k+1}\atop
q_k\nmid m}| \widehat{h}(m)\frac{1}{e(m\alpha)-1}e(mx)|\\
&\ll \sum_{ q_k\le |m|<q_{k+1}\atop  qk\nmid m} (|m|^{-\tau_1}\cdot |m|\cdot 1)\ll \sum_{ m=q_k}^{q_{k+1}} m^{-(\tau_1-1)} \\
&\ll (q_k^{-(\tau_1-2)}-q_{k+1}^{-(\tau_1-2)}),
\end{split}
\end{align}
since $||m\alpha||\ll |e(m\alpha)-1|$ and $\|m\alpha\| \ge \frac{1}{2|m|}$.

\medskip

(2) $m = \pm m_kq_k$, where $m_k \in \{1,\cdots, a_k\}$ but $q_{k+1}\le   q_k^{\frac{1}{\tau}+3}$. Since
$$m_k\|q_k\alpha \|\le  \frac{a_k}{q_{k+1}}<\frac{1}{q_k},\ \text{by Remark \ref{rem-1}}$$
$\|m\alpha\|$ is given by $m_k \|q_k\alpha\|$ for $k\ge 3$. Note that by Remark \ref{rem-1}  $m_k \|q_k\alpha\|>m_k \frac{1}{q_k+q_{k+1}}.$
Thus, we have for all $k\ge 3$ that
\begin{align}\label{ineq-lmc-2}
\begin{split}
&\hskip0.5cm \sum_{ q_k\le |m|<q_{k+1}\atop
q_k\mid m}| \widehat{h}(m)\frac{1}{e(m\alpha)-1}e(mx)|\\
&\ll 2 \sum_{m_k=1}^{a_k}  ((m_k q_k)^{-\tau_1} \cdot \frac{1}{m_k\cdot \frac{1}{q_k+q_{k+1}} } \cdot 1) \\
&\ll \sum_{m_k=1}^{a_k}  m_k^{-(\tau_1+1)} \cdot q_{k+1}q_k^{-\tau_1}\ll \sum_{m_k=1}^{a_k}  m_k^{-(\tau_1+1)}\cdot q_k^{-(\frac{1}{\tau}+3)} \\
&\ll q_k^{-(\frac{1}{\tau}+3)}.
\end{split}
\end{align}
The last inequality follows from the fact that $\sum_{m_k=1}^{+\infty}  m_k^{-(\tau_1+1)}<\sum_{m_k=1}^{+\infty}  m_k^{-2}<2$.

Now we sum up both estimates \eqref{ineq-lmc-1} and \eqref{ineq-lmc-2} over all $k\ge 3$. Since only finitely
many terms are neglected in doing this, and the estimates are independent
of $x$, to prove the lemma it suffices to know that both the resulting series
are convergent. This is indeed the case, respectively since $\tau_1-2 > 0$ and
$\frac{1}{\tau}+3> 1$. This ends the proof of the lemma.
\end{proof}

Using the above lemma we are able to study the case when $M$ is finite.

\begin{prop}\label{lem-M-finite} Assume \eqref{eq-esti-Fou} holds and $M$ is finite, then the measure complexity of $(\mathbb{T}^2,T,\rho)$ is weaker than $U_\tau(n)=n^\tau$.
\end{prop}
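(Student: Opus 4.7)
The plan is to show that, in the finite-$M$ regime, the skew product $T$ is topologically conjugate to a translation on the compact abelian group $\mathbb{T}^{2}$. Once this is established, the conclusion will follow immediately from Proposition \ref{prop-1} (discrete spectrum implies sub-polynomial complexity) combined with the measurable isomorphism invariance provided by Proposition \ref{prop-0}.

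The first step is a cohomological decomposition of $h$. Split the Fourier series as $h(x)=\widehat{h}(0)+P(x)+Q(x)$, where $P(x):=\sum_{m\in M}\widehat{h}(m)e(mx)$ and $Q(x):=\sum_{m\notin M\cup\{0\}}\widehat{h}(m)e(mx)$. Lemma \ref{lem-M-conjugate} already provides a continuous function $\psi$ with $\psi(x+\alpha)-\psi(x)=Q(x)$. Since $M$ is finite and $e(m\alpha)\neq 1$ for every $m\in M$, I can explicitly solve the cohomological equation for $P$ as well by setting
\[
\phi(x):=\sum_{m\in M}\widehat{h}(m)\frac{e(mx)}{e(m\alpha)-1},
\]
a trigonometric polynomial satisfying $\phi(x+\alpha)-\phi(x)=P(x)$. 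Writing $\Theta:=\phi+\psi$ (continuous on $\mathbb{T}^{1}$) and $\beta:=\widehat{h}(0)$, this yields the identity $h(x)=\beta+\Theta(x+\alpha)-\Theta(x)$.

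Using this identity, I would introduce the homeomorphism $\Xi:\mathbb{T}^{2}\to\mathbb{T}^{2}$, $\Xi(x,y):=(x,y-\Theta(x))$. A direct computation then shows $\Xi\circ T=\widehat{T}\circ\Xi$, where $\widehat{T}(x,y):=(x+\alpha,y+\beta)$ is a rotation on $\mathbb{T}^{2}$. The pushforward $\widehat{\rho}:=\Xi_{\ast}\rho$ is $\widehat{T}$-invariant, and $\Xi$ is a measurable isomorphism between $(\mathbb{T}^{2},\mathcal{B}_{\mathbb{T}^{2}},T,\rho)$ and $(\mathbb{T}^{2},\mathcal{B}_{\mathbb{T}^{2}},\widehat{T},\widehat{\rho})$. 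Since $\widehat{T}$ is equicontinuous, for every $f\in C(\mathbb{T}^{2})$ the orbit $\{f\circ\widehat{T}^{n}\}_{n\in\mathbb{Z}}$ is uniformly equicontinuous and bounded, hence precompact in $C(\mathbb{T}^{2})$ by Arzel\`a--Ascoli and therefore in $L^{2}(\widehat{\rho})$. By density of $C(\mathbb{T}^{2})$, every element of $L^{2}(\widehat{\rho})$ is almost periodic, so $\widehat{\rho}$ has discrete spectrum.

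Applying Proposition \ref{prop-1} to $(\mathbb{T}^{2},\widehat{T},\widehat{\rho})$ gives sub-polynomial measure complexity, and Proposition \ref{prop-0} transfers this through the measurable isomorphism to $(\mathbb{T}^{2},T,\rho)$, yielding in particular a complexity weaker than $U_{\tau}(n)=n^{\tau}$. The only analytically delicate step is the continuous solution of the cohomological equation for the tail $Q$, but this has already been handled by Lemma \ref{lem-M-conjugate}; the contribution from $M$ is just a finite sum thanks to the assumption, so beyond invoking that lemma the argument reduces to checking the conjugation identity and citing the two general propositions.
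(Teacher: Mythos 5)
Your proposal is correct and follows essentially the same route as the paper: since $M$ is finite, the full series $\sum_{m\neq 0}\widehat{h}(m)e(mx)/(e(m\alpha)-1)$ still converges uniformly (the paper calls it $\widetilde{\psi}$, your $\Theta=\phi+\psi$), so $T$ is conjugated via $(x,y)\mapsto(x,y-\Theta(x))$ to the rotation $(x,y)\mapsto(x+\alpha,y+\widehat{h}(0))$, and Proposition \ref{prop-0} transfers the complexity bound back to $(\mathbb{T}^2,T,\rho)$. The only cosmetic difference is the last step: the paper simply notes that for a rotation-invariant metric $S_n(d,\widetilde{\nu},\epsilon)=S_1(d,\widetilde{\nu},\epsilon)<\infty$, whereas you detour through discrete spectrum and Proposition \ref{prop-1}; both are valid.
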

\begin{proof} Since $M$ is finite, the function
$$\sum_{m\neq 0} \widehat{h}(m)\frac{1}{e(m\alpha)-1}e(mx)$$
differs from $\psi$ in Lemma \ref{lem-M-conjugate} by only finitely many terms and also converges uniformly to a continuous function $\widetilde{\psi}(x)$. Let $$\widetilde{S}(x,y)=(x+\alpha,y+\widehat{h}(0))$$
for $(x,y)\in \mathbb{T}^2$.
Put $\widetilde{\pi}:\mathbb{T}^2\rightarrow \mathbb{T}^2$ with
$\widetilde{\pi}(x,y)=(x,y-\widetilde{\psi}(x))$. Let $\widetilde{\nu}=\rho\circ \widetilde{\pi}^{-1}$.
Then $(\mathbb{T}^2,T,\rho)$ is measurably isomorphic to $(\mathbb{T}^2,\widetilde{S},\widetilde{\nu})$ by $\widetilde{\pi}$. Now $\widetilde{S}$ is a rotation of  $\mathbb{T}^2$.
We endow a rotation-invariant metric $d$ on $\mathbb{T}^2$. Then  for $\epsilon >0$,
$$S_n(d,\widetilde{\nu},\epsilon)= S_1(d,\widetilde{\nu},\epsilon)<\infty$$
for all $n\in \mathbb{N}$. Thus the measure complexity of $(\mathbb{T}^2,\widetilde{S},\widetilde{\nu})$ is weaker than $U_\tau(n)=n^\tau$. By Proposition \ref{prop-0}, the measure complexity of $(\mathbb{T}^2,T,\rho)$ is also
weaker than $U_\tau(n)=n^\tau$.
\end{proof}

Let $h_1(x)=\sum_{m \in M\cup\{0\}} \widehat{h}(m)e(mx)$ and define $S:\mathbb{T}^2\rightarrow \mathbb{T}^2$ such that
$$S(x,y)=(x+\alpha,y+h_1(x))$$
for $(x,y)\in \mathbb{T}^2$. Note that $h_1$ is $C^\infty$.
Define $\pi:\mathbb{T}^2\rightarrow \mathbb{T}^2$ with
$\pi(x,y)=(x,y-\psi(x))$. Let $\nu=\rho\circ \pi^{-1}$.
Then $(\mathbb{T}^2,S)$ is a T.D.S., and
\begin{align*}
\pi\circ T(x,y)&=\pi(x+\alpha,y+h(x))=(x+\alpha,y+h(x)-\psi(x+\alpha))\\
&=S(x,y+h(x)-h_1(x)-\psi(x+\alpha))\\
&=S(x,y-\psi(x))=S\circ \pi(x,y)
\end{align*}
for any $(x,y)\in \mathbb{T}^2$ and
$\nu$ is a $S$-invariant Borel probability measure on $\mathbb{T}^2$.
Thus $(\mathbb{T}^2,T,\rho)$ is measurably isomorphic to $(\mathbb{T}^2,S,\nu)$ by $\pi$.

In the following we will consider the case when $M$ is infinite. By Proposition \ref{prop-0} we only need to study the measure complexity
of $(\mathbb{T}^2,S,\nu)$. To this aim, for $n\in \mathbb{N}$, let
$$H_n(x):=\sum_{i=0}^{n-1} h_1(x+i\alpha)$$
for $x\in \mathbb{T}^1$. Clearly
$$S^n(x,y)=(x+n\alpha,y+H_n(x))$$
for $(x,y)\in \mathbb{T}^2$ and $n\ge 0$, where
$H_0(x)\equiv 0$.
\begin{lem} \label{Lem-54} Assume \eqref{eq-esti-Fou} holds and $M$ is infinite, then
$$\max_{x\in \mathbb{T}^1} |H_{q_t}(x)-q_t\widehat{h}(0)|\ll q_t^{-(\frac{1}{\tau}+2)}$$
for  $t\in E:=\{ k\in \mathbb{N}:q_{k+1}>q_k^{\frac{1}{\tau}+3}, k>1\}$.
\end{lem}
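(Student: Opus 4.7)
The plan is to Fourier-expand $H_{q_t}(x)-q_t\widehat h(0)$ and exploit the fact that, because $t\in E$, the iterate $q_t\alpha$ is within $1/q_{t+1}<q_t^{-(1/\tau+3)}$ of an integer. Since $h_1(x)-\widehat h(0)=\sum_{m\in M}\widehat h(m)e(mx)$ and $\sum_{i=0}^{q_t-1}e(im\alpha)=(e(mq_t\alpha)-1)/(e(m\alpha)-1)$, one obtains
\begin{align*}
H_{q_t}(x)-q_t\widehat h(0)=\sum_{m\in M}\widehat h(m)\,e(mx)\,\frac{e(mq_t\alpha)-1}{e(m\alpha)-1}.
\end{align*}
Using $|e(\theta)-1|\asymp\|\theta\|$, the geometric factor is bounded by $\min\bigl(q_t,\|mq_t\alpha\|/\|m\alpha\|\bigr)$. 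For $m=\pm m_kq_k$ with $k\in E$ and $1\le m_k\le a_k$, the standard continued-fraction estimates recalled above give $\|m\alpha\|=m_k\|q_k\alpha\|\asymp m_k/q_{k+1}$, while writing $q_t\alpha=p_t+\theta_t$ with $|\theta_t|<1/q_{t+1}$ yields $\|mq_t\alpha\|\le m_kq_k/q_{t+1}$ whenever this quantity is at most $1/2$.

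I would then split the sum over $m\in M$ according to whether the defining index $k$ is $\le t$ or $>t$. For $k\le t$, combining the two bounds above gives $|(e(mq_t\alpha)-1)/(e(m\alpha)-1)|\ll q_kq_{k+1}/q_{t+1}$ provided $m_kq_k\le q_{t+1}/2$. Plugging in $|\widehat h(m)|\ll m^{-\tau_1}$ with $\tau_1=2/\tau+6$, and using the convergence of $\sum_{m_k\ge 1}m_k^{-\tau_1}$, the contribution from this range is bounded by
\begin{align*}
\sum_{k\le t,\,k\in E}\frac{q_{k+1}}{q_{t+1}}\,q_k^{-(\tau_1-1)}.
\end{align*}
The $k=t$ summand equals $q_t^{-(\tau_1-1)}=q_t^{-(2/\tau+5)}$; for $k<t$ one has $q_{k+1}\le q_t$, so the hypothesis $t\in E$ forces each summand to be at most $q_t^{-(1/\tau+2)}\cdot q_k^{-(\tau_1-1)}$, and the remaining sum $\sum_k q_k^{-(\tau_1-1)}$ is $O(1)$ by the exponential growth of $q_k$. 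The residual case $m_kq_k>q_{t+1}/2$ can only occur at $k=t$, and there the trivial bound $q_t$ on the geometric factor yields a contribution of order $q_{t+1}^{-(\tau_1-1)}$, which is comfortably smaller than $q_t^{-(1/\tau+2)}$.

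In the range $k>t$ I would simply use the trivial bound $q_t$ on the geometric sum:
\begin{align*}
\sum_{k>t,\,k\in E}\sum_{m_k=1}^{a_k}2(m_kq_k)^{-\tau_1}\,q_t\ll q_t\sum_{k>t}q_k^{-\tau_1}\ll q_t\,q_{t+1}^{-\tau_1},
\end{align*}
where $q_{k+j}\gg 2^{j/2}q_k$ is used to sum the tail. Since $q_{t+1}>q_t^{1/\tau+3}$ and $\tau_1=2/\tau+6$, this is $\ll q_t^{1-(1/\tau+3)\tau_1}$, which is dramatically smaller than $q_t^{-(1/\tau+2)}$. Adding the two ranges yields the claim uniformly in $x$. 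The main subtlety is to balance the Diophantine cancellation available for $k\le t$ against the polynomial decay of $\widehat h$ used for $k>t$; the specific choice $\tau_1=2/\tau+6$ paired with the exponent $1/\tau+3$ in the definition of $E$ is exactly what makes both estimates beat the target $q_t^{-(1/\tau+2)}$.
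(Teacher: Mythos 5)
Your argument is correct and follows essentially the same route as the paper's proof: the same Fourier expansion of $H_{q_t}(x)-q_t\widehat h(0)$ with the geometric factor $(e(mq_t\alpha)-1)/(e(m\alpha)-1)$, the same split of $M$ at $k\approx t$, the bound $\|q_t\alpha\|<1/q_{t+1}$ combined with $q_{t+1}>q_t^{1/\tau+3}$ for the low frequencies, and the trivial bound $q_t$ plus the decay $|\widehat h(m)|\ll|m|^{-\tau_1}$ for the high ones. The minor differences (placing $k=t$ in the first rather than the second range, and the unnecessary caveat $m_kq_k\le q_{t+1}/2$, since $\|n\theta\|\le|n|\,\|\theta\|$ holds for all integers $n$) do not affect the argument.
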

\begin{proof} Fix $t\in E$. For $x\in \mathbb{T}^1$,
\begin{align*}
&\hskip0.5cm |H_{q_t}(x)-q_t\widehat{h}(0)|\\
&=|\sum_{k\in E} \sum_{\substack{-a_k\le j\le a_k\\
 j\neq 0}} \widehat{h}(jq_k) e(jq_kx)\cdot \frac{e(jq_kq_t\alpha)-1}{e(jq_k\alpha)-1}|\\
&\ll\sum_{k\in E} \sum_{\substack{-a_k\le j\le a_k\\ j\neq 0}} |jq_k|^{-\tau_1}  \frac{\|jq_kq_t\alpha\|}{\|jq_k\alpha\|} \ll 2\sum_{k\in E} \sum_{j=1}^{a_k} |jq_k|^{-\tau_1}  \frac{\|jq_kq_t\alpha\|}{\|jq_k\alpha\|}\\
&\ll \big(\sum_{k\in E\cap [2,t-1]}\sum_{j=1}^{a_k} |jq_k|^{-\tau_1}  \frac{\|jq_kq_t\alpha\|}{\|jq_k\alpha\|}\big)+\big(\sum_{k\in E\cap [t,+\infty)}\sum_{j=1}^{a_k} |jq_k|^{-\tau_1}  \frac{\|jq_kq_t\alpha\|}{\|jq_k\alpha\|}\big)\\
&\ll (\sum_{k\in E\cap [2,t-1]}\sum_{j=1}^{a_k} |jq_k|^{-\tau_1}  \frac{jq_k\|q_t\alpha\|}{\frac{j}{q_{k+1}+q_k}})+\big(\sum_{k\in E\cap [t,+\infty)}\sum_{j=1}^{a_k} |jq_k|^{-\tau_1}  q_t \big)\\
&\ll 2q_t \|q_t\alpha\|( \sum_{k\in E\cap [2,t-1]}\sum_{j=1}^{a_k} |jq_k|^{-(\tau_1-1)} )+q_t^{-(\tau_1-3)} \big(\sum_{k\in E\cap [t,+\infty)}\sum_{j=1}^{a_k} |jq_k|^{-2}\big)\\
&\ll (2q_t\|q_t\alpha\|+q_t^{-(\tau_1-3)}) (\sum_{n=1}^{+\infty}\frac{1}{n^2})\ll \frac{q_t}{q_{t+1}}+q_t^{-(\tau_1-3)}\\
&\ll q_t^{-(\frac{1}{\tau}+2)},
\end{align*}
where the last inequality follows from the fact $q_{t+1}>q_t^{ \frac{1}{\tau} +3}$.
\end{proof}

Now we are ready to study the case when $M$ is infinite.
\begin{prop}\label{lem-M-infinite} Assume \eqref{eq-esti-Fou} holds and $M$ is infinite, then the measure complexity of $(\mathbb{T}^2,T,\rho)$ is weaker than $U_\tau(n)=n^\tau$.
\end{prop}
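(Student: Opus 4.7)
By the measurable isomorphism $\pi:(\mathbb{T}^2,T,\rho)\to(\mathbb{T}^2,S,\nu)$ constructed just above and Proposition \ref{prop-0}, the task reduces to bounding the measure complexity of $(\mathbb{T}^2,S,\nu)$. Since $E$ is infinite, I would bound $S_{q_t}(d,\nu,\epsilon)$ along a subsequence $t\in E$ and show it is $o(q_t^\tau)$. Using a rotation-invariant product metric on $\mathbb{T}^2$,
\[\overline{d}_{q_t}\bigl((x,y),(x',y')\bigr) = d_{\mathbb{T}^1}(x,x') + \frac{1}{q_t}\sum_{i=0}^{q_t-1}\bigl\|(y-y')+(H_i(x)-H_i(x'))\bigr\|,\]
so everything hinges on the time-average $\frac{1}{q_t}\sum_{i=0}^{q_t-1}|H_i(x)-H_i(x')|$; Lemma \ref{Lem-54} controls this at $i=q_t$ with sup-norm oscillation $\ll q_t^{-(1/\tau+2)}$, and a similar Fourier technique should control the average over $i<q_t$.

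The heart of the argument is a Parseval estimate for
\[I_t := \int_{(\mathbb{T}^1)^2}\frac{1}{q_t}\sum_{i=0}^{q_t-1}|H_i(x)-H_i(x')|^2\,dx\,dx' = 2\!\!\sum_{m\in M\setminus\{0\}}\!\!|\widehat{h}(m)|^2\cdot\frac{1}{q_t}\sum_{i=0}^{q_t-1}\frac{|e(im\alpha)-1|^2}{|e(m\alpha)-1|^2}.\]
The inner $i$-average is bounded by $\min(Cq_t^2,\,C/\|m\alpha\|^2)$. Using $\|jq_k\alpha\|\asymp j/q_{k+1}$ for $m=jq_k\in M$ and splitting over $k<t$, $k=t$, and $k>t$, the decay $|\widehat{h}(m)|\ll |m|^{-\tau_1}$ with $\tau_1=\frac{2}{\tau}+6$ kills the high-frequency tail (paralleling the proof of Lemma \ref{Lem-54}), while the low-frequency contributions become small after passing to a sufficiently sparse subsequence $t_j\in E$, yielding $I_{t_j}\to 0$.

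Given $I_{t_j}\to 0$, Chebyshev and Fubini produce a set $G_{t_j}\subset\mathbb{T}^1$ of Lebesgue measure $>1-\epsilon^2$ such that for each $x\in G_{t_j}$ the set $\bigl\{x':\frac{1}{q_{t_j}}\sum_i|H_i(x)-H_i(x')|\ge\epsilon/4\bigr\}$ has Lebesgue measure $<\epsilon^2$. I would partition $\mathbb{T}^1_x$ into $\lceil 4/\epsilon\rceil$ equal arcs and pick a representative $\tilde x_p\in G_{t_j}$ in each arc; then pick an $(\epsilon/4)$-net $\{y_{p,q}\}_{q=1}^{N_p}$ for the fiber measure $\nu_{\tilde x_p}$ on $\{\tilde x_p\}\times\mathbb{T}^1$, with $N_p=O(\epsilon^{-1})$. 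The $O(\epsilon^{-2})$ $\overline{d}_{q_{t_j}}$-balls of radius $\epsilon$ around the $(\tilde x_p, y_{p,q})$ then cover a $\nu$-measure $>1-\epsilon$ subset of $\mathbb{T}^2$; here one crucially uses that the $x$-marginal of $\nu$ is Lebesgue (since $R_\alpha$ is uniquely ergodic), so Lebesgue-estimates on bad $x$'s transfer to $\nu$-estimates. Hence $S_{q_{t_j}}(d,\nu,\epsilon)=O_\epsilon(1)$ and $\liminf_{t\in E}S_{q_t}/q_t^\tau=0$.

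The main obstacle is the Parseval bound on $I_t$: the resonant denominator $|e(jq_k\alpha)-1|^{-1}\asymp q_{k+1}/j$ can be arbitrarily large for $k\in E$ without any a priori upper bound on $q_{k+1}$, so a uniform pointwise term-by-term estimate may fail at critical resonance levels. The remedy is the precise calibration $\tau_1=\frac{2}{\tau}+6$ together with the defining inequality $q_{k+1}>q_k^{1/\tau+3}$ of $E$, which jointly balance the Fourier decay against the resonance ``explosions''; passing to a subsequence of $t\in E$ absorbs any residual irregularities, exactly in the spirit of the telescoping used in Lemma \ref{Lem-54}.
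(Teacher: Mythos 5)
Your reduction to $(\mathbb{T}^2,S,\nu)$ via Proposition \ref{prop-0} and the idea of working along the subsequence coming from $E$ are the right start, but the analytic core of your plan fails. You want $I_t=2\sum_{m\in M\setminus\{0\}}|\widehat h(m)|^2 A_t(m)\to 0$ along a subsequence, where $A_t(m)=\frac1{q_t}\sum_{i<q_t}|e(im\alpha)-1|^2/|e(m\alpha)-1|^2$. Fix any $k\in E$ with $\widehat h(q_k)\neq 0$. Since $q_k\alpha$ is irrational, $\frac1{q_t}\sum_{i<q_t}|e(iq_k\alpha)-1|^2\to\int_0^1|e(\theta)-1|^2\,d\theta=2$ as $t\to\infty$, so $A_t(q_k)\to 2/|e(q_k\alpha)-1|^2\asymp q_{k+1}^2$, and hence $\liminf_{t\to\infty} I_t\ge 4|\widehat h(q_k)|^2/|e(q_k\alpha)-1|^2>0$. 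All terms in the Parseval identity are nonnegative, so no sparsification of $t$ can rescue this: $I_t\not\to0$ unless $h_1$ is essentially constant. The point is that $h_1$ consists precisely of the resonant frequencies, for which the small denominators $|e(jq_k\alpha)-1|^{-1}\asymp q_{k+1}/j$ are unboundedly large and are not compensated once $i$ ranges over all of $[0,q_t)$ with $t>k$. Concretely, already for $h_1(x)=\widehat h(q_k)e(q_kx)+\overline{\widehat h(q_k)}e(-q_kx)$ the quantity $H_i(x)-i\widehat h(0)$ has amplitude $\asymp|\widehat h(q_k)|\,q_{k+1}$ for a positive proportion of $i<q_t$, so at time scale $q_t$ one needs $x$-resolution roughly $(q_kq_{k+1})^{-1}$, and the number of $\overline d_{q_t}$-balls grows with $t$; the conclusion $S_{q_{t_j}}(d,\nu,\epsilon)=O_\epsilon(1)$ is false in general. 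Your remark that Lemma \ref{Lem-54} ``should'' extend from $i=q_t$ to the average over $i<q_t$ is exactly the step that breaks: that lemma exploits $\|jq_k q_t\alpha\|\le jq_k\|q_t\alpha\|<jq_k/q_{t+1}$, which is special to the time $q_t$ and its multiples.

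The paper's proof avoids this by enlarging the time scale rather than refining the space scale. It sets $n_t=q_t^{[1/\tau]+2}$, writes $i=a_iq_t+b_i$, and telescopes $H_i$ into $a_i$ blocks of length $q_t$ plus a remainder of length $b_i<q_t$; Lemma \ref{Lem-54} makes each block constant in $x$ up to $O(q_t^{-(1/\tau+2)})$, so the accumulated block error is $a_i\cdot O(q_t^{-(1/\tau+2)})=O(1/q_t)$, while the remainder is handled by the Lipschitz bound on $h_1$ with an $x$-grid of mesh $\asymp(Lq_t)^{-1}$. This yields a uniform cover of all of $\mathbb{T}^2$ by $\ll_\epsilon q_t$ balls of $\overline d_{n_t}$-radius $\epsilon$ (no measure-theoretic selection of good fibers is needed, and in particular no use of $\nu$ beyond totality of the cover), and one concludes because $q_t/n_t^\tau\le q_t^{1-\tau([1/\tau]+2)}\le q_t^{-\tau}\to0$. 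You should replace your $L^2$ step by this telescoping at scale $n_t$.
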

\begin{proof} By Proposition \ref{prop-0}, it is sufficient to prove that the measure complexity of $(\mathbb{T}^2,S,\nu)$ is weaker than $U_\tau(n)=n^\tau$.  That is, it is sufficient to prove that
\begin{align}\label{eq-2222}
\liminf_{n\rightarrow +\infty} \frac{S_n(d,\nu,\epsilon)}{n^\tau}=0
\end{align}
for any $\epsilon>0$, where the metric $d$ is defined by
$$d((x_1,y_1),(x_2,y_2)):=\max \{ \|x_1-x_2\|,\|y_1-y_2\|\}$$
for any $(x_1,y_1),(x_2,y_2)\in \mathbb{T}^1\times \mathbb{T}^1$.

Let $\epsilon>0$. By Lemma \ref{Lem-54}, there exists a positive constant $C$ such that
\begin{align*}
\max_{x\in \mathbb{T}^1} |H_{q_t}(x)-q_t\widehat{h}(0)|\le C q_t^{-(\frac{1}{\tau}+2)}
\end{align*}
for  all $t\in E:=\{ k\in \mathbb{N}: q_{k+1}>q_k^{\frac{1}{\tau}+3}, k>1\}$.
This implies that
\begin{align}\label{ineq-control}
\max_{x_1,x_2\in \mathbb{T}^1} |H_{q_t}(x_1)-H_{q_t}(x_2)|\le 2C q_t^{-(\frac{1}{\tau}+2)}
\end{align}
for  all $t\in E$.

Choose $t_0\in \mathbb{N}$ such that $\frac{2C}{q_t}<\frac{\epsilon}{3}$ for $t\ge t_0$.
For $t\in E\cap [t_0,+\infty)$, let $n_t=q_t^{[\frac{1}{\tau}]+2}$, where $[\frac{1}{\tau}]$ is the integer part of $\frac{1}{\tau}$.

Note that $h_1(x)=\sum_{m \in M\cup\{0\}} \widehat{h}(m)e(mx)$ is $C^\infty$. Hence there exists  $L\in \mathbb{N}$ such that
$L\ge \frac{3}{\epsilon}$ and
\begin{align}\label{ineq-control1}
|h_1(x_1)-h_1(x_2)|\le L\|x_1-x_2\|
\end{align}
for any $x_1,x_2\in \mathbb{T}^1$.

Now we are going to show that for $t\in E\cap [t_0,+\infty)$,
\begin{align}\label{esti-333}
S_{n_t}(d,\nu,\epsilon)\le L^2q_t([\frac{3}{\epsilon}]+1).
\end{align}
Given $t\in E\cap [t_0,+\infty)$, let
$$F_t=\{ (\frac{i}{Lq_t([\frac{3}{\epsilon}]+1)},\frac{j}{L}): (i,j)\in \{0,1,\cdots,Lq_t([\frac{3}{\epsilon}]+1)-1\}\times \{0,1,\cdots,L-1\}\}.$$
Then for any $(x,y)\in \mathbb{T}^2$, we can find
$(x_*,y_*)\in F_t$ such that
\begin{align}\label{ineq-control2}
\|x-x_*\|\le \frac{1}{Lq_t([\frac{3}{\epsilon}]+1)} \text{ and }\|y-y_*\|\le \frac{1}{L}\le \frac{\epsilon}{3}.
\end{align}
For $i\in \{0,1,\cdots,n_t-1\}$, write
$$i=a_iq_t+b_i, \text{ where }a_i\in \{0,1,\cdots,q_t^{[\frac{1}{\tau}]+1}-1\}, \, 0\le b_i\le q_t-1.$$
Then
\begin{align*}
&S^i(x,y)=(x+i\alpha,y+\sum_{r=0}^{a_i-1}H_{q_t}(x+rq_t)+\sum_{j=0}^{b_i}h_1(x+(a_iq_t+j)\alpha),\\
&S^i(x_*,y_*)=(x_*+i\alpha,y_*+\sum_{r=0}^{a_i-1}H_{q_t}(x_*+rq_t)+\sum_{j=0}^{b_i}h_1(x_*+(a_iq_t+j)\alpha).
\end{align*}
Thus by \eqref{ineq-control}, \eqref{ineq-control1} and \eqref{ineq-control2}, we have
\begin{align*}
&\hskip0.5cm d(S^i(x,y),S^i(x_*,y_*))\\
&=\max\{ \|x-x_*\|, \|y-y_*+\sum_{r=0}^{a_i-1}\big(H_{q_t}(x+rq_t)-H_{q_t}(x_*+rq_t)\big)\\
&\hskip1.5cm +\sum_{j=0}^{b_i}\big(h_1(x+(a_iq_t+j)\alpha)-h_1(x_*+(a_iq_t+j)\alpha)\big)\|\}\\
&\le \max\{ \|x-x_*\|,\|y-y_*\|+a_i 2C q_t^{-(\frac{1}{\tau}+2)}+(b_i+1) L\|x-x_*\|\}\\
&< \frac{1}{L}+q_t^{[\frac{1}{\tau}]+1} 2C q_t^{-(\frac{1}{\tau}+2)}+Lq_t \frac{1}{Lq_t([\frac{3}{\epsilon}]+1)}\\
&\le \frac{1}{L}+\frac{2C}{q_t}+ \frac{1}{[\frac{3}{\epsilon}]+1}<\epsilon.
\end{align*}
It deduces that
$$\overline{d}_{n_t}((x,y),(x_*,y_*))=\frac{1}{n_t}\sum_{i=0}^{n_t-1}d(S^i(x,y),S^i(x_*,y_*))<\epsilon.$$
That is, $(x,y)\in B_{\overline{d}_{n_t}}((x_*,y_*),\epsilon)\subseteq \bigcup_{(x',y')\in F_t} B_{\overline{d}_{n_t}}((x',y'),\epsilon)$.
This implies $$\bigcup_{(x',y')\in F_t} B_{\overline{d}_{n_t}}((x',y'),\epsilon)=\mathbb{T}^2.$$
Hence $S_{n_t}(d,\nu,\epsilon)\le |F_t|=L^2q_t([\frac{3}{\epsilon}]+1)$. That is, \eqref{esti-333} holds.

\medskip

Next since $M$ is infinite, $E$ is also infinite. Thus using inequality \eqref{esti-333}, we have
\begin{align*}
\liminf_{n\rightarrow +\infty} \frac{S_n(d,\nu,\epsilon)}{n^\tau}&\le \liminf_{\substack{t\rightarrow +\infty\\ t\in E\cap [t_0,+\infty)}} \frac{S_{n_t}(d,\nu,\epsilon)}{n_t^\tau}\\
&\le  \liminf_{\substack{t\rightarrow +\infty\\ t\in E\cap [t_0,+\infty)}} \frac{L^2q_t([\frac{3}{\epsilon}]+1)}{q_t^{\tau([\frac{1}{\tau}]+2)}} \le\liminf_{\substack{t\rightarrow +\infty\\ t\in E\cap [t_0,+\infty)}}\frac{L^2([\frac{3}{\epsilon}]+1)}{q_t^{\tau}}=0.
\end{align*}
This implies that \eqref{eq-2222} holds and hence ends the proof.

\end{proof}

\begin{proof}[Proof of Theorem \ref{thm-sub-polynomial}] It is clear that the measure complexity of $(\mathbb{T}^2,T,\rho)$ is weaker than $U_\tau(n)=n^\tau$ by Proposition \ref{lem-M-finite} and Proposition \ref{lem-M-infinite} for any $\tau>0$. This finishes the proof of Theorem \ref{thm-sub-polynomial}.
\end{proof}

\section{Proof of Theorems \ref{thm-4} and \ref{thm-5}}
In this section, we  prove Theorems \ref{thm-4} and \ref{thm-5} by using Theorem \ref{main-result1}.
\subsection{Proof of Theorem \ref{thm-4}}
Firstly we can deduce Theorem \ref{thm-4}  from Theorem \ref{main-result1} as follows.
\begin{proof}[Proof of Theorem \ref{thm-4}] Now we shall fix a $T$-invariant Borel probability measure $\rho$ on $G\times \mathbb{T}^1$.
By Theorem \ref{main-result1}, it is sufficient to show that   the measure complexity of $(G\times \mathbb{T}^1,T,\rho)$ is sub-polynomial.

Let $\pi_G:G\times \mathbb{T}^1\rightarrow G$ be the coordinate projection. It is clear that $\rho\circ \pi_G^{-1}=m_G$ since $m_G$ is the unique $S_a$-invariant Borel probability of $G$. Since $T$  preserves
a measurable invariant section, there exists a Borel measurable map $\phi : G\rightarrow \mathbb{T}^1$ such that
$$T(g, \phi(g))=(ag,\phi(ag))$$
for $m_G$-a.e. every $g$. That is, $\phi(g)+h(g)=\phi(ag)$ for $m_G$-a.e. every $g$.
Define
$$\begin{cases} \pi(g,y)=(g,y-\phi(g)) \\
S(g,y)=(ag,y)
\end{cases}
\text{ for }(g,y)\in G\times \mathbb{T}^1.$$
Then $\pi:G\times \mathbb{T}^1\rightarrow G\times \mathbb{T}^1$ is an invertible Borel-measurable map and
$\pi^{-1}$ is also a Borel-measurable map.

Let $\nu=\rho \circ \pi^{-1}$ and $S:G\times \mathbb{T}^1\rightarrow G\times \mathbb{T}^1$ with $S(g,y)=(ag,y)$.
Then $(G\times \mathbb{T}^1,S)$ is a T.D.S. and $\nu$ is a Borel probability measure on $G\times \mathbb{T}^1$.
Note that for $m_G$-a.e. $g\in G$, $\pi\circ T(g,y)=S\circ \pi(g,y)$
for all $y\in Y$. Moreover since $\rho\circ \pi_G^{-1}=m_G$, one has
\begin{align}\label{cocycle-1}
\pi\circ T(g,y)=S\circ \pi(g,y)
\end{align}
for $\rho$-a.e. $(g,y)\in G\times \mathbb{T}^1$.

For any $F\in C(G\times \mathbb{T}^1)$,
\begin{align*}
\int_{G\times \mathbb{T}^1}F(S(g,y)) \ d \nu(g,y)&=\int_{G\times \mathbb{T}^1}F\big(S(\pi(g,y)\big) \ d \rho(g,y)\\
&=\int_{G\times \mathbb{T}^1}F\big(\pi(T(g,y))\big) \ d \rho(g,y)
\quad \ \text{ (by \ref{cocycle-1})}\\
&=\int_{G\times \mathbb{T}^1}F\big(\pi(g,y)\big) \ d \rho(g,y)=\int_{G\times \mathbb{T}^1}F(g,y) \ d \nu(g,y).
\end{align*}
This implies $\nu$ is $S$-invariant. Combining this with \eqref{cocycle-1},
$(G\times \mathbb{T}^1,T,\rho)$ is measurable isomorphic to $(G\times \mathbb{T}^1,S,\nu)$ by $\pi$.

Now $S$ is a rotation of the compact metric ableian group $G\times \mathbb{T}^1$.
We endow a rotation-invariant metric $d$ on $G\times \mathbb{T}^1$. Then  for $\epsilon >0$,
$$S_n(d,\nu,\epsilon)= S_1(d,\nu,\epsilon)<\infty$$
for all $n\in \mathbb{N}$. Thus the measure complexity of $(G\times \mathbb{T}^1,d,S,\nu)$ is sub-polynomial. By Proposition \ref{prop-0}, the measure complexity of $(G\times \mathbb{T}^1,T,\rho)$ is also sub-polynomial. Finally, M\"{o}bius disjointness conjecture holds for $(G\times \mathbb{T}^1,T)$
by Theorem \ref{main-result1}.
\end{proof}

\subsection{Proof of Theorem \ref{thm-5}} Theorem \ref{thm-5} is a direct corollary of the following
Proposition \ref{prop-2} and Theorem \ref{main-result2}.

\begin{prop}\label{prop-2} Let $f \in K(\mathbb{Z})$ and $\rho$ be a $T$-invariant Borel probability measure of $(X_f,T)$.
Then $\rho$ has discrete spectrum.
\end{prop}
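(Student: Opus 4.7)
The plan is to show that every $F \in C(X_f)$ is an almost periodic element of $L^2(X_f, \rho)$. Since $C(X_f)$ is dense in $L^2(\rho)$ and the almost periodic functions $H_c$ form a closed subspace, this will force $H_c = L^2(\rho)$, i.e.\ $\rho$ has discrete spectrum. The two structural features of $f \in K(\mathbb{Z})$ I will exploit are: (i) $X_f \subset \ell^\infty(\mathbb{Z})$ is separable in the sup-norm, and (ii) the shift $T$ is a linear isometry of $\ell^\infty(\mathbb{Z})$, so it preserves sup-norm distances on $X_f$.

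Fix $F \in C(X_f)$ and $\epsilon > 0$. Since $F$ is weak*-continuous on the weak*-compact space $X_f$ it is uniformly weak*-continuous; because the sup-norm topology on $X_f$ is finer than the weak* topology, $F$ is then uniformly sup-norm-continuous, so there is $\delta > 0$ with $\|g - g'\|_\infty < \delta \Rightarrow |F(g) - F(g')| < \epsilon$. By (i) I would partition $X_f$ into countably many Borel pieces $\{B_i\}$ of sup-norm-diameter less than $\delta$, choose representatives $g_i \in B_i$, and pick $N$ with $\rho(\bigcup_{i > N} B_i) < \epsilon$.

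For each $n \in \mathbb{Z}$ set $S_n := \sum_{i=1}^N F(T^n g_i) \mathbf{1}_{B_i} \in L^2(\rho)$. For $g \in B_i$ with $i \le N$, property (ii) gives $\|T^n g - T^n g_i\|_\infty = \|g - g_i\|_\infty < \delta$, hence $|F(T^n g) - F(T^n g_i)| < \epsilon$; on the tail the trivial bound $|F \circ T^n| \le \|F\|_\infty$ suffices, so
\[
\|F \circ T^n - S_n\|_{L^2(\rho)}^2 \;\le\; \epsilon^2 + \|F\|_\infty^2 \, \epsilon,
\]
uniformly in $n$. Every $S_n$ lies in the finite-dimensional subspace $V_N := \mathrm{span}\{\mathbf{1}_{B_i} : 1 \le i \le N\} \subset L^2(\rho)$ and is bounded in sup-norm by $\|F\|_\infty$, so $\{S_n\}_{n \in \mathbb{Z}}$ is precompact in $L^2(\rho)$. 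Letting $\epsilon \to 0$ and using a diagonal argument, $\{F \circ T^n\}_{n \in \mathbb{Z}}$ is totally bounded in $L^2(\rho)$, i.e.\ $F$ is almost periodic.

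The main point---more an essential observation than a real obstacle---is that the shift acts as a sup-norm isometry on $\ell^\infty(\mathbb{Z})$: without this rigidity a Borel partition of small sup-norm-diameter need not remain small under iterates of $T$, and norm-separability of $X_f$ alone would give no precompactness statement uniform along the $T^n$-orbit. Once almost periodicity of every $F \in C(X_f)$ is established, density of $C(X_f)$ in $L^2(\rho)$ (via Lusin's theorem, $X_f$ being compact metrizable in the weak* topology) forces $H_c = L^2(\rho)$; combined with Theorem \ref{main-result2} this will yield Theorem \ref{thm-5}.
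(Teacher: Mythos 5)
Your proposal is correct, but it takes a genuinely different route from the paper's. The paper fixes an arbitrary $g\in L^\infty(\rho)$ and proves sequential compactness of $\mathrm{cl}\{U^ng\}$ directly: given $h_i$ approximated by $U^{n_i}g$, it uses the norm-separability of $X_f$ together with the isometry property of the shift to extract a subsequence $n_{i_k}$ along which $T^{n_{i_k}}x$ converges (pointwise, i.e.\ in the topology of $X_f$) for \emph{every} $x\in X_f$ to a Borel map $p$, and then identifies the $L^2$-limit of $U^{n_{i_k}}g$ as $g\circ p$. You instead prove total boundedness of the orbit $\{F\circ T^n\}$ of a continuous $F$ by exhibiting, for each $\epsilon$, a single bounded subset $\{S_n\}$ of a fixed finite-dimensional subspace that approximates the whole orbit uniformly in $n$, and then pass to all of $L^2(\rho)$ by density of $C(X_f)$ and closedness of $H_c$. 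Both arguments run on exactly the two facts you isolate (norm-separability of $X_f$ and the shift being a sup-norm isometry), but your finite-partition construction is more quantitative and elementary; in particular it sidesteps a delicate point in the paper's argument, namely why $g(T^{n_{i_k}}x)\to g(p(x))$ $\rho$-a.e.\ for a merely bounded measurable $g$ (the paper defers this to Theorem 5.2 of the cited work of Huang), at the cost of the extra density step. The paper's version, in exchange, produces the limit operators $p$ explicitly, which connects the statement to the enveloping-semigroup picture behind tame systems. Your argument is also structurally parallel to the paper's own Proposition \ref{prop-1}: it effectively shows $S_n(d',\rho,\epsilon)$ is bounded in $n$ for a suitable metric.

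One step deserves a more careful justification, though it is not a gap in substance: ``the sup-norm topology on $X_f$ is finer than the weak$^*$ topology, hence $F$ is uniformly sup-norm continuous'' is not a valid inference from a comparison of topologies alone, since uniform continuity depends on the uniformities rather than the topologies. It is nevertheless true here: on the bounded set $X_f$ the weak$^*$ topology is that of coordinatewise convergence, metrized by $d_{w^*}(x,y)=\sum_{n}2^{-|n|}\min\{|x(n)-y(n)|,1\}$, and this metric satisfies $d_{w^*}(x,y)\le 3\,\|x-y\|_{\ell^\infty}$; so the uniform $d_{w^*}$-continuity of $F$ (automatic on a compact metric space) transfers to the sup-norm. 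With that supplied, the remaining steps check out: sup-norm balls are countable intersections and unions of coordinate conditions, hence weak$^*$-Borel, so your small-diameter partition exists; and the final ``$\epsilon\to0$'' step is just the standard fact that a set uniformly approximable by totally bounded sets is totally bounded (no diagonal argument is actually needed).
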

\begin{proof} We follow the arguments in the proof of Theorem 5.2 in \cite{H}. To show that $(X_f,\mathcal{B}_{X_f},\rho, T )$ has discrete spectrum, it is sufficient to show that for any $g\in L^\infty (X_f,\mathcal{B}_{X_f},\rho)$, $\text{cl}\{U^n g : n\in \mathbb{Z}\}$ is compact
in $(L^2(\rho),\|\cdot\|_{L^2(\rho)})$, where $\|u\|_{L^2(\rho)}=(\int_{X_f}|u|^2 d \rho)^{\frac{1}{2}}$ for $u\in L^2(\rho)$.

Let $g\in L^\infty (X_f,\mathcal{B}_{X_f},\rho)$.
For any fixed sequence $\{h_i \}_{i\in \mathbb{N}}\subseteq \text{cl}\{U^n g : n\in \mathbb{Z}\}$,
pick $n_i\in \mathbb{Z}$ such that $\|U^{n_i} g-h_i\|_{L^2(\rho)}<\frac{1}{2i}$  for each $i\in \mathbb{N}$.
Since $f\in K(\mathbb{Z})$, $X_f$ is separable in the norm topology of $\ell^\infty(\mathbb{Z})$.
Thus there exist $x_m=(x_m(n))_{n\in \mathbb{Z}}\in X_f$, $m=1,2,\cdots$, such that for each $x=(x(n))_{n\in \mathbb{Z}} \in X_f$
\begin{equation}\label{separable}
\inf_{m\in \mathbb{N}} \|x_m-x\|_{\ell^\infty(\mathbb{Z})}=0,
\end{equation}
where $ \|x_m-x\|_{\ell^\infty(\mathbb{Z})}=\sup_{n\in \mathbb{Z}}|x_m(n)-x(n)|$.

Now we can find  a subsequence $i_1 < i_2 < \cdots$ of natural numbers such that
$\lim_{k\rightarrow +\infty} T^{n_{i_k}} (x_m)$ exists for each $m\in \mathbb{N}$.
By \eqref{separable} and the fact $$\|Ty-Tz\|_{\ell^\infty(\mathbb{Z})}=\|y-z\|_{\ell^\infty(\mathbb{Z})}$$
for any $y,z\in \ell^\infty(\mathbb{Z})$, it is not hard to see that
$\lim_{k\rightarrow +\infty} T^{n_{i_k}} (x)$ exists for each $x\in X_f$.

Define $p(x)=\lim_{k\rightarrow +\infty} T^{n_{i_k}} (x)$ for $x\in X_f$.
Clearly, $p$ is a Borel map from $X_f$ to $X_f$.
Let $h= g \circ p$. Then, $h\in  L^\infty (X_f,\mathcal{B}_{X_f},\rho)$. Since $\lim_{k\rightarrow +\infty} g (T^{ n_{i_k}} x) = h(x)$ for $\rho$-
almost every $x$ and $g, h\in L^\infty (X_f,\mathcal{B}_{X_f},\rho)$, it is not hard to see that $$\lim_{k\rightarrow +\infty} U^{n_{i_k}} g = h$$
in $(L^2(\rho),\|\cdot\|_{L^2(\rho)})$ and $h\in \text{cl}\{U^n g : n\in \mathbb{Z}\}$. Moreover,
\begin{align*}
&\hskip0.5cm \lim_{k\rightarrow +\infty} \|h_{i_k}- h\|_{L^2(\rho)}\\
&\le  \lim_{k\rightarrow +\infty} \big( \|h_{i_k}- U^{n_{i_k}} g\|_{L^2(\rho)}+\|U^{n_{i_k}} g- h\|_{L^2(\rho)}\big) \\
&�� \lim_{k\rightarrow +\infty} \big( \frac{1}{2i_k}+\|U^{n_{i_k}} g- h\|_{L^2(\rho)} \big)\\
&=0.
\end{align*}
That is, $\lim_{k\rightarrow +\infty} h_{i_k}=h$
in $(L^2(\rho),\|\cdot\|_{L^2(\rho)})$. This implies that the complete metric
space $\text{cl}\{U^n g : n\in \mathbb{Z}\}$ is sequential compact. Hence
$\text{cl}\{U^n g : n\in \mathbb{Z}\}$ is compact, since for a metric space, sequential compactness is equivalent
to compactness.
\end{proof}


\begin{thebibliography}{SSS}

\bibitem{AKLR} E. H. El Abdalaoui, J. Kulaga-Przymus, M. Lemanczyk, and T. de la
Rue, The Chowla and the Sarnak conjectures from ergodic theory point of
view, Discrete Contin. Dyn. Syst.  {\bf 37} (2017),  no.6, 2899-2944.

\bibitem{ALR} E. H. El Abdalaoui, M. Lema\'{n}czyk, and T. de la Rue, On spectral disjointness
of powers for rank-one transformations and M\"{o}bius orthogonality, J. Funct. Anal. {\bf 266} (2014), no. 1, 284-317.

\bibitem{ALR1} E. H. El Abdalaoui, M. Lema\'{n}czyk, and T. de la Rue, Automorphisms
with quasi-discrete spectrum, multiplicative functions and average orthogonality
along short intervals, Int. Math. Res. Not. IMRN (2016), to appear, arXiv:1507.04132v1.

\bibitem{BFT} J. Bourgain, D. H. Fremlin and M. Talagrand, Pointwise compact sets of Baire-measurable functions,
Amer. J. Math. {\bf 100} (1978), 845-886.

\bibitem{B1} J. Bourgain, M\"{o}bius-Walsh correlation bounds and an estimate of Mauduit
and Rivat, J. Anal. Math. {\bf 119} (2013), 147-163.

\bibitem{B2} J. Bourgain, On the correlation of the M\"{o}bius function with rank-one
systems, J. Anal. Math. {\bf 120} (2013), 105-130.

\bibitem{BSZ} J. Bourgain, P. Sarnak, and T. Ziegler, Disjointness of M\"{o}bius from horocycle
flows, From Fourier analysis and number theory to Radon transforms and geometry,
Dev. Math., vol. 28, Springer, New York, 2013, pp. 67-83.

\bibitem{DG} T. Downarowicz and E. Glasner, Isomorphic extensions and applications, Topol. Methods Nonlinear Anal.  {\bf 48} (2016),  no.1, 321-338.
.

\bibitem{DK} T. Downarowicz and S. Kasjan, Odometers and Toeplitz systems revisited
in the context of Sarnak's conjecture, Studia Math. {\bf 229} (2015), no. 1,
45-72.

\bibitem{E} R. Ellis, A semigroup associated with a transformation group, Trans. Amer. Math. Soc. {\bf 94} (1960),
272-281.

\bibitem{FJ} A. Fan and Y. Jiang, Oscillating sequences, minimal mean attractability
and minimal mean-Lyapunov-stability, Ergodic Theory Dynam. Systems (2017), to appear.,
arXiv:1511.05022v1.

\bibitem{Fer} S. Ferenczi, Measure-theoretic complexity of ergodic systems, Israel J. Math. {\bf 100} (1997), no.1, 189-207.

\bibitem{F} H. Furstenberg, Strict ergodicity and transformation of the torus, Amer. J.
Math. {\bf 83} (1961), 573-601.

\bibitem{Gl} E. Glasner, On tame enveloping semigroups, Colloq. Math. 105 (2006), 283-295.

\bibitem{GM} E. Glasner and M. Megrelishvili, Linear representations of hereditarily non-sensitive dynamical systems,
Colloq. Math. {\bf 104}(2) (2006), 223-283.

\bibitem{G} B. Green, On (not) computing the M\"{o}bius function using bounded depth
circuits, Combin. Probab. Comput. {\bf 21} (2012), no. 6, 942-951.

\bibitem{GS} Andrew Granville and K. Soundararajan, Large character sums: pretentious characters and the
P\'olya-Vinogradov theorem, J. Amer. Math. Soc. {\bf 20} (2007), 357-384.

\bibitem{GT} B. Green and T. Tao, The M\"{o}bius function is strongly orthogonal to nilsequences,
Ann. of Math. (2) {\bf 175} (2012), no. 2, 541-566.

\bibitem{H} W. Huang, Tame systems and scrambled pairs under an
abelian group action. Ergodic Theory Dynam. Systems {\bf 26} (2006), no. 5, 1549-1567.

\bibitem{HLSY03} W. Huang,  S. Li, S. Shao, and X. Ye,
 Null systems and sequence entropy pairs, Ergod. Th. and Dynam.
Sys., {\bf 23}(2003), 1505-1523.

\bibitem{HLSY} W. Huang, Z. Lian, S. Shao, and X. Ye, Sequences from zero entropy
noncommutative toral automorphisms and Sarnak Conjecture,  J. Differential Equations {\bf 263} (2017), no.1, 779-810.

\bibitem{HWZ} W. Huang, Z. Wang and G. Zhang, M\"{o}bius disjointness for topological model of any ergodic system with discrete spectrum, preprint (2016), arxiv:1608.08289v2.

\bibitem{Katok80} A. Katok, Lyapunov exponents, entropy and the periodic orbits for diffeomorphisms, Inst. Hautes ¨¦tudes Sci. Publ. Math. No. {\bf 51} (1980), 137-173.

\bibitem{KL} D. Kerr and H. Li, Dynamical entropy in Banach spaces, Invent. Math. {\bf 162}(3) (2005), 649-686.

\bibitem{Khin97} A. Ya. Khinchin, Continued fractions, Translated from the third (1961) Russian
edition, Dover Publications, Inc., Mineola, NY, 1997. With a preface by
B. V. Gnedenko; Reprint of the 1964 translation.

\bibitem{K} A. K\"{o}hler, Enveloping semigroups for flows, Proc. Roy. Irish Acad. Sect. A {\bf 95}(2) (1995), 179-191.

\bibitem{KPL} J. Ku{\l}aga-Przymus and M. Lema\'{n}czyk, The M\"{o}bius function and continuous
extensions of rotations, Monatsh. Math. {\bf 178} (2015), no. 4, 553-582.

\bibitem{LS} J. Liu and P. Sarnak, The M\"{o}bius function and distal fows, Duke Math. J. {\bf 164} (2015), No. 7, 1353-1399.

\bibitem{MMR} B. Martin, C. Mauduit, and J. Rivat, Theoreme des nombres premiers
pour les fonctions digitales, Acta Arith. {\bf 165} (2014), no. 1, 11-45 (French).

\bibitem{MRT} K. Matom\"{a}ki, M. Radziwi{\l}{\l}, and T. Tao, An averaged form of Chowla's
conjecture, Algebra Number Theory {\bf 9} (2015), no. 9, 2167-2196.

\bibitem{MR}  K. Matom\"{a}ki, M. Radziwi\l{}\l{},  Multiplicative functions in short intervals, Ann. of Math. {\bf 183} (2016), no. 3,  1015-1056.


\bibitem{P} R. Peckner, M\"{o}bius disjointness for homogeneous dynamics, preprint
(2015), arXiv:1506.07778v1.

\bibitem{Sar} P. Sarnak, Three lectures on the M\"{o}bius function, randomness and dynamics, lecture notes, IAS (2009).

\bibitem{Sar1} P. Sarnak, M\"{o}bius randomness and dynamics, Not. S. Afr. Math. Soc. {\bf 43} (2012), no. 2, 89-97.

\bibitem{SU} P. Sarnak and A. Ubis, The horocycle flow at prime times, J. Math. Pures Appl. (9) {\bf 103} (2015), no. 2, 575-618.

\bibitem{Tao} T. Tao, Personal communication.

\bibitem{V} W.A. Veech, A fixed point theorem-free approach to weak
almost periodicity, Trans. Amer. Math. Soc. {\bf 177} (1973), 353-362.

\bibitem{V1} W.A. Veech, M\"{o}bius orthogonality for generalized Morse-Kakutani flows,
Amer. J. Math. (2016), to appear.

\bibitem{Wa} P. Walters, An introduction to ergodic theory, Graduate Texts in Mathematics,
vol. 79, Springer-Verlag, New York-Berlin, 1982.

\bibitem{W1} Z. Wang, M\"{o}bius disjointness for analytic skew products, Invent. Math. (2016),  to appear,
arXiv:1509.03183v2.


\bibitem{Wei} F. Wei, Entropy of arithmetic functions and Sarnak's M\"{o}bius disjointness
conjecture, 2016. Thesis (Ph.D.)-The University of Chinese Academy of
Sciences.

\end{thebibliography}
\end{document}